\def\fps@figure{htbp}
\newtheorem{theorem}{Theorem}
\newtheorem{conjecture}[theorem]{Conjecture}
\newtheorem{lemma}[theorem]{Lemma}
\newtheorem{corollary}[theorem]{Corollary}
\newtheorem{lcorollary}[theorem]{Corollary}
\theoremstyle{definition}
\newtheorem{definition}[theorem]{Definition}
\newcommand\uhp{\mathord{\upharpoonright}}
\newcommand{\GassBox}{\ensuremath{\Box}}
\newcommand{\GassDiamond}{\Diamond}
\newcommand{\GassLeft}{\ensuremath{\Leftarrow}}
\newcommand{\GassRight}{\ensuremath{\Rightarrow}}
\newcommand{\GassCircli}{^\circ}
\newcommand{\GassT}{\ensuremath{\mathcal T}}
\DeclareMathOperator{\Glp}{G3lp}
\DeclareMathOperator{\Gs}{G3s}
\DeclareMathOperator{\Cut}{Cut}
\DeclareMathOperator{\sub}{sub}
\DeclareMathOperator{\IN}{IN}
\DeclareMathOperator{\CS}{CS}
\DeclareMathOperator{\LP}{LP}
\DeclareMathOperator{\LPG}{LPG}
\DeclareMathOperator{\an}{\mathsf{an}}
\newcommand{\todo}[1]{\marginpar{\textbf{TODO\footnotemark}}\@latex@warning{TODO: #1}\footnotetext{ #1}}
\title{Self-referentiality in Justification Logic}
\author{Nathan Sebastian Gass \and Thomas Studer}
\date{}
\begin{document}
\maketitle

\framebox{
\begin{minipage}{\linewidth}
\textbf{Disclaimer.} 
This paper contains several mistakes. A revised version is under progress. 

\par\medskip

Note, however, that the counterexample in Section 4.3 is valid. It is correct that prehistoric cycles in G3s are not sufficient for needing a self-referential CS.

\end{minipage}
}

\begin{abstract}
The Logic of Proofs, LP, and other justification logics can have
self-referential justifications of the form \(t{:}A(t)\). Such
self-referential justifications are necessary for the realization of S4
in LP.  Yu discovered prehistoric cycles in a particular Gentzen
system as a necessary condition for S4 theorems that can only be
realized using self-referentiality. 
It was an open problem whether prehistoric cycles also are a sufficient condition.

The main results of this paper are:
First, with the standard definition of
self-referential theorems, prehistoric cycles are not a sufficient
condition. Second, with an expansion on that definition, prehistoric
cycles become sufficient for self-referential theorems.
\end{abstract}

\section{Introduction}\label{introduction}

Sergei Artemov~\cite{artemov1995,artemov2001} first introduced the Logic of Proofs, LP.
He replaced the  \(\GassBox \)-modality of S4 with explicit proof terms to obtain a classical provability semantics for intuitionistic logic.
Later more applications
of this explicit notations were discovered for different epistemic
logics~\cite{brezhnev2001}. So Artemov~\cite{artemov2008} introduced the more general
notion of justification logics where justification terms take over the
role of the proof terms in LP. In any justification logic \(t{:}A\) is
read as \(t\) is a justification of \(A\), leaving open what exactly
that entails. Using different axioms and different operators, various
different justification logic counterparts where developed for the
different modal systems used in epistemic logic (K, T, K4, S4, K45,
KD45, S5, etc.).

In justification logics it is possible for a term \(t\) to be a
justification for a formula \(A(t)\) containing \(t\) itself, i.e.~for
the assertion \(t{:}A(t)\) to hold. Prima facie this seems suspicious
from a philosophical standpoint as well for more formal mathematical
reasons. Such a self-referential sentence is for example impossible with
an arithmetic proof predicate using standard G\"odel numbers as the G\"odel
number of a proof is always greater than any number referenced in it as
discussed by Roman Kuznets~\cite{kuznets2010}.
In the same paper, the author argues that there is nothing inherently
wrong with self-referential justifications if we understand the
justifications as valid reasoning templates or schemes, which of course
then can be used on themselves.

Kuznets studied the topic of self-referentiality at the logic-level. He
discovered theorems of S4, D4, T and K4 that need a self-referential
constant specification to be realized in their justification logic
counterparts~\cite{kuznets2010}. Junhua
Yu on the other hand studied self-referentiality at the theorem level.
He discovered prehistoric cycles as a necessary condition for
self-referential S4 theorems~\cite{yu2010} 
and later expanded that results to the modal logics T and K4~\cite{yu2014}. 
He also conjectured that the
condition is actually sufficient for self-referential S4 theorems. In
this paper we will concentrate on that topic, that is prehistoric cycles
as necessary and sufficient condition for self-referential theorems in
S4.

This paper is divided in three parts. In the first part we introduce the
modal logic S4 and its justification counterpart LP. The second part
restates Yu's main theorem, i.e.~that prehistoric cycles are a
necessary condition for self-referential theorems in S4. The third part
goes beyond Yu's original paper by adapting the notion of prehistoric
cycles to Gentzen systems with cut rules and finally to a Gentzen system
for LP. This allows to study prehistoric cycles directly in LP, which
leads to the two main results of this paper. First, with the standard
definition of self-referential theorems, prehistoric cycles are not a
sufficient condition. Second, with an expansion on the definition of
self-referential theorems, prehistoric cycles become sufficient for
self-referential theorems.

\section{LP and S4}\label{lp-and-s4}

\subsection{Preliminaries}\label{preliminaries}

As the results and concepts in this paper are mostly purely syntactical,
we will also limit this brief introduction to the modal logic S4 and its
justification counterpart LP to the syntactic side. 

\begin{definition}[Syntax of S4] The language of S4 is given by
\(A := \bot \mid P \mid A_0 \land  A_1 \mid A_0 \lor  A_1 \mid A_0 \to  A_1 \mid \GassBox A \mid \GassDiamond A\). By using
the known abbreviations for \(\land \), \(\lor \) and \(\GassDiamond \) we can reduce
that to the minimal language \(A := \bot  \mid P \mid A_0 \to  A_1 \mid \GassBox A\).
\end{definition}

\begin{definition}[Syntax of LP] The language of LP consists of
terms given by \(t := c \mid x \mid t_0 \cdot  t_1 \mid t_0 + t_0 \mid\: !t\) and
formulas given by \(A := \bot  \mid P \mid A_0 \to  A_1 \mid t{:}A\). \end{definition}

A Hilbert style system for LP is given by the following Axioms and the
rules modus ponens and axiom necessitation~\cite{artemov2001}

\begin{itemize}
\item
  \(A0\): Finite set of axiom schemes of classical propositional logic
\item
  \(A1\): \(t{:}F \to  F\) (Reflection)
\item
  \(A2\): \(s{:}(F \to  G) \to  (t{:}F \to  (s\cdot t){:}G)\) (Application)
\item
  \(A3\): \(t{:}F \to \;!t{:}(t{:}F)\) (Proof Checker)
\item
  \(A4\): \(s{:}F \to  (s+t){:}F\), \(t{:}F \to  (s+t){:}F\) (Sum)
\end{itemize}

\begin{itemize}
\item
  \(R1\): \(F \to  G, F \vdash  G\) (Modus Ponens)
\item
  \(R2\): \(A \vdash  c{:}A\), if \(A\) is an axiom \(A0-A4\) and \(c\) a
  constant (Axiom Necessitation)
\end{itemize}

A Hilbert style derivation \(d\) from a set of assumptions \(\Gamma \) is a
sequence of formulas \(A_0, \ldots, A_n\) such that any formula is either an
instance of an axiom A0--A4, a formula \(A \in  \Gamma \) or derived from earlier
formulas by a rule R1 or R2. The notation \(\Gamma  \vdash _{\LP} A\) means that a
LP derivation from assumptions \(\Gamma \) ending in \(A\) exists. We also
write \(\vdash _{\LP} A\) or \(\LP \vdash  A\) if a LP derivation for \(A\) without
any assumptions exists.

When formulating such derivations, we will introduce propositional
tautologies without derivation and use the term propositional reasoning
for any use of modus ponens together with a propositional tautology.
This is of course correct as axioms A0 together with the modus ponens
rule R1 are a complete Hilbert style system for classical propositional
logic. Its easy to see by a simple complete induction on the proof
length that this derivations do not use any new terms not already
occurring in the final propositional tautology.

\begin{definition}[Constant Specification] A \emph{constant
specification} CS is a set of of formulas of the form \(c{:}A\) with
\(c\) a constant and \(A\) an axiom A0-A4. \end{definition}

Every LP derivation naturally generates a finite constant specification
of all formulas derived by axiom necessitation (R2). For a given
constant specification~CS, LP(CS) is the logic with axiom necessitation
restricted to that CS. \(\LP_0 := \LP(\emptyset)\) is the logic without axiom
necessitation. A constant specification CS is injective if for each
constant \(c\) there is at most one formula \(c{:}A \in  \CS\).

\subsection{Gentzen Systems}\label{gentzen-systems}

In the following, capital greek letters \(\Gamma \), \(\Delta \) are used for
multisets of formulas, latin letters \(P\), \(Q\) for atomic formulas
and latin letters \(A\), \(B\) for arbitrary formulas. We also use the
following short forms:

\begin{itemize}
\item
  \(\GassBox \Gamma  := \{\GassBox A | A \in  \Gamma \}\)
\item
  \(\Gamma ,A := \Gamma  \cup  \{A\}\)
\item
  \(\Gamma ,\Delta  := \Gamma  \cup  \Delta \)
\item
  \( \bigwedge \Gamma  := A_0 \land  \cdots \land  A_n\) and \( \bigvee \Gamma  := A_0 \lor  \cdots \lor  A_n\) for the
  formulas \(A_i \in  \Gamma \) in an arbitrary but fixed order.
\end{itemize}

Throughout this paper, we will use the G3s calculus from Troelstra and
Schwichtenberg~\cite{troelstra2000} for
our examples with additional rules \((\lnot \supset )\) and \((\supset \lnot )\) as we are only
concerned with classical logic (see figure \ref{G3sfull}). For proofs, on
the other hand, we  use a minimal subset of that system 
consisting only of \((Ax)\), \((\bot \supset )\), \((\to  \supset )\), \((\supset \to )\), \((\GassBox \supset )\), and \((\supset \GassBox)\) 
using the standard derived definitions for \(\lnot \),
\(\lor \), \(\land \) and \(\GassDiamond \). 

\renewcommand{\arraystretch}{3}
\begin{figure} \caption{Full G3s} \label{G3sfull}
\begin{longtable}{cc}

\AXC{$P, \Gamma  \supset  \Delta , P$ $(Ax)$ ($P$ atomic)}
\DP

&

\AXC{$\bot , \Gamma  \supset  \Delta $ $(\bot \supset )$}
\DP

\\

\RightLabel{$(\lnot  \supset )$}
\AXC{$\Gamma  \supset  \Delta , A$}
\UIC{$\lnot A, \Gamma  \supset  \Delta $}
\DP

&

\RightLabel{$(\supset  \lnot )$}
\AXC{$A, \Gamma  \supset  \Delta $}
\UIC{$\Gamma  \supset  \Delta , \lnot A$}
\DP

\\

\RightLabel{$(\land  \supset )$}
\AXC{$A, B, \Gamma  \supset  \Delta $}
\UIC{$A \land  B, \Gamma  \supset  \Delta $}
\DP

&

\RightLabel{$(\supset  \land )$}
\AXC{$\Gamma  \supset  \Delta , A$}
\AXC{$\Gamma  \supset  \Delta , B$}
\BIC{$\Gamma  \supset  \Delta , A \land  B$}
\DP

\\

\RightLabel{$(\lor  \supset )$}
\AXC{$A, \Gamma  \supset  \Delta $}
\AXC{$B, \Gamma  \supset  \Delta $}
\BIC{$A \lor  B, \Gamma  \supset  \Delta $}
\DP

&

\RightLabel{$(\supset  \lor )$}
\AXC{$\Gamma  \supset  \Delta , A, B$}
\UIC{$\Gamma  \supset  \Delta , A \lor  B$}
\DP

\\

\RightLabel{$(\to  \supset )$}
\AXC{$\Gamma  \supset  \Delta , A$}
\AXC{$B, \Gamma  \supset  \Delta $}
\BIC{$A \to  B, \Gamma  \supset  \Delta $}
\DP

&

\RightLabel{$(\supset  \to )$}
\AXC{$A,\Gamma  \supset  \Delta , B$}
\UIC{$\Gamma  \supset  \Delta , A \to  B$}
\DP

\\

\RightLabel{$(\GassBox  \supset )$}
\AXC{$A, \GassBox A, \Gamma  \supset  \Delta $}
\UIC{$\GassBox A, \Gamma \supset  \Delta $}
\DP

&

\RightLabel{$(\supset  \GassBox )$}
\AXC{$\GassBox \Gamma  \supset  \GassDiamond \Delta , A$}
\UIC{$\Gamma ', \GassBox \Gamma   \supset  \GassDiamond \Delta , \Delta ', \GassBox A$}
\DP

\\

\RightLabel{$(\GassDiamond  \supset )$}
\AXC{$A, \GassBox \Gamma  \supset  \GassDiamond \Delta $}
\UIC{$\GassDiamond A, \Gamma ', \GassBox \Gamma  \supset  \GassDiamond \Delta , \Delta '$}
\DP

&

\RightLabel{$(\supset  \GassDiamond )$}
\AXC{$\Gamma  \supset  \Delta , A, \GassDiamond A$}
\UIC{$\Gamma  \supset  \Delta , \GassDiamond A$}
\DP

\end{longtable}
\end{figure}

In Artemov~\cite{artemov2001}, a
Gentzen-Style system LPG is introduced for the logic of proofs LP using
explicit contraction and weakening rules, i.e.~based on G1c as defined
in Troelstra and Schwichtenberg~\cite{troelstra2000}. Later we will follow
Cornelia Pulver~\cite{pulver2010} instead and
use G3lp with the structural rules absorbed.

In all rules, arbitrary formulas which occur in the premises and
the conclusion (denoted by repeated multisets \(\Gamma \), \(\GassBox \Gamma \), \(\Delta \) and
\(\GassDiamond \Delta \)) are called side formulas. Arbitrary formulas which only occur in
the conclusion (denoted by new multisets \(\Gamma \), \(\Delta \), \(\Gamma '\), \(\Delta '\))
are called weakening formulas.\footnote{Notice that weakening formulas
  only occur in axioms and the rules \((\supset  \GassBox )\) and \((\GassDiamond  \supset )\),
  which are also the only rules that restrict the possible side
  formulas.} The remaining single new formula in the conclusion is
called the principal formula of the rule. The remaining formulas in the
premises are called active formulas. Active formulas are always used as
subformulas of the principal formula. Active formulas which are also
strict subformulas of other active formulas of the same rule as used in
\((: \supset )\) and \((\GassBox  \supset )\) are contraction formulas.

Formally, a Gentzen style proof is denoted by \(\GassT  = (T, R)\), where
\(T := \{S_0, \ldots, S_n\}\) is the set of occurrences of sequents, and
\begin{multline*}
R := \{(S_i,S_j) \in  T \times  T \mid\\ 
\text{$S_i$ is the conclusion of a rule which has $S_j$ as a premise}\}.
\end{multline*}
The only root sequent of \(\GassT \) is denoted
by \(S_r\). A leaf sequent \(S\) is a sequent without any premises, i.e
\(S \slashed{R} S'\) for all \(S' \in  T\).

A path in a proof tree is a list of related sequent occurrences
\(S_0 R \ldots R S_n\). A root path is a path starting at the root sequent
\(S_r\). A root-leaf path is a root path ending in a leaf
sequent.\footnote{Yu uses the term path for a root path and branch for a
  root-leaf path. As this terminology is ambiguous we adopted the
  slightly different terminology given here.} A root path is fully
defined by the last sequent~\(S\). So we will use root path \(S\) to
mean the unique path \(S_r R S_0 R \ldots R S\) from the root \(S_r\) to
the sequent \(S\). \(T\uhp S\) denotes the subtree of \(T\) with root \(S\).
The transitive closure of \(R\) is denoted by \(R^+\) and the
reflexive-transitive closure is denoted by \(R^*\).

Consistent with the notation for the Hilbert style system LP, the
notation \(G \vdash  \Gamma  \subset  \Delta \) is used if there exists a Gentzen style proof
tree with the sequent \(\Gamma  \subset  \Delta \) as root in the system \(G\).

\begin{definition}[Correspondence] \label{corr} The subformula
(symbol) occurrences in a proof correspond to each other as follows:

\begin{itemize}
\item
  Every subformula (symbol) occurrence in a side formula of a premise
  directly corresponds to the same occurrence of that subformula
  (symbol) in the same side formula in the conclusion.
\item
  Every active formula of a premise directly correspond to the topmost
  subformula occurrence of the same formula in the principal formula of
  the conclusion.
\item
  Every subformula (symbol) occurrence in an active formula of a premise
  directly corresponds to the same occurrence of that subformula
  (symbol) in the corresponding subformula in the principal formula of
  the rule.
\item
  Two subformulas (symbols) correspond to each other by the transitive
  reflexive closure of direct correspondence.
\end{itemize}

\end{definition}

As by definition correspondence is reflexive and transitive, we get the
following definition for the equivalence classes of correspondence:

\begin{definition}[Family] A family is an equivalence class of \(\GassBox \)
occurrences which respect to correspondence. \end{definition}

For the following lemma and all the other results in this paper
concerning correspondence, we fix a proof tree \(\GassT  = (T, R)\) and
consider correspondence according to this complete proof tree even when
talking about subtrees \(T\uhp S\) of~\(\GassT \).

As usual, we have the subformula property.

\begin{lemma}[Subformula Property] Any subformula (symbol)
occurrence in a partial Gentzen style (pre-)proof \(T\uhp S\) in G3s corresponds to \emph{at least one} subformula (symbol)
occurrence of the sequent \(S\) of \(T\uhp S\).

Any subformula (symbol) occurrence in a complete Gentzen style
(pre-)proof~\(T\) in G3s corresponds to
\emph{exactly} one subformula (symbol) occurrence in the root sequent
\(S_r\) of \(T\). \end{lemma}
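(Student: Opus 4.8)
The plan is to prove both halves of the lemma at once by introducing, for every subtree $T\uhp S$ with root sequent $S$, a \emph{descent map} $\rho_S$ sending each symbol/subformula occurrence appearing anywhere in $T\uhp S$ to a single occurrence in the root sequent $S$; then I would show (i) that every occurrence $o$ corresponds to $\rho_S(o)$, which already gives the ``at least one'' statement, and (ii) that $\rho_{S_r}$ is constant on correspondence classes and is the identity on occurrences of $S_r$, which upgrades this to ``exactly one''.

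First I would set up $\rho_S$. Fix an occurrence $o$ in a sequent $S'$ of $T\uhp S$ and let $S = U_0\, R\, U_1\, R \cdots R\, U_m = S'$ be the unique root path of $S'$ inside $T\uhp S$. For each step $U_j\, R\, U_{j+1}$ the sequent $U_{j+1}$ is a premise of the rule whose conclusion is $U_j$; by the way the rules are organised, every occurrence of a premise lies either in a side formula or inside an active formula, and Definition~\ref{corr} then assigns it a \emph{unique} occurrence of $U_j$ — namely the same occurrence in the matching copy of the side formula, or the corresponding occurrence inside the single principal formula of the rule. Iterating from $j=m-1$ down to $j=0$ yields a well-defined occurrence $\rho_S(o)\in S$, and since each of the $m$ steps realises a direct correspondence in the sense of Definition~\ref{corr}, $o$ corresponds to $\rho_S(o)$. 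Taking $S = S_r$ this proves the first assertion (and in the $\GassBox$-case it says each family has a representative in $S_r$). The one point needing justification is uniqueness of the one-step image: this is exactly where the shape of the G3s rules is used — each rule has a \emph{single} new principal formula in its conclusion, side formulas are copied verbatim, and active (including contraction) formulas occur as subformula occurrences of that principal formula — so Definition~\ref{corr} never offers a choice in the downward direction. It may, of course, identify several premise occurrences (this is precisely what contraction in $(\GassBox\supset)$ and the branching two-premise rules do), but that does not affect well-definedness of $\rho_S$.

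For the second assertion it remains to exclude a second correspondent in $S_r$. I would first record the compatibility property: if $o_1$ directly corresponds to $o_2$, then $\rho_{S_r}(o_1)=\rho_{S_r}(o_2)$. Indeed, a direct correspondence always pairs an occurrence $o_1$ in a premise $S_1$ with an occurrence $o_2$ in the conclusion $S_2$ of one rule, so $S_2\, R\, S_1$ and the root path of $S_1$ is that of $S_2$ with one extra step; the first descent step applied to $o_1$ is precisely that rule step, and by uniqueness of the one-step image together with $o_1$ being directly corresponded to $o_2$, it sends $o_1$ to $o_2$, whence $\rho_{S_r}(o_1)=\rho_{S_r}(o_2)$. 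Since correspondence is the reflexive–transitive closure of direct correspondence, an induction on the length of a connecting chain shows $\rho_{S_r}$ is constant on each correspondence class. Finally, $\rho_{S_r}$ restricted to occurrences of $S_r$ is the identity, because the root path of $S_r$ inside $T$ is trivial; hence if $o_1,o_2$ are occurrences of $S_r$ in one correspondence class then $o_1=\rho_{S_r}(o_1)=\rho_{S_r}(o_2)=o_2$, giving uniqueness.

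The main obstacle is confined to the bookkeeping in the second paragraph: one must check, for \emph{every} rule of G3s (and of the minimal fragment used for proofs), that each premise occurrence has a unique downward image. This is not deep, but it is the only substantive input; everything after it — constancy of $\rho_{S_r}$ on classes and the identity clause — is purely formal manipulation of the reflexive–transitive closure. It is also the step that makes the subformula property sensitive to the calculus: a system with genuinely multi-conclusion principal formulas would break the uniqueness of the one-step image.
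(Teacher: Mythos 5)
The paper states this lemma without proof (it is introduced with ``As usual, we have the subformula property''), so there is no in-paper argument to compare yours against; judged on its own, your proof is correct and complete. The descent map $\rho_S$ is well defined because, as you note, Definition~\ref{corr} is functional in the downward direction for every G3s rule: each premise occurrence sits either in a (verbatim-copied) side formula or inside an active formula, and in the latter case it is sent to the corresponding occurrence inside the single principal formula, with no choice available; the contraction formula in $(\GassBox\supset)$ and the two-premise rules only make the map non-injective, which is harmless. Your compatibility step is also the right one, with one small point worth making explicit: although the definition in the paper literally says ``reflexive transitive closure,'' families are declared to be equivalence classes, so correspondence must be read as the reflexive--symmetric--transitive closure; your chain induction already accommodates chains that traverse direct correspondences in either direction, and since every direct correspondence links a premise occurrence to its unique downward image, $\rho_{S_r}$ is constant on classes and the identity on $S_r$, which yields ``exactly one.'' Your argument also correctly explains why only ``at least one'' survives for proper subtrees $T\uhp S$ (correspondence is taken relative to the whole tree, so rules below $S$ may identify distinct occurrences of $S$), and why the whole lemma would fail in the presence of (Cut) or $(\GassBox\mathrm{Cut})$, where direct correspondence links two premises rather than a premise and the conclusion --- consistent with the paper's later remark that the subformula property no longer holds there.
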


\subsection{Annotated S4}\label{annotated-s4}

As we have already seen, all symbol occurrences in a Gentzen style proof
can be divided in disjoint equivalence classes of corresponding symbol
occurrences. In this text we will be mainly concerned with the
equivalence classes of \(\GassBox \) occurrences, called families, and their
polarities as defined below. We will therefore define annotated formulas,
sequents and proof trees in this section which make the families and
polarities of \(\GassBox \) occurrences explicit in the notation and usable in
definitions.

\begin{definition}[Polarity] Assign \emph{positive} or
\emph{negative polarity} relative to \(A\) to all subformulas
occurrences \(B\) in \(A\) as follows:

\begin{itemize}
\item
  The only occurrence of \(A\) in \(A\) has positive polarity.
\item
  If an occurrence \(B \to  C\) in \(A\) already has a polarity, then the
  occurrence of \(C\) in \(B \to  C\) has the same polarity and the
  occurrence of \(B\) in \(B \to  C\) has the opposite polarity.
\item
  If an occurrence \(\GassBox B\) already has a polarity, then the occurrence of
  \(B\) in \(\GassBox B\) has the same polarity.
\end{itemize}

Similarly all occurrences of subformulas in a sequent \(\Gamma  \supset  \Delta \) get
assigned a \emph{polarity} as follows:

\begin{itemize}
\item
  An occurrence of a subformula \(B\) in a formula \(A\) in \(\Gamma \) has
  the opposite polarity relative to the sequent \(\Gamma  \supset  \Delta \) as the same
  occurrence \(B\) in the formula \(A\) has relative to \(A\).
\item
  An occurrence of a subformula \(B\) in a formula \(A\) in \(\Delta \) has
  the same polarity relative to the sequent \(\Gamma  \supset  \Delta \) as the same
  occurrence \(B\) in the formula \(A\) has relative to \(A\).
\end{itemize}

\end{definition}

This gives the subformulas of a sequent \(\Gamma  \supset  \Delta \) the same polarity as
they would have in the equivalent formula \(\bigwedge \Gamma  \to  \bigvee\Delta \). Also notice that
for the derived operators all subformulas have the same polarity, except
for \(\lnot \) which switches the polarity for its subformula.

The rules of S4 respect the polarities of the subformulas, so that all
corresponding occurrences of subformulas have the same polarity
throughout the proof. We therefore assign positive polarity to families
of positive occurrences and negative polarity to families of negative
occurrences. Moreover, positive families in a S4 proof which have
occurrences introduced by a \((\supset  \GassBox )\) rule are called principal positive
families or simply principal families. The remaining positive families
are called non-principal positive families.\footnote{This is the same
  terminology as used in Yu~\cite{yu2010}. In
  many papers principal families are called essential families following
  the original text~\cite{artemov2001}.}

The following definition of an annotated proof as well as the
definition of a realization function are
heavily inspired by Fittings use of explicit annotations in Fitting~\cite{fitting2009}. 
Other than Fitting, we allow
ourselves to treat symbols \(\boxplus _i\), \(\boxminus _i\) directly as mathematical
objects and define functions on them, instead of encoding the symbols as
natural numbers.

For the following definition, we use an arbitrary fixed enumeration for
all different classes of families. That is, we enumerate all principal
positive families as \(p_0, \ldots , p_{n_p}\), all non-principal positive
families as \(o_0, \ldots, o_{n_o}\) and all negative families as
\(n_0, \ldots, n_{n_n}\). Given a S4 proof \(T\) we then annotate the
formulas \(A\) in the proof in the following way:

\begin{definition}[Annotated Proof]

\(\an_T(A)\) is defined recursively on all occurrences of subformulas
\(A\) in a proof \(T\) as follows:

\begin{itemize}
\item
  If \(A\) is the occurrence of an atomic formula \(P\) or \(\bot \), then
  \(\an_T(A) := A\).
\item
  If \(A = A_0 \to  A_1\), then \(\an_T(A) := \an_T(A_0) \to  \an_T(A_1)\)
\item
  If \(A = \GassBox A_0\) and the \(\GassBox \) belongs to a principal positive family
  \(p_i\), then \(\an_T(A) := \boxplus _i \an_T(A_0)\).
\item
  If \(A = \GassBox A_0\) and the \(\GassBox \) belongs to a non-principal positive
  family \(o_i\), then \(\an_T(A) := \boxdot_i \an_T(A_0)\).
\item
  If \(A = \GassBox A_0\) and the \(\GassBox \) belongs to a negative family \(n_i\),
  then \(\an_T(A) := \boxminus _i \an_T(A_0)\).
\end{itemize}

\end{definition}

\subsection{Realization}\label{sec:realization}

LP and S4 are closely related and LP can be understood as an explicit
version of S4. The other way around, S4 can be seen as a version of LP
with proof details removed or forgotten. We will establish this close
relationship in this section formally by two main theorems translating
valid LP formulas into valid S4 formulas and vice versa. The former is
called forgetful projection, the latter is more complex and called
realization.

\begin{definition}[Forgetful Projection] The \emph{forgetful
projection} \(A\GassCircli  \) of a LP formula \(A\) is the following S4 formula:

\begin{itemize}
\item
  if \(A\) is atomic or \(\bot \), then \(A\GassCircli   := A\).
\item
  if \(A\) is the formula \(A_0 \to  A_1\) then \(A\GassCircli   := A_0\GassCircli   \to  A_1\GassCircli  \)
\item
  if \(A\) is the formula \(t{:}A_0\) then \(A\GassCircli   := \GassBox A_0\)
\end{itemize}

The definition is expanded to sets, multisets and sequents of LP
formulas in the natural way. \end{definition}

The forgetful projection maps LP theorems to S4 theorems.

\begin{theorem}

If\/ \(\LP \vdash  A\) then \(S4 \vdash  A\GassCircli  \). \end{theorem}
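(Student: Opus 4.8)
The plan is to prove this by induction on the length of a Hilbert-style LP derivation of $A$, showing that the forgetful projection of each line is an S4 theorem. The base cases and inductive step follow the structure of the LP axiom system, so I would go axiom by axiom.

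First I would treat the axioms. For $A0$ (classical propositional tautologies), the forgetful projection is again a propositional tautology (since $\GassCircli$ commutes with $\to$ and leaves atoms fixed, it maps a substitution instance of a propositional scheme to another such instance), hence an S4 theorem. For $A1$, $(t{:}F \to F)\GassCircli$ is $\GassBox F\GassCircli \to F\GassCircli$, which is the S4 reflection axiom $\mathsf{T}$. For $A2$, $(s{:}(F\to G) \to (t{:}F \to (s\cdot t){:}G))\GassCircli$ is $\GassBox(F\GassCircli \to G\GassCircli) \to (\GassBox F\GassCircli \to \GassBox G\GassCircli)$, which is the S4 distribution axiom $\mathsf{K}$. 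For $A3$, $(t{:}F \to\;!t{:}(t{:}F))\GassCircli$ is $\GassBox F\GassCircli \to \GassBox\GassBox F\GassCircli$, which is the S4 transitivity axiom $\mathsf{4}$. For $A4$, both conjuncts project to $\GassBox F\GassCircli \to \GassBox F\GassCircli$ resp.\ $\GassBox G\GassCircli \to \GassBox G\GassCircli$, i.e.\ instances of $A \to A$, which are trivially S4 theorems.

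Next I would handle the two rules. For $R1$ (modus ponens): if earlier lines $F\to G$ and $F$ have been derived, then by induction hypothesis $S4 \vdash (F\to G)\GassCircli$, i.e.\ $S4 \vdash F\GassCircli \to G\GassCircli$, and $S4 \vdash F\GassCircli$; one application of modus ponens in S4 gives $S4 \vdash G\GassCircli$. For $R2$ (axiom necessitation): if $c{:}A$ is obtained from an axiom $A$, then $(c{:}A)\GassCircli = \GassBox A\GassCircli$; by the axiom analysis above $S4 \vdash A\GassCircli$, and then the S4 necessitation rule yields $S4 \vdash \GassBox A\GassCircli$. This closes the induction.

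The routine but slightly delicate point — the nearest thing to an obstacle — is verifying that the forgetful projection of an $A0$ instance is itself an S4-provable propositional tautology: one must observe that $\GassCircli$ acts as a substitution on propositional skeletons, replacing each outermost $t{:}(\cdot)$ by a fresh $\GassBox(\cdot)$ uniformly, so tautologyhood is preserved; since classical propositional logic is contained in S4, we are done. Everything else is a direct match of each LP axiom's projection against a standard S4 axiom or rule, so no genuine difficulty arises. (One could alternatively phrase the whole argument as: the projection sends each LP axiom to an S4 axiom or theorem and commutes with the rules, hence is a sound translation — but the explicit induction is cleaner to write down.)
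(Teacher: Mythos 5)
Your proof is correct: the induction on the length of the Hilbert-style LP derivation, checking that each axiom's forgetful projection is an S4 axiom or theorem (A1 $\mapsto$ T, A2 $\mapsto$ K, A3 $\mapsto$ 4, A4 $\mapsto$ a trivial implication, A0 $\mapsto$ a propositional tautology) and that R1 and R2 project to modus ponens and necessitation, is the standard argument for this result. The paper itself states the theorem without proof (it is Artemov's well-known forgetful projection lemma), and your write-up matches the canonical proof, including the correct observation that $(!t{:}(t{:}F))\GassCircli = \GassBox\GassBox F\GassCircli$.
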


In the other direction, one can realize S4 formulas in LP by replacing
the \(\GassBox \) occurrences by explicit justification terms as defined below.
Of course most of this realizations will not transform a theorem of S4
into a theorem of LP. So the realization theorem will only assert the
existence of a specific realization producing a theorem of LP from a
theorem of S4. The constructive proof for the realization theorem also
provides us with an algorithm to generate one such realization. However,
that realization is not necessarily the only possible realization or the
simplest one.

\begin{definition}[Realization Function] A  \emph{realization function} \(r_T\) for a proof \(T\) is a
mapping from the set of different \(\GassBox \) symbols used in \(\an_T(T)\) to
arbitrary LP terms. \end{definition}

\begin{definition}[LP-Realization] By an \emph{LP-realization} of a
modal formula \(A\) we mean an assignment of proof polynomials to all
occurrences of the modality in \(A\) along with a constant specification
of all constants occurring in those proof polynomials. By \(A^r\) we
understand the image of \(A\) under a realization \(r\). \end{definition}

An LP-realization of \(A\) is fully determined by a realization function \(r_T\) relative to a proof
tree for \(\supset  A\) and a constant specification of all constants
occurring in \(r_T\) with 
\(A^r := r_T(\an_T(A))\).

If we read \(\GassBox A\) as \emph{there exists a proof for \(A\)} and \(t{:}A\) as
\emph{\(t\) is a proof for \(A\)}, this process seems immediately reasonable.
For the formula \(\lnot \GassBox A\), read as \emph{there is no proof of \(A\)}, and its
realization \(\lnot t{:}A\), read as \emph{\(t\) is not a proof of \(A\)}, on the
other hand, that process seems wrong at first. But justification logic
without any quantifications over proofs is still enough to capture the
meaning of \(\lnot \GassBox A\) by using Skolem's idea of replacing quantifiers with
functions. That is, we realize \(\lnot \GassBox A\) using an implicitly all
quantified justification variable \(\lnot x{:}A\). The same example
formulated without the derived connective \(\lnot \) is \(x{:}A \to  \bot \). That
formula can be read as function which produces a contradiction from a
given proof \(x\) for~\(A\).

This last interpretation also hints at the role of complex justification
terms using variables in a realization. They define functions from input
proofs named by the variables to output proofs for different formulas.
So a realization \[x{:}A \to  t(x){:}B\] of an S4 formula \(\GassBox A \to  \GassBox B\)
actually defines a function \(t(x)\) producing a proof for \(B\) from a
proof \(x\) for \(A\). This then is the Skolem style equivalent of the
quantified formula \( \exists (x) x{:}A \to  \exists (y) y{:}B\) which is the direct
reading of \(\GassBox A \to  \GassBox B\) (cf Artemov~\cite{artemov2008}). This discussion implies
that we should replace \(\GassBox \) with negative polarity with justification
variables, which leads to the following definition of a normal
realization:

\begin{definition}[Normal] A realization function is \emph{normal}
if all symbols for negative families and non-principal positive families
are mapped to distinct variables. A LP-realization is \emph{normal} if
the corresponding realization function is normal and the CS is
injective. \end{definition}

We are now ready to complete the connection between S4 and LP by the
following realization theorem giving a constructive way of producing the
necessary proof functions to realize a S4 theorem in LP:

\begin{theorem}[Realization]
\label{realization} If \(S4 \vdash  A\) then \(\LP \vdash  A^r\) for some normal
LP-realization \(r\). \end{theorem}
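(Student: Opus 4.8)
The plan is to prove the Realization Theorem constructively by induction on a cut-free G3s derivation of $\supset A$, following the classical Artemov method as reorganized through the annotated-proof machinery set up above. First I would fix a cut-free proof tree $T$ of $\supset A$ in the minimal G3s fragment, annotate it via $\an_T$, and enumerate the families: the principal positive families $p_0,\dots,p_{n_p}$, the non-principal positive families $o_0,\dots,o_{n_o}$, and the negative families $n_0,\dots,n_{n_n}$. The realization function $r_T$ is then built family by family. For every negative family $n_i$ and every non-principal positive family $o_i$, set $r_T(\boxminus_i)$ and $r_T(\boxdot_i)$ to be distinct fresh justification variables (this is exactly what makes the resulting realization \emph{normal}). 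The real work is in assigning terms to the principal positive families $\boxplus_i$.

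The principal-family terms are constructed by a second induction that walks the proof tree from the leaves toward the root, maintaining the invariant that each sequent, once every $\GassBox$ in it is replaced according to the realization function built so far (with the $\boxplus_i$ for the not-yet-processed families still left as placeholders / free variables), is derivable in $\LP$ with an appropriate constant specification. The key case is $(\supset\GassBox)$: here a new occurrence of some principal family $p_i$ is introduced, and all occurrences of $p_i$ in the whole proof lie in positive position and are ultimately introduced by $(\supset\GassBox)$ steps. For each such step we have, by induction hypothesis, an $\LP$-proof of the realized premise $\GassBox\Gamma \supset \GassDiamond\Delta, B$, i.e. of $y_1{:}C_1,\dots \supset (\text{stuff}), B'$ where the boxes on the left are already realized; lifting this (using the internalization / Lifting Lemma for $\LP$, which follows from axioms A2, A3 and axiom necessitation R2) produces a term $t_j$ with $\LP \vdash y_1{:}C_1,\dots \supset t_j{:}B'$. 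One then sets $r_T(\boxplus_i) := s_1 + s_2 + \dots + s_k$ where the $s_j$ are the (variable-renamed) lifting terms coming from \emph{all} the $(\supset\GassBox)$ occurrences belonging to family $p_i$; the $(+)$-axiom A4 lets a single sum-term stand in for the term needed at each individual occurrence, and the $(\GassBox\supset)$ rule is handled using the reflection axiom A1. The propositional rules $(Ax)$, $(\bot\supset)$, $(\to\supset)$, $(\supset\to)$ go through by pure propositional reasoning in $\LP$, which the excerpt already licenses.

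A point that needs care is the \emph{order} in which the principal families are processed: the lifting term produced at a $(\supset\GassBox)$ step for family $p_i$ may itself mention $\GassBox$-occurrences (hence terms) belonging to other families that appear in $\GassBox\Gamma$ or in $B$. To avoid circular definitions one argues that the principal families can be topologically ordered so that $r_T(\boxplus_i)$ only ever depends on realizations of families already fixed; in the classical S4 proof this works because the relevant dependency relation on principal families is acyclic along each branch. This acyclicity is precisely the phenomenon that Yu's prehistoric-cycle analysis refines, so here I would just invoke it in the coarse form needed for the basic realization theorem. Once the ordering is available, the induction closes and we obtain $\LP(\CS) \vdash A^r$ for the finite $\CS$ generated by the R2-steps used in the liftings; since $\CS$ can be taken injective (rename constants) and the negative and non-principal positive families were sent to distinct variables, $r$ is normal, and $\LP \vdash A^r$.

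I expect the main obstacle to be exactly this dependency/ordering issue for the principal positive families — setting up the right partial order and checking it is well-founded (acyclic) so that the sum-term $r_T(\boxplus_i)$ can be defined without self-reference — together with the bookkeeping needed to show that substituting a single sum-term for all occurrences of a given family simultaneously preserves $\LP$-derivability at every sequent where that family occurs. The lifting lemma itself and the propositional cases are routine; the annotation framework from the previous subsections is what makes the bookkeeping tractable.
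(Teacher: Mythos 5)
The paper does not prove this theorem itself; it states it and defers to the literature (Artemov's original constructive proof, Fitting's method, etc.). Your proposal reconstructs the standard Artemov-style argument, and most of it is on track: cut-free G3s proof, distinct fresh variables for negative and non-principal positive families, sum terms \(s_1+\cdots+s_k\) for each principal family built from Lifting-Lemma terms at its \((\supset\GassBox)\) nodes, reflection for \((\GassBox\supset)\), propositional reasoning for the remaining rules.

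However, the step you flag as the ``main obstacle'' is resolved incorrectly, and the error is fatal for exactly the formulas this paper is about. You claim the principal families can be topologically ordered because ``the relevant dependency relation on principal families is acyclic along each branch,'' and you invoke this acyclicity to define \(r_T(\boxplus_i)\) without circularity. That dependency relation is precisely Yu's prehistoric relation \(\prec\), and it is \emph{not} acyclic in general: the paper's running example \(\lnot\GassBox(P\land\lnot\GassBox P)\) has a single principal family with \(\boxplus\prec_L\boxplus\), yet it is realizable. A proof that presupposes a topological order on families simply does not apply to such theorems, so as written your induction does not close. The standard repair is to realize each principal family \(p_i\) provisionally by a sum \(u_{i,0}+\cdots+u_{i,l_i-1}\) of fresh \emph{provisional variables}, process the \((\supset\GassBox)\) nodes by depth in the tree (an order on rule occurrences, not on families), and at node \(R_{i,j}\) substitute the lifted term \(t_{i,j}\) for \(u_{i,j}\) everywhere in the derivation. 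This always terminates because \(t_{i,j}\), being built by the Lifting Lemma solely from fresh constants, terms \(!x\) for the hypotheses, and applications, never contains \(u_{i,j}\) as a subterm; the apparent circularity does not live in the term structure at all but reappears only in the \emph{formulas} justified by the new constants, i.e.\ as a (possibly self-referential) constant specification \(c{:}A(c)\) --- which the theorem permits and which is the very phenomenon the rest of the paper studies. With that correction the bookkeeping you describe (simultaneous substitution preserving \(\LP\)-derivability, injectivity of \(\CS\) by renaming constants) goes through as you indicate.
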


There are many proofs of the realization theorem available.
Artemov~\cite{artemov2001} already established it in his original paper on the Logic of Proofs.
Fitting~\cite{fitting2009} introduces a different proof-theoretic realization method.
Adapting his method to nested sequent systems yields a modular realization theorem that covers many modal logics~\cite{GoeKuz12APAL}. 
There is also a semantic proof of the realization theorem available~\cite{fitting2005} and Fitting~\cite{Fit13JLC} develops a  general realization method that uses the model existence theorem. 
A realization theorem can sometimes be obtained using a translation from one logic into another~\cite{BucKuzStu14Realizing,Fit11SynLib}.

\section{Prehistoric Relations in
G3s}\label{prehistoric-relations-in-g3s}

\sectionmark{Prehist. Rel. in G3s}

\subsection{Self-referentiality}\label{self-referentiality}

As already mentioned in the introduction, the formulation of LP allows
for terms \(t\) to justify formulas \(A(t)\) about themselves. We will
see that such self-referential justification terms are not only
possible, but actually unavoidable for realizing S4 even at the basic
level of justification constants. That is to realize all S4 theorems in
LP, we need self-referential constant specifications defined as follows:

\begin{definition}[Self-Referential Constant Specification]
A constant specification CS is 
\begin{itemize}
\item
  \emph{directly self-referential} if
  there is a constant \(c\) such that \(c{:}A(c) \in  \CS\).
\item
  \emph{self-referential} if there is a
  subset \(A \subseteq  \CS\) such that
  \[A := \{c_0{:}A(c_1), \ldots, c_{n-1}{:}A(c_0)\}.\]
\end{itemize}

\end{definition}

A constant specification which is not directly self-referential is
denoted by~\(\CS^*\). Similarly a constant specification which is not
self-referential at all is denoted by \(\CS^\odot \). So \(\CS^*\) and
\(\CS^\odot \) stand for a class of constant specifications and not a single
specific one. Following Yu~\cite{yu2010}, we
 use the notation \(\LP(\CS^\odot ) \vdash  A\) if there exists any
non-self-referential constant specification CS such that
\(\LP(\CS) \vdash  A\). There does exist a single maximal constant
specification \(\CS_{nds}\) that is not directly self-referential and
for any theorem \(A\) we have \(\LP(\CS^*) \vdash  A\) iff
\(\LP(\CS_{nds}) \vdash  A\).

Given that any S4 theorem is realizable in LP with some constant
specification, we can carry over the definition of self-referentiality
to S4 with the following definition:

\begin{definition}[Self-Referential Theorem] An S4 theorem \(A\) is
(directly) self-referential iff for any LP-realization \(A^r\) we have that
\(\LP(\CS^\odot ) \slashed{\vdash } A^r\) (respectively
\(\LP(\CS^*) \slashed{\vdash } A^r\)). \end{definition}

Expanding on a first result for S4 in Brezhnev and Kuznets~\cite{brezhnev2006}, Kuznets~\cite{kuznets2010} explores the topic of
self-referentiality on the level of individual modal logics and their
justification counterparts. He gives theorems for the modal logics S4,
D4, T, and K4 which can only be realized in their justification logic
counterpart using directly self-referential constant specifications,
i.e.~directly self-referential theorems by the above definition. So for
S4 in particular, Kuznets gives the theorem \(\lnot \GassBox \lnot (S \to  \GassBox S)\) and shows
that it is directly self-referential.

We will not reproduce this result but use the logically equivalent
formula \(\lnot \GassBox (P \land  \lnot \GassBox P)\) as an example for a self-referential S4 theorem.
Notice that it does not directly follow from the above theorem that
\(\lnot \GassBox (P \land  \lnot \GassBox P)\) can only be realized with a self-referential constant
specification, as justification terms do not necessary apply to
logically equivalent formulas.
Still it should be
fairly straightforward to show that \(\lnot \GassBox (P \land  \lnot \GassBox P)\) is self-referential
by translating justification terms for the outer \(\GassBox \) occurrences in the formulas
\(\lnot \GassBox (P \land  \lnot \GassBox P)\) and \(\lnot \GassBox \lnot (S \to  \GassBox S)\) using the logical equivalence of
\(P \land  \lnot \GassBox P\) and \(\lnot (S \to  \GassBox S)\).

Looking at the G3s proof for \(\lnot \GassBox (P \land  \lnot \GassBox P)\) and a realization of that
proof in figure \ref{proofs}, we can see why a self referential term
like \(t\) for the propositional tautology \(P \land  \lnot t\cdot x{:}P \to  P\) is
necessary. In order to prove \(\lnot \GassBox (P \land  \lnot \GassBox P)\) one needs to disprove
\(P \land  \lnot \GassBox P\) at some point which means one has to prove \(\GassBox P\). The only
way to prove \(\GassBox P\) is using \(\GassBox (P \land  \lnot \GassBox P)\) as an assumption on the
left. This leads to the situation that the proof introduces \(\GassBox \) by a
\((\supset  \GassBox )\) rule where the same family already occurs on the left. 
In the following, we will see that such a situation
is actually necessary for the self-referentiality of any S4 formula.

\begin{figure} \caption{proof for $\lnot \GassBox (P \land  \lnot \GassBox P)$ and its realization} \label{proofs}
\begin{longtable}{cc}
\AXC{$P, \lnot \GassBox P, \GassBox (P\land \lnot \GassBox P) \supset  P$}
\RightLabel{$(\land  \supset )$}
\UIC{$P \land  \lnot \GassBox P, \GassBox (P\land \lnot \GassBox P) \supset  P$}
\RightLabel{$(\GassBox  \supset )$}
\UIC{$\GassBox (P\land \lnot \GassBox P) \supset  P$}
\RightLabel{$(\supset  \GassBox )$}
\UIC{$P, \GassBox (P\land \lnot \GassBox P) \supset  \GassBox P$}
\RightLabel{$(\lnot  \supset )$}
\UIC{$P, \lnot \GassBox P, \GassBox (P\land \lnot \GassBox P) \supset $}
\RightLabel{$(\land  \supset )$}
\UIC{$P \land  \lnot \GassBox P, \GassBox (P\land \lnot \GassBox P) \supset $}
\RightLabel{$(\GassBox  \supset )$}
\UIC{$\GassBox (P \land  \lnot \GassBox P) \supset $}
\RightLabel{$(\supset  \lnot )$}
\UIC{$\supset  \lnot \GassBox (P \land  \lnot \GassBox P)$}
\DP

&

\AXC{$P, \lnot t\cdot x{:}P, x{:}(P \land  \lnot t\cdot x{:}P) \supset  P$}

\UIC{$P \land  \lnot t\cdot x{:}P, x{:}(P \land  \lnot t\cdot x{:}P) \supset  P$}

\UIC{$x{:}(P \land  \lnot t\cdot x{:}P) \supset  P$}

\UIC{$P, x{:}(P \land  \lnot t\cdot x{:}P) \supset  t\cdot x{:}P$}

\UIC{$P, \lnot t\cdot x{:}P, x{:}(P \land  \lnot t\cdot x{:}P) \supset $}

\UIC{$P \land  \lnot t\cdot x{:}P, x{:}(P \land  \lnot t\cdot x{:}P) \supset $}

\UIC{$x{:}(P \land  \lnot t\cdot x{:}P) \supset $}

\UIC{$\supset  \lnot x{:}(P \land  \lnot t\cdot x{:}P)$}
\DP
\end{longtable}
\end{figure}

\subsection{Prehistoric Relations}\label{prehistoric-relations}

In his paper ``Prehistoric Phenomena and Self-referentiality''~\cite{yu2010}, Yu gives a formal definition for the
situation described in the last section, which he calls a prehistoric
loop. In the later paper~\cite{yu2017}, Yu
adopts the proper graph theoretic term cycle as we do here. Beside that
change we will reproduce his definitions of prehistoric relation,
prehistoric cycle as well as some basic lemmas about this new notions
exactly as they were presented in the original paper.

To work with the \((\supset  \GassBox )\) rules introducing occurrences of principal
families in a G3s proof, we will use the following notation: 
we enumerate all \((\supset  \GassBox )\) rules
introducing an occurrence of the principal family \(p_i\) as
\(R_{i,0}, \ldots R_{i,l_i-1}\) and use \(I_{i,0}, \ldots I_{i,l_i-1}\) to
denote the premises of those rules and \(O_{i,0}, \ldots O_{i,l_i-1}\) to
denote their conclusions, see
  Yu~\cite{yu2010}

\begin{definition}[History] In a root-leaf path \(S\) of the form
\(S_rR^*O_{i,j}RI_{i,j}R^*S\) in a G3s-proof \(T\), the path
\(S_rR^*O_{i,j}\) is called a \emph{history} of the family \(p_i\) in
the root-leaf path \(S\). The path \(I_{i,j}R^*S\) is called a
\emph{pre-history} of \(p_i\) in the root-leaf path \(S\).
 \end{definition}

So intuitively every \((\supset  \GassBox )\) rule divides a root-leaf path of the
proof tree into two parts. The first part from the root of the tree to
the conclusion of the \((\supset  \GassBox )\) rule of sequents having a copy of that
\(\GassBox \) symbol, i.e.~the history of that \(\GassBox \) symbol from its formation
up to the root sequent. And the second part which predates the formation
of that \(\GassBox \) symbol, i.e.~all sequents from the leaf up to the premise
of that \((\supset  \GassBox )\) rule, which do not have a copy of that symbol. The
informal notion of ``having a copy of that symbol'' is not the same as
correspondence, as it is not transitively closed. It is possible to have
corresponding \(\boxplus _i\) occurrences of a family \(p_i\) in a prehistory of
that same family. 
The proof in figure~\ref{proofs} of our example theorem
exhibits this case.

As we are especially interested in these cases, that is occurrences of
principal families in prehistoric periods, the following definition and
lemma give that concept a precise meaning and notation:

\begin{definition}[Prehistoric Relation] \label{local1} For any
principal positive families \(p_i\) and \(p_h\) and any root-leaf path
\(S\) of the form \(S_rR^*O_{i,j}RI_{i,j}R\ast S\) in a S4 proof
\(\GassT  = (T, R)\):

\begin{enumerate}
\def\labelenumi{(\arabic{enumi})}
\item
  If \(\an_T(I_{i,j})\) has the form
  \(\boxminus _{k_0}B_{k_0}, \ldots, \boxminus _{k}B_k(\boxplus _h C), \ldots, \boxminus _{k_q}B_{k_q} \supset  A\),
  then \(p_h\) is a \emph{left prehistoric family} of \(p_i\) in \(S\)
  with notation \(h \prec ^S_L i\).
\item
  If \(\an_T(I_{i,j})\) has the form
  \(\boxminus _{k_0} B_{k_0}, \ldots, \boxminus _{k_q}B_{k_q} \supset  A(\boxplus _h C)\) then \(p_h\) is a
  \emph{right prehistoric family} of \(p_i\) in \(S\) with notation
  \(h \prec ^S_R i\).
\item
  The relation of \emph{prehistoric family} in \(S\) is defined by:
  \(\prec ^S := \prec ^S_L \cup  \prec ^S_R\). The relation of \emph{(left, right)
  prehistoric family} in \(T\) is defined by:
  \[
  \prec _L := \bigcup \{\prec ^S_L \ \mid \text{$S$ is a leaf}\}
  \qquad
  \prec _R := \bigcup \{\prec ^S_R \ \mid \text{$S$ is a leaf}\}
  \]
  and \(\prec  := \prec _L \cup  \prec _R\).
\end{enumerate}

\end{definition}

Even though both definitions so far use the notion of a prehistory, they
do not directly refer to each other. But the following lemma provides
the missing connection between these two definitions and therefore
explains the common terminology:

\begin{lemma} \label{global} There is an occurrence of \(\boxplus _h\) in a
pre-history of \(p_i\) in the root-leaf path \(S\) iff \(h \prec ^S i\).
\end{lemma}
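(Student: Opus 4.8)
The plan is to prove the biconditional by treating the two directions separately; essentially all of the work sits in the forward direction and is delivered by the Subformula Property, with the premise \(I_{i,j}\) of the relevant \((\supset\GassBox)\) rule serving as the common pivot.

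First I would handle the direction from \(h \prec^S i\) to the existence of an occurrence of \(\boxplus_h\) in a pre-history of \(p_i\) in \(S\), which is immediate from the definitions. By Definition~\ref{local1}, \(h \prec^S i\) means \(h \prec^S_L i\) or \(h \prec^S_R i\), and in either case \(\an_T(I_{i,j})\) contains an occurrence of \(\boxplus_h\), located in the antecedent in the first case and in the succedent in the second. Since \(R^*\) is reflexive, the sequent \(I_{i,j}\) is the first sequent of the pre-history \(I_{i,j}R^*S\), so this \(\boxplus_h\) is already an occurrence in the pre-history.

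For the converse, suppose some sequent \(S'\) on the path \(I_{i,j}R^*S\) carries an occurrence of \(\boxplus_h\). Then \(I_{i,j}R^*S'\), so \(S'\) lies in the subtree \(T\uhp I_{i,j}\), and the Subformula Property applied to the partial proof \(T\uhp I_{i,j}\) shows that this \(\boxplus_h\) occurrence corresponds to at least one symbol occurrence in the root sequent \(I_{i,j}\). The next step is to check that such a corresponding occurrence in \(I_{i,j}\) is again a \(\boxplus_h\): by the definition of (direct) correspondence a \(\GassBox\)-symbol occurrence only ever corresponds to \(\GassBox\)-symbol occurrences, the correspondence class of a \(\GassBox\)-occurrence is by definition its family, and — because correspondence is fixed relative to the complete proof tree \(\GassT\) even inside subtrees — the occurrence in \(I_{i,j}\) belongs to the same (principal positive) family \(p_h\) and is therefore annotated \(\boxplus_h\). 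Hence \(\an_T(I_{i,j})\) contains an occurrence of \(\boxplus_h\). Since \(I_{i,j}\) is the premise of a \((\supset\GassBox)\) rule, every formula in its antecedent is headed by a \(\GassBox\) of negative polarity, so the shape of \(\an_T(I_{i,j})\) matches exactly the patterns of Definition~\ref{local1}: if the \(\boxplus_h\) occurrence lies in the antecedent we obtain \(h \prec^S_L i\), if it lies in the succedent we obtain \(h \prec^S_R i\), and in both cases \(h \prec^S i\).

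I do not anticipate a genuine obstacle here: the argument is short, and the only two points that need care are (i) observing that \(I_{i,j}\) is simultaneously the premise of the rule \(R_{i,j}\) and the root of the subtree to which the Subformula Property is applied — this is exactly what makes both directions reduce to a statement about \(\an_T(I_{i,j})\) — and (ii) keeping ``correspondence'' fixed relative to \(\GassT\) throughout, so that the class in question really is the global family \(p_h\) rather than an artefact of the subtree \(T\uhp I_{i,j}\). If a root-leaf path happens to pass through several \((\supset\GassBox)\) rules introducing occurrences of the same family \(p_i\), the same argument applies verbatim to each associated premise, so the biconditional holds for every pre-history of \(p_i\) in \(S\).
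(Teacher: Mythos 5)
Your proof is correct and follows essentially the same route as the paper's: the easy direction is read off from Definition~\ref{local1} together with the reflexivity of \(R^*\), and the hard direction applies the Subformula Property to the subtree \(T\uhp I_{i,j}\) to pull the \(\boxplus_h\) occurrence down into \(I_{i,j}\), then splits on whether it lands in the antecedent or succedent. Your additional remarks — that correspondence preserves the family and is taken relative to the full tree \(\GassT\) — are points the paper leaves implicit but do not change the argument.
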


\begin{proof}

(\GassRight ): \(\boxplus _h\) occurs in a sequent \(S'\) in a pre-history of \(p_i\) in
the root-leaf path \(S\), so \(S\) has the form
\(S_rR^*O_{i,j}RI_{i,j}R^*S'R^*S\) for some \(j < l_i\). By the
subformula property, there is an occurrence of \(\boxplus _h\) in \(I_{i,j}\) as
\(S'\) is part of \(T\uhp I_{i,j}\). If this occurrence is on the left we
have \(h \prec ^S_L i\), if it is on right we have \(h \prec ^S_R i\). In both
cases \(h \prec ^S i\) holds.

(\GassLeft ): By definition there is a \(I_{i,j}\) in \(S\), where \(\boxplus _h\) occurs
either on the left (for \(h \prec ^S_L i\)) or on the right (for
\(h \prec ^S_R i\)). \(I_{i,j}\) is part of the pre-history of \(R_{i,j}\) in
\(S\). \end{proof}

Having introduced the concepts of prehistoric periods and prehistoric
relations, we are now ready to define the concept of prehistoric cycles
used in Yu's theorem:

\begin{definition}[Prehistoric Cycle] In a G3s-proof \(T\), the
ordered list of principal positive families \(p_{i_0},\ldots, p_{i_{n-1}}\)
with length \(n\) is called a \emph{prehistoric cycle} or \emph{left
prehistoric cycle} respectively, if we have:
\(i_0 \prec  i_2 \prec  \cdots \prec  i_{n-1} \prec  i_0\) or
\(i_0 \prec _L i_2 \prec _L \cdots \prec _L i_{n-1} \prec _L i_0\). \end{definition}

In our example formula, we have a prehistoric cycle consisting of a
single principal family which has a left prehistoric relation to itself.
The following lemma shows that  if a proof has a prehistoric cycle, then it
also has a left prehistoric cycle:

\begin{lemma}

\(T\) has a prehistoric cycle iff \(T\) has a left prehistoric cycle.
\end{lemma}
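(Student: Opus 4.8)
The direction from left prehistoric cycle to prehistoric cycle is immediate, since $\prec_L \subseteq \prec$ and so $i_0 \prec_L i_1 \prec_L \cdots \prec_L i_{n-1} \prec_L i_0$ witnesses a prehistoric cycle directly. So the work is entirely in the converse: given a prehistoric cycle $p_{i_0} \prec p_{i_1} \prec \cdots \prec p_{i_{n-1}} \prec i_0$, produce a left prehistoric cycle. The plan is to analyze what a \emph{right} prehistoric relation $h \prec^S_R i$ forces. If $\boxplus_h$ occurs positively in $A(\boxplus_h C)$ on the right of $\an_T(I_{i,j})$, then this $\boxplus_h$ occurrence lies inside the active formula $A$ that becomes (the body of) the principal formula $\boxplus_i A$ in the conclusion $O_{i,j}$. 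The key structural fact to extract is: a $\boxplus_h$ occurring on the right-hand side inside the premise of a $(\supset \GassBox)$ rule for $p_i$ must itself eventually be introduced by some $(\supset \GassBox)$ rule $R_{h,j'}$ that sits \emph{above} $O_{i,j}$ on this branch --- i.e., $R_{h,j'}$ is on the root path $I_{i,j} R^* S$. I would prove this by following the correspondence class of that $\boxplus_h$ occurrence upward (toward the leaves); since $h$ is a principal family it has a $(\supset \GassBox)$-introducing rule, and on a given branch the occurrence inside $A$ on the right must be created above $I_{i,j}$, because to the \emph{left} of $I_{i,j}$ (toward the root) it remains inside the principal formula $\boxplus_i A$ of $p_i$, hence cannot be principal for its own rule there.

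With that lemma in hand, the argument runs as follows. Fix a leaf $S$ and the cycle $i_0 \prec^{?} i_1 \prec^{?} \cdots$; by taking the leaf and branch appropriate to each link (unfolding the definition of $\prec = \bigcup_S \prec^S$), arrange that each consecutive relation $i_k \prec i_{k+1}$ is witnessed on some branch. I would argue that whenever a link is $i_k \prec_R i_{k+1}$ (a right relation), the occurrence of $\boxplus_{i_k}$ witnessing it is introduced by a rule $R_{i_k, j'}$ lying in the pre-history segment above $O_{i_{k+1}}$; then I examine the premise $I_{i_k,j'}$ of that rule. The crucial point is that $I_{i_k,j'}$ must itself contain, on its \emph{left}, an occurrence of some $\boxplus_m$ with $m$ principal --- because the $(\supset \GassBox)$ rule for $p_{i_k}$ has the shape $\GassBox\Gamma \supset \GassDiamond\Delta, A \;\big/\; \Gamma', \GassBox\Gamma \supset \GassDiamond\Delta, \Delta', \boxplus_{i_k} A$, so the side formulas $\GassBox\Gamma$ on the left of the premise... actually, what I really need is that $\boxplus_{i_{k-1}}$ (the previous family in the cycle) reaches $I_{i_k,j'}$ on the left. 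Here I would use that $\boxplus_{i_{k-1}}$ occurs in $I_{i_{k-1}, \cdot}$ and trace it down/around to $I_{i_k, j'}$: since $R_{i_{k-1},\cdot}$ and $R_{i_k,j'}$ are comparable on the branch, the $\boxplus_{i_{k-1}}$ occurrence persists as a side formula and, relative to $I_{i_k,j'}$, lands on the left, giving $i_{k-1} \prec_L i_k$.

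The cleanest packaging is probably: show that $h \prec_R i$ implies there is a principal family $h'$ with $h' \prec_L i$ and $h \prec h'$ (or even $h \preceq h'$ in the branch ordering on introduction rules), and then observe that any cycle in $\prec$ can be ``pushed'' into a cycle all of whose links are $\prec_L$ by replacing each right-link step with the corresponding left-link step and a detour through $h'$; since the families on the branch introduction-rules are finitely many and the detours move strictly upward (or stay), the process terminates and collapses to a genuine left prehistoric cycle $p_{j_0} \prec_L \cdots \prec_L p_{j_{m-1}} \prec_L j_0$.

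The main obstacle I expect is the careful bookkeeping in that ``pushing'' step: making precise, via the correspondence relation and the exact shape of the $(\supset \GassBox)$ rule in G3s (with its weakening formulas $\Gamma', \Delta'$ and its restricted side formulas $\GassBox\Gamma, \GassDiamond\Delta$), that a right-prehistoric occurrence of $\boxplus_h$ inside the active formula $A$ is genuinely the \emph{output} of an earlier $(\supset\GassBox)$ rule on the same branch, and that this earlier rule's premise exhibits the previous cycle-family on its left. One has to be careful that correspondence is reflexive-transitive but ``having a copy of a symbol'' is not, and that the $\boxplus_h$ we track does not get absorbed as a contraction formula or slip onto the wrong side when a $(\to \supset)$ or $(\supset \to)$ rule is crossed --- polarity preservation (the remark after the polarity definition that every rule respects polarities) is what rules this out, and I would lean on it explicitly.
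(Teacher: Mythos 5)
The paper itself states this lemma without proof (it is imported from Yu), so there is no in-paper argument to compare against; judged on its own, your sketch has the right target (eliminate the $\prec_R$ links) but rests on claims that are false. First, the ``key structural fact'' that a $\boxplus_h$ occurring on the right of $I_{i,j}$ must be introduced by some $(\supset \GassBox)$ rule $R_{h,j'}$ sitting above $O_{i,j}$ on that branch does not hold: being a \emph{principal} family is a global property --- some occurrence of the family, possibly in the subtree of a different branch or of a different rule instance $R_{i,j''}$, is introduced by $(\supset \GassBox)$ --- whereas the particular occurrence inside $A$ in $I_{i,j}$ may be discarded as a weakening/context formula higher up or sit inertly in an axiom leaf without ever becoming principal. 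Your argument only rules out an introduction \emph{below} $O_{i,j}$. Second, the packaged lemma ``$h \prec_R i$ implies there is $h'$ with $h \prec h' \prec_L i$'' fails already when the premise of the $(\supset \GassBox)$ rule for $p_i$ has empty antecedent: e.g.\ in the natural proof of $\GassBox(\GassBox Q \lor \lnot\GassBox Q)$ the positive family of $\GassBox Q$ is right-prehistoric to the outer family, yet no family at all is left-prehistoric to it. Finally, the step that ``traces'' $\boxplus_{i_{k-1}}$ from the premise of one introduction rule of $p_{i_k}$ into the premise of a \emph{different} introduction rule $R_{i_k,j'}$ is unsound: distinct rule instances introducing the same family have unrelated contexts and need not be comparable in the tree, and an occurrence on the right of one sequent never ``lands on the left'' of another, precisely because polarity and sidedness are preserved.

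The workable route goes through the subformula property rather than branch-tracing. In G3s corresponding $\GassBox$-occurrences always box the same formula, so every occurrence of a principal family $p_h$ is an occurrence of one fixed annotated formula $\boxplus_h \widehat C_h$, and $h \prec_R i$ holds exactly when $\boxplus_h$ occurs in $\widehat C_i$. Hence $\prec_R$ is transitive and irreflexive, so no cycle consists of $\prec_R$ links only; and $m \prec_R h$ together with $h \prec_L i$ yields $m \prec_L i$, since the left-hand occurrence of $\boxplus_h \widehat C_h$ witnessing $h \prec_L i$ already contains $\boxplus_m$. Collapsing each maximal run of $\prec_R$-links into the $\prec_L$-link that follows it then turns any prehistoric cycle into a left prehistoric cycle. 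Your instinct to lean on polarity preservation is right, but it must be applied to the fixed formula $\widehat C_h$ carried by the whole family, not to a single occurrence followed along a single branch.
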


Finally, Yu~\cite{yu2010} showed that left prehistoric cycles are necessary for self-referentiality

\begin{theorem}[Necessity of Left Prehistoric Cycle for
Self-referentiality] If a S4-theorem \(A\) has a
left-prehistoric-cycle-free G3s-proof, then there is a LP-formula \(B\)
s.t. \(B^\circ = A\) and \(\LP(\CS^\odot ) \vdash  B\) \end{theorem}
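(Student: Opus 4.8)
The plan is to prove the contrapositive-in-construction form: given a left-prehistoric-cycle-free G3s-proof $T$ of $\supset A$, build a realization function $r_T$ together with a constant specification $\CS$ that is not self-referential, such that $\LP(\CS)\vdash B$ where $B := r_T(\an_T(\supset A))$ with $B^\circ = A$. This is a refinement of Artemov's original constructive realization proof (cited above), so the skeleton is standard; the new content is tracking which constants get introduced into which terms and checking the resulting $\CS$ avoids cyclic self-reference. First I would set up the machinery exactly as in the realization theorem: normal families (negative and non-principal positive) are assigned distinct fresh variables once and for all, and these are never touched again. The work is entirely in the principal positive families $p_0,\dots,p_{n_p}$.

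Next I would order the principal families by the left-prehistoric relation. Since $T$ is left-prehistoric-cycle-free, $\prec_L$ restricted to principal families is acyclic, hence extends to a strict linear order; relabel so that $p_0,\dots,p_{n_p}$ is a topological order, i.e. $h \prec_L i$ implies $h < i$. I would then process the principal families in this order. For each $p_i$, walk up the proof tree and, at each $(\supset\GassBox)$ rule $R_{i,j}$ introducing an occurrence of $p_i$, collect the realizing terms flowing out of the premise $I_{i,j}$; as in the Artemov construction one forms, for each $R_{i,j}$, a term $t_{i,j}$ obtained by applying (via $\cdot$), summing (via $+$), and prefixing $!$ as dictated by the subproof of $I_{i,j}$ and the inductively-built realizations of the families occurring there, then sets $r_T(\boxplus_i) := t_{i,0} + \dots + t_{i,l_i-1}$ (suitably threaded through the other rules so Application/Sum/Proof-Checker steps go through, together with fresh constants $c$ attached by Axiom Necessitation for the propositional-tautology and axiom leaves). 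The crucial point, which Lemma~\ref{global} gives us, is that the only principal families whose realizing terms can appear inside $t_{i,j}$ — i.e. inside $r_T(\boxplus_i)$ — are those $p_h$ with $\boxplus_h$ occurring in a prehistory of $p_i$, and for the \emph{left} occurrences this is exactly $h \prec_L i$, hence $h < i$ by the topological ordering. (Right occurrences sit under a $\boxplus_h$ in the succedent of $I_{i,j}$; these are handled by the standard device of pushing them outside the scope, so they do not force $r_T(\boxplus_h)$ into $r_T(\boxplus_i)$ — or, if one keeps the cruder bookkeeping, one argues the cycle-freeness of $\prec$, not just $\prec_L$, using the lemma that a prehistoric cycle yields a left prehistoric cycle.)

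From this the non-self-referentiality of $\CS$ follows. Each constant $c$ introduced by Axiom Necessitation is attached to an axiom instance $c{:}D$ where $D$ is built from subformulas of $A$ with $\GassBox$'s replaced by already-constructed terms; the terms occurring in $D$ are the realizing terms of families strictly below the current one in the processing order (for the term being constructed at stage $i$, only $r_T(\boxplus_h)$ with $h<i$), plus the fixed variables. In particular $c$ itself — a brand-new constant at stage $i$ — cannot occur in $D$, so $\CS$ is not directly self-referential; and more generally, if we order the constants by the stage at which they are born, any constant occurring in the formula $D$ of $c{:}D$ was born at an earlier or equal stage but, being a proper subterm fed in from below, strictly earlier, which rules out any cycle $c_0{:}A(c_1),\dots,c_{n-1}{:}A(c_0)$. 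Finally $B^\circ = A$ is immediate from the definitions of $\an_T$ and forgetful projection, and $\LP(\CS)\vdash B$ is exactly the correctness half of the Artemov-style realization construction applied branch by branch.

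The main obstacle is the right-prehistoric families. The theorem's hypothesis only forbids \emph{left} prehistoric cycles, yet the naive dependency analysis of the realization construction seems to care about both left and right occurrences in $I_{i,j}$. The fix is to show that right prehistoric occurrences do not create genuine term-dependencies: an occurrence of $\boxplus_h$ in the succedent of a premise of a $(\supset\GassBox)$ introducing $p_i$ is positive and, in the realization, lives in a term that can be moved up past the $(\supset\GassBox)$ rule (the standard "positive occurrences can be realized last / pulled out" manoeuvre in realization proofs), so it never gets absorbed into $r_T(\boxplus_i)$. Making this precise — i.e. verifying that the modified construction still validates every rule of G3s when we refuse to substitute right-prehistoric terms inward — is where the real care is needed, and it is the step I expect to occupy the bulk of the actual proof; everything else is bookkeeping on top of the known realization algorithm. (The paper's own disclaimer suggests this is exactly the delicate point where the original argument is flawed.)
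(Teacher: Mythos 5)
The paper does not prove this theorem itself; it only states it and cites Yu. Measured against Yu's actual argument, your skeleton is the right one: run Artemov's constructive realization, process the principal families in a topological order induced by the prehistoric relation, and observe that a constant born while handling $R_{i,j}$ can only justify a formula whose terms realize families prehistoric to $p_i$, so acyclicity forces a non-self-referential $\CS$.

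The genuine soft spot is your primary treatment of right-prehistoric families. The device of ``pushing positive occurrences outside the scope'' so that they never get absorbed into $r_T(\boxplus_i)$ does not address the actual danger: the issue is not whether $r_T(\boxplus_h)$ ends up \emph{inside the term} $r_T(\boxplus_i)$, but whether it ends up \emph{inside the formulas} $D$ of the constants $c{:}D$ introduced by the lifting lemma at $R_{i,j}$. Since the formula being lifted is the realization of the succedent $A$ of $I_{i,j}$, every right-prehistoric $r_T(\boxplus_h)$ genuinely occurs in those axiom instances and cannot be moved out. The correct resolution is the one you relegate to a parenthesis: invoke the lemma, stated in the paper immediately before the theorem, that a proof has a prehistoric cycle iff it has a left prehistoric cycle. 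Left-prehistoric-cycle-freeness therefore already gives acyclicity of the full relation $\prec$; you topologically order by $\prec$ rather than $\prec_L$, both left and right dependencies then point strictly downward, and your constant-bookkeeping argument closes without any special manoeuvre. One further detail to make explicit: the formulas justified by constants contain provisional variables that are substituted only later in the construction, so non-self-referentiality must be verified on the final $\CS$ after all substitutions; the topological order is exactly what guarantees that these substitutions import only constants of strictly earlier stages. Finally, the paper's disclaimer does not concern this theorem --- it concerns the paper's new sufficiency results --- so it should not be read as evidence that the right-prehistoric step is where a flaw lies.
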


\section{Prehistoric Relations in
G3lp}\label{prehistoric-relations-in-g3lp}

\sectionmark{Prehist. Rel. in G3lp}

\subsection{Cut Rules}\label{cut-rules}

In this section we will prepare our discussion of prehistoric relations
for LP, by first expanding the notion of families and prehistoric
relations to the systems G3s + (Cut) and G3s + (\GassBox Cut) using cut rules.
The (context sharing) cut rule has the following definition:

\begin{definition}[(Cut) Rule]

\AXC{$\Gamma  \supset  \Delta , A$}
\AXC{$A, \Gamma  \supset  \Delta $}
\RightLabel{(Cut)}
\BIC{$\Gamma  \supset  \Delta $}
\DP

\end{definition}

It is necessary to expand the definition of correspondence (def.
\ref{corr}) to (Cut) rules as follows:

\begin{definition}[Correspondence for (Cut)]
%
  The active formulas (and their symbols) in the premises of a (Cut)
  rule correspond to each other.
%
\end{definition}

The classification and annotations for families of \(\GassBox \) do not carry
over to G3s + (Cut), as the (Cut) rule uses the cut formula in different
polarities for the two premises. We therefore will consider \emph{all}
\(\GassBox \) families for prehistoric relations in G3s + (Cut) proofs. This
leads to the following expanded definition of prehistoric relation:

\begin{definition}[Local Prehistoric Relation in G3s + (Cut)]
\label{local2} A family \(\GassBox _i\) has a \emph{prehistoric relation} to
another family \(\GassBox _j\), in notation \(i \prec  j\), if there is a \((\supset  \GassBox )\)
rule introducing an occurrence of \(\GassBox _j\) with premise \(S\), such that
there is an occurrence of \(\GassBox _i\) in \(S\). \end{definition}

Notice that there can be prehistoric relations with \(\GassBox \) families which
locally have negative polarity, as the family could be part of a cut
formula and therefore also occur with positive polarity in the other
branch of the cut. On the other hand, adding prehistoric relations with
negative families in a cut free G3s proof does not introduce prehistoric
cycles, as in G3s a negative family is never introduced by a \((\supset  \GassBox )\)
rule and therefore has no prehistoric families itself. In G3s + (Cut)
proofs, the subformula property and therefore also lemma \ref{global} no
longer hold. That means we can have an occurrence of a family \(\GassBox \) as
part of a cut formula in the \emph{global} prehistory of a \((\supset  \GassBox )\)
rule, which by the \emph{local} definition \label{defcut} is not a local
prehistoric family.

To handle terms \(s\cdot t\) in the next section an additional rule for modus
ponens under \(\GassBox \) is necessary. We therefore introduce here the new rule
(\GassBox Cut) as follows:

\begin{definition}[(\GassBox Cut) Rule]
\[
\AXC{$\Gamma  \supset  \Delta , \GassBox A, \GassBox B$}
\AXC{$\Gamma  \supset  \Delta , \GassBox (A \to  B), \GassBox B$}
\RightLabel{(\GassBox Cut)}
\BIC{$\Gamma  \supset  \Delta , \GassBox B$}
\DP
\]
\end{definition}

Again it is also necessary to expand the definition of correspondence
(def.~\ref{corr}) for this rule:

\begin{definition}[Correspondence for (\GassBox Cut)] \label{boxcutcorr}
\hfill
\begin{itemize}
\item
  The topmost \(\GassBox \) occurrence in the active formulas and the principal
  formula correspond to each other.
\item
  The subformulas \(A\) in the active formulas of the premises
  correspond to each other.
\item
  The subformulas \(B\) correspond to each other.
\end{itemize}

\end{definition}

Notice that with this expansion \(\GassBox \) occurrences of the same family no
longer are always part of the same subformula \(\GassBox C\).
Also similar to the (Cut) rule,
correspondence is expanded to relate negative and positive occurrences
of \(\GassBox \) symbols as \(A\) is used with different polarities in the two
premises.

With the following lemmas and theorems we will establish a constructive
proof for
\[\Gs + (\GassBox \Cut) \vdash  \Gamma  \supset  \Delta  \GassRight  \Gs + (\Cut) \vdash  \Gamma  \supset  \Delta  \GassRight  \Gs \vdash  \Gamma  \supset  \Delta.\] 
Moreover
there will be corollaries showing that the constructions do not
introduce prehistoric cycles by the new definition \ref{local2}. As all
prehistoric relations by the first definition \ref{local1} are included
in the new definition, the final proof in G3s will be
prehistoric-cycle-free by any definition if the original proof in \mbox{G3s +(\GassBox Cut)} 
was prehistoric-cycle-free by the new definition.


We need the following standard result, which we mention without proof.
\begin{lemma}\label{l:contraction:1}
Weaking, inversion, and contraction are admissible in G3s.
Moreover,  for any annotation the
constructed proofs do not introduce any new prehistoric relations.
\end{lemma}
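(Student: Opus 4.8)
The plan is to prove the three admissibility statements by the standard simultaneous induction and then track what each transformation does to $\GassBox$ occurrences. For weakening, I would proceed by induction on the height of the given derivation of $\Gamma \supset \Delta$, in each step adding the weakening formula to every sequent on the path from the root upward; since $\GassBox \Gamma$ and $\GassDiamond \Delta$ contexts in the rules $(\supset \GassBox)$ and $(\GassDiamond \supset)$ are only constrained from below (they are ``repeated multisets'' carried downward), adding fresh side formulas never blocks a rule application — in the worst case the newly added formulas become weakening formulas $\Gamma', \Delta'$ of those rules. For inversion, I would again induct on height, in each case commuting the inverted connective past the last rule or, when it is principal, reading off the premise. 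For contraction, the usual G3-style argument using inversion handles all cases except the two rules with an implicit contraction built in, $(\GassBox \supset)$ (and, once relevant, $(:{\supset})$): there one contracts a duplicated $\GassBox A$ by noticing the premise already carries $A, \GassBox A$, so the contracted instance is obtained by reusing the same premise.

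Next I would argue the claim about prehistoric relations. The key observation is that each of these three transformations acts on a derivation by (i) possibly adding new side-formula occurrences to existing sequents (weakening), (ii) deleting a connective layer and passing to a premise (inversion), or (iii) identifying two occurrences and possibly merging subtrees (contraction), but in no case does it create a new $(\supset \GassBox)$ rule application, nor does it move an existing $\GassBox$ occurrence from outside the premise of a $(\supset \GassBox)$ rule to inside it. For weakening with a $\GassBox$-free (more generally, arbitrary) annotated formula, any $\GassBox$ occurrence it introduces is labelled by the annotation we were handed, and the set of premises of $(\supset \GassBox)$ rules is unchanged, so $\prec$ does not grow. For inversion and contraction, every sequent occurring in the output proof is (a copy of) a sequent of the input proof, and every $(\supset \GassBox)$ rule in the output corresponds to one in the input with the same or a smaller premise context; hence every prehistoric relation $i \prec j$ holding in the output already held in the input. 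This is exactly what is needed for the intended application: no new relations means no new cycles.

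The main obstacle is bookkeeping the annotation through contraction: when two $\GassBox A$ occurrences are contracted, they must belong to the same family for the statement ``for any annotation'' to even make sense, and one has to check that the resulting proof is still a legitimately annotated proof — i.e. that correspondence is preserved when a subtree is pruned or when a premise is reused. I would handle this by fixing the annotated input proof first, performing contraction only on occurrences that are already identified by the family relation (which is forced, since the two $\GassBox A$'s are corresponding occurrences in the sequent being contracted), and then checking directly from Definition~\ref{corr} that the surviving subtree still satisfies the correspondence clauses. A secondary, milder subtlety is that inversion for $(\supset \GassBox)$ is not available as a naive rule (one cannot invert into $\GassBox \Gamma \supset \GassDiamond \Delta, A$ in general), but this rule's principal formula is a boxed formula on the right, and the only inversions we actually need are for $\to$ on both sides and the propositional connectives, all of which commute cleanly with $(\supset \GassBox)$ treating the inverted formula as a side or weakening formula; so this does not cause real trouble.
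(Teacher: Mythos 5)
Your proposal is sound in substance, but note that the paper does not prove this lemma at all: it is explicitly ``mentioned without proof'' as a standard result, accompanied only by a remark that the weakening half of the prehistoric-relations claim also follows from Fact 2.8 in Yu's 2017 paper via his notion of \emph{isolated} families (occurrences introduced by weakening have no prehistoric families of their own, hence cannot contribute new relations when further correspondences are added going down the tree). Your argument is therefore more self-contained and arguably better suited to the paper's global reading of prehistoric relations: instead of importing Yu's locally updated prehistoric graph, you observe directly that none of the three transformations creates a new $(\supset \GassBox)$ application or pushes a $\GassBox$ occurrence into the premise of an existing one --- for weakening because the added formula can always be absorbed into the weakening contexts $\Gamma'$, $\Delta'$ of $(\supset \GassBox)$ and $(\GassDiamond \supset)$ rather than propagated into their premises, and for inversion and contraction because every sequent of the output proof is a decomposed or thinned copy of a sequent of the input, so the set of families occurring in any $(\supset \GassBox)$-premise can only shrink. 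This is exactly what the later corollaries on cut elimination need. One caveat: your claim that the two contracted occurrences of $\GassBox A$ are ``forced'' to correspond is not right as stated --- Definition~\ref{corr} never identifies two parallel side-formula occurrences within one sequent --- so their membership in a common family is an assumption on the chosen annotation, not a consequence of correspondence. The paper's remark immediately following the lemma (that the annotation is arbitrary and families may have several occurrences in the root sequent, so the lemma applies with the ambient annotation of a larger tree) is precisely the device that makes this assumption harmless wherever contraction is actually invoked, namely after duplication in the cut-elimination arguments; your proof should lean on that rather than on correspondence.
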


It is important to note, that the second part of the lemma is not
restricted to the annotations \(\an_T\) of the proofs \(\GassT  = (T, R)\)
given by the premise of the lemma but still hold for arbitrary
annotations \(\an\). That means there is no implicit assumption that the
families have only a single occurrence in the root sequents used in the
lemma or theorem and the results can also be used in subtrees \(T\uhp S\)
together with an annotation \(\an_T\) for the complete tree.

In the case for weakening, the second part of the lemma also follows from the fact 2.8 in Yu~\cite{yu2017}. 
There, Yu looks at
prehistoric relations locally, i.e.~taking only correspondence up to the
current sequent in consideration. That means the graph of prehistoric
relations has to be updated going up the proof tree as new rules add new
correspondences and therefore unify vertices in the prehistoric
relations graph which were still separate in the premise. To work with
such changing graphs, Yu introduces the notion of isolated families. He
shows that all \(\GassBox \) occurrences introduced by weakening are isolated.
That means they have no prehistoric relations themselves, which globally
means that they can not add any prehistoric relations from adding
correspondences later in the proof. This is exactly what the second part asserts for weakening.



\begin{theorem}[Cut Elimination for G3s] \label{cut} If
\[\Gs \vdash  \Gamma  \supset  \Delta , A \text{ and } \Gs \vdash  A, \Gamma  \supset  \Delta, \] 
then \(\Gs \vdash  \Gamma  \supset  \Delta \).
\end{theorem}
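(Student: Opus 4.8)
The plan is to prove cut elimination for G3s by the standard Gentzen-style double induction, the novelty being that we must also verify---at every reduction step---that the second clause of Lemma~\ref{l:contraction:1} is respected, i.e.\ that no reduction introduces a prehistoric relation (by Definition~\ref{local2}, i.e.\ for arbitrary $\GassBox$ families) that was not already present in the two premises together with the new correspondences forced by the (Cut) that is being eliminated. Concretely, I would set up the induction on the pair $(|A|, h)$ ordered lexicographically, where $|A|$ is the size of the cut formula and $h$ is the sum of the heights of the two subproofs of the premises; the primary case split is on which rules produce the two premises.

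First I would dispatch the \emph{trivial} cases: if either premise is an axiom $(Ax)$ or $(\bot\supset)$, or if the cut formula is not principal in one of the premises (a \emph{commutative} case), then either the conclusion is already an axiom, or we permute the (Cut) upward past the last rule of that premise, applying the induction hypothesis with smaller $h$; admissibility of weakening and inversion from Lemma~\ref{l:contraction:1} is used to adjust contexts, and the ``no new prehistoric relations'' clause of that lemma covers those auxiliary steps. Then the \emph{principal} cases, where $A$ is principal in both premises: for $A = B \to C$ reduce to two cuts on $B$ and $C$, each of smaller cut-formula size; for $A = \GassBox B$ the premises end in $(\supset \GassBox)$ on the left and $(\GassBox\supset)$ on the right, and the standard reduction yields a cut on $B$ (smaller size) after suitable weakenings, plus possibly a cut on $\GassBox B$ again against the premise of the $(\GassBox\supset)$ but now with smaller height---this is exactly the place where the two induction parameters interact. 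Contraction (admissible, Lemma~\ref{l:contraction:1}) is needed when the cut formula occurs with multiplicity.

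\emph{The main obstacle} is the prehistoric bookkeeping in the $\GassBox$-principal case. When we cut $\GassBox B$ (introduced by $(\supset\GassBox)$ from $\GassBox\Gamma \supset \GassDiamond\Delta, B$) against $(\GassBox\supset)$, the families occurring inside $B$ and inside the boxed context $\GassBox\Gamma$ get moved into new positions, and in particular a $\GassBox$ family that lived only in the left premise can end up in the premise of a $(\supset\GassBox)$ rule further up the \emph{right} subproof, or vice versa. I must argue that every prehistoric relation $i \prec j$ in the resulting G3s proof is already witnessed in one of the two premises \emph{under the annotation $\an_T$ of the whole original tree}, using the expanded correspondence for (Cut) (Def.~following Theorem~\ref{cut}'s setup), which already identifies the two occurrences of the cut formula's symbols across the premises. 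The key sub-claim is: the $(\supset\GassBox)$ rules surviving in the reduced proof are (copies of) $(\supset\GassBox)$ rules of the premises, their premises' $\GassBox$-contents are subsets of what was there before plus symbols identified by correspondence with the cut formula, and symbols introduced purely by the auxiliary weakening/contraction/inversion steps are isolated by Lemma~\ref{l:contraction:1}. Assembling this uniformly across all reduction cases---and being careful that the ``arbitrary annotation'' strength of Lemma~\ref{l:contraction:1} lets us apply it to subtrees---is the bulk of the work; the purely logical cut-elimination argument itself is entirely routine.
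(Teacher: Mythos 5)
There is a genuine gap, and it sits exactly at the case the paper singles out as ``the only interesting case.'' Your case analysis treats every situation in which the cut formula is non-principal in a premise as a commutative case, to be handled by ``permuting the (Cut) upward past the last rule of that premise'' with weakening and inversion adjusting the contexts. This fails when the last rule of the right premise is $(\supset\GassBox)$ and the cut formula $A=\GassBox A_0$ occurs as a \emph{boxed side formula in its antecedent}, i.e.\ the right premise ends in a $(\supset\GassBox)$ inferring $\Gamma'_R,\GassBox A_0,\GassBox\Gamma_R\supset\Delta',\GassBox B$ from $\GassBox A_0,\GassBox\Gamma_R\supset B$. To permute the cut above this rule you would need the left cut premise in the restricted context $\GassBox\Gamma_R\supset B,\GassBox A_0$, and there is no way to get there from $\Gamma\supset\Delta,\GassBox A_0$ by weakening or inversion: you would have to delete $\Delta$ and the non-boxed part of $\Gamma$, which is not admissible. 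This is precisely the 4-style difficulty that makes S4 cut elimination non-routine. The paper's resolution is to first arrange (by the genuinely commutative steps) that $\GassBox A_0$ is principal on the left, so that $\GassT_L$ ends in $(\supset\GassBox)$ with premise $\GassBox\Gamma_L\supset A_0$; then one applies a \emph{fresh} $(\supset\GassBox)$ to that premise to derive $\GassBox\Gamma_R,\GassBox\Gamma_L\supset B,\GassBox A_0$, cuts on $\GassBox A_0$ against the weakened $\GassBox A_0,\GassBox\Gamma_R,\GassBox\Gamma_L\supset B$ (same rank, smaller right depth), and only then re-applies $(\supset\GassBox)$ and contracts. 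Your proposal contains no analogue of this step; instead you locate the critical interaction of the two induction parameters in the $(\supset\GassBox)$-versus-$(\GassBox\supset)$ principal case, which is in fact the routine T-style case.

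This mislocation also undermines your prehistoric bookkeeping. Your key sub-claim that ``the $(\supset\GassBox)$ rules surviving in the reduced proof are (copies of) $(\supset\GassBox)$ rules of the premises'' is false for the paper's construction: the reduction above introduces a genuinely new $(\supset\GassBox)$ whose premise merges $\GassBox\Gamma_L$ and $\GassBox\Gamma_R$, and this does create new prehistoric relations. The correct (and weaker) invariant is the one in Corollary~\ref{cutprehist}: every new relation $i\prec j$ is mediated by a family $k$ occurring in the cut formula with $i\prec k\prec j$ in the original proof, which suffices to preserve cycle-freeness but is not ``no new relations.'' Note also that the preservation claims are stated in the paper as corollaries to the theorem, not as part of the cut-elimination induction itself, so you are free to separate the two concerns --- but you must first get the reduction in the boxed-side-formula case right, since that reduction is both where cut elimination could fail and where the new prehistoric relations actually come from.
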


\begin{proof}

By a simultaneous induction over the depths of the proof trees \(\GassT _L\)
for \(\Gamma  \supset  \Delta , A\) and \(\GassT _R\) for \(A, \Gamma  \supset  \Delta \) as well as the rank of
\(A\).
For us, the only interesting case is:

\(A\) is a side formula in the last rule of \(\GassT _R\), which is
a \((\supset  \GassBox )\) rule and a principal formula in the last rule of \(\GassT _L\).
Then \(A\) has the form \(\GassBox A_0\) as it is a side formula of a \((\supset  \GassBox )\)
on the right. So the last rule of \(\GassT _L\) is also a \((\supset  \GassBox )\) rule and
the proof has the following form:

\AXC{$\GassT _L$} \noLine \UIC{$\GassBox \Gamma _L \supset  A_0$} \RightLabel{$(\supset  \GassBox )$}
\UIC{$\Gamma '_L, \GassBox \Gamma _L \supset  \Delta ', \GassBox B, \GassBox A_0$}

\AXC{$\GassT _R$} \noLine \UIC{$\GassBox A_0, \GassBox \Gamma _R \supset  B$} \RightLabel{$(\supset  \GassBox )$}
\UIC{$\Gamma '_R, \GassBox A_0, \GassBox \Gamma _R \supset  \Delta ', \GassBox B$}

\RightLabel{(Cut)}
\BIC{$\Gamma  \supset  \Delta ', \GassBox B$}
\DP

where \(\Delta  = \Delta ', \GassBox B\) and \(\Gamma  = \Gamma '_L, \GassBox \Gamma _L = \Gamma '_R, \GassBox \Gamma _R\).

The cut can be moved up on the right using weakening as follows:

\AXC{$\GassT _L$} \noLine \UIC{$\GassBox \Gamma _L \supset  A_0$} \RightLabel{$(\supset  \GassBox )$}
\UIC{$\GassBox \Gamma _R, \GassBox \Gamma _L \supset  B, \GassBox A_0$}

\AXC{$\GassT '_R$} \noLine \UIC{$\GassBox A_0, \GassBox \Gamma _R, \GassBox \Gamma _L \supset  B$}

\RightLabel{(Cut)}
\BIC{$\GassBox \Gamma _R, \GassBox \Gamma _L \supset  B$}

\RightLabel{$(\supset  \GassBox )$}
\UIC{$\Gamma , \GassBox \Gamma _R, \GassBox \Gamma _L \supset  \Delta ', \GassBox B$}
\DP

By the induction hypothesis and a contraction we get the required proof
for \(\Gamma  \supset  \Delta \) as \(\GassBox \Gamma _L \subseteq  \Gamma \) and \(\GassBox \Gamma _R \subseteq  \Gamma \).
\end{proof}

\begin{corollary} \label{cutprehist} For any annotation \(\an\) the
constructed proof for \(\Gamma  \supset  \Delta \) only introduces new prehistoric
relations \(i \prec  j\) between families \(\GassBox _i\) and \(\GassBox _j\) occurring in
\(\Gamma  \supset  \Delta \) where there exists a family \(\GassBox _k\) in \(A\) such that
\(i \prec  k \prec  j\) in the original proof. \end{corollary}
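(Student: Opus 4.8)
The plan is to prove the corollary by the very same simultaneous induction on the depths of $\GassT_L$, $\GassT_R$ and the rank of $A$ that underlies Theorem~\ref{cut}, strengthening the statement to the induction hypothesis that \emph{every cut-eliminated subproof introduces, relative to the sub-cut it eliminates, only new prehistoric relations $i' \prec j'$ between families occurring in the conclusion of that sub-cut for which there is a family $k'$ in the corresponding sub-cut formula with $i' \prec k'$ and $k' \prec j'$ in the original subproof}. Since the corollary speaks of \emph{any} annotation $\an$, this strengthening is legitimate in the subtrees, by the same remark that follows Lemma~\ref{l:contraction:1}: the annotation of the whole tree is simply carried along the construction, each \GassBox{} occurrence of the new proof being a copy of one of the original proof and inheriting its family.

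For all cases of Theorem~\ref{cut} other than the displayed one, the new proof is assembled from cut-eliminated subproofs by weakening, inversion and contraction --- which add no prehistoric relations, by Lemma~\ref{l:contraction:1} --- together with re-applications of rules that already occur in $\GassT_L$ or $\GassT_R$ below their respective cut formulas. Among all rules only $(\supset\GassBox)$ creates a prehistoric relation, and its re-application introduces $i \prec j$ only for families $i$ occurring in its premise and $j$ the family of the introduced \GassBox{}; such a relation already holds in the original proof, since the same $(\supset\GassBox)$ rule occurs there over a premise whose \GassBox{}-families include $i$. Combined with the induction hypothesis --- applied to the sub-cuts, whose cut formulas are subformulas of $A$, so that every intermediary $k'$ they provide is a family in $A$ --- this disposes of these cases.

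The one case that genuinely contributes is the displayed permutation of the cut on $A = \GassBox A_0$ past two $(\supset\GassBox)$ rules. Write $k$ for the family of the topmost \GassBox{} of $\GassBox A_0$; by the correspondence rule for (Cut) the two side-by-side occurrences of $\GassBox A_0$ in the original proof lie in the same family $k$ (even though they have opposite local polarity), and $k$ is a family in $A$. The new proof first performs a shallower cut on $\GassBox A_0$ with premises $\GassBox\Gamma_R, \GassBox\Gamma_L \supset B, \GassBox A_0$ and $\GassBox A_0, \GassBox\Gamma_R, \GassBox\Gamma_L \supset B$: by the induction hypothesis its elimination introduces only relations chained through a family of $\GassBox A_0 = A$, the allowed form. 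It then re-applies $(\supset\GassBox)$, introducing $\GassBox B$ --- say of family $j$ --- over the premise $\GassBox\Gamma_R, \GassBox\Gamma_L \supset B$, thereby creating $i \prec j$ for every family $i$ occurring in that sequent. If $i$ is a family of $\GassBox\Gamma_R$ or of $B$, then $i \prec j$ is not new: the original right-hand $(\supset\GassBox)$ has premise $\GassBox A_0, \GassBox\Gamma_R \supset B$, in which $i$ occurs. If $i$ is a family of $\GassBox\Gamma_L$ the relation is new, but in the original proof the left-hand $(\supset\GassBox)$ has premise $\GassBox\Gamma_L \supset A_0$, so $i \prec k$, while the right-hand $(\supset\GassBox)$ has premise $\GassBox A_0, \GassBox\Gamma_R \supset B$ containing $\GassBox A_0$, so $k \prec j$; hence $i \prec k$ and $k \prec j$ with $k$ a family in $A$, as required. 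The concluding weakening and contraction add nothing, again by Lemma~\ref{l:contraction:1}.

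The main obstacle I expect is the bookkeeping rather than any single hard idea: one must check that the intermediary $k$ is literally a family of $A$ (this rests on (Cut)-correspondence identifying the occurrences of $\GassBox A_0$ across the two premises), that both $i \prec k$ and $k \prec j$ are read off the \emph{original} proof, and that a single intermediary always suffices --- no iterated $\prec k \prec$ chaining is needed, because the top \GassBox{} of the cut formula serves as that single intermediary in every subcase, and the relations supplied by the induction hypothesis on the sub-cuts are already of the one-intermediary form. A secondary point is to verify the variant of the displayed case obtained by swapping the roles of $\GassT_L$ and $\GassT_R$, and to confirm that the claim remains stable when the admissible transformations of Lemma~\ref{l:contraction:1} are applied inside subtrees equipped with the global annotation $\an_T$; both are routine once the displayed case is settled.
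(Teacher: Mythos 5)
Your proposal is correct and follows essentially the same route as the paper: the paper's proof likewise isolates the new $(\supset\GassBox)$ rule in the displayed case as the only source of new relations, notes that relations from $\GassBox\Gamma_R$ already exist via the original right-hand $(\supset\GassBox)$, and chains the genuinely new relations from $\GassBox\Gamma_L$ as $i \prec k$ (left-hand $(\supset\GassBox)$) and $k \prec j$ (occurrence of $\GassBox A_0$ in the right-hand premise), with $k$ the family of the cut formula. You simply make explicit the inductive bookkeeping for the remaining cases that the paper dismisses in one sentence.
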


\begin{proof}

The only place where new prehistoric relations get introduced is by
the new \((\supset  \GassBox )\) in the case shown above. 
All prehistoric relations from \(\GassBox \Gamma _R\)
are already present from the \((\supset  \GassBox )\) rule on the right in the original
proof. So only prehistoric relations from \(\GassBox \Gamma _L\) are new. For all
families \(\GassBox _i\) in \(\GassBox \Gamma _L\) we have \(i \prec  k\) for the family \(\GassBox _k\) in
the cut formula introduced by the \((\supset  \GassBox )\) rule on the left. Moreover
\(k \prec  j\) for the same family because of the occurrence of \(\GassBox A_0\) on
the right. \end{proof}

\begin{corollary} \label{cutcycle} For any annotation \(\an\) the
constructed proof for \(\Gamma  \supset  \Delta \) does not introduce prehistoric cycles.
\end{corollary}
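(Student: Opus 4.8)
The plan is to obtain this immediately from Corollary~\ref{cutprehist} by a short graph-theoretic argument. First I would fix an annotation $\an$ and introduce the directed graphs $G$ and $G'$ on the set of $\GassBox$-families, with an edge $(i,j)$ whenever $\GassBox_i \prec \GassBox_j$ in the sense of Definition~\ref{local2}, taken with respect to the original proof and to the constructed proof for $\Gamma \supset \Delta$ respectively. By Corollary~\ref{cutprehist}, every edge of $G'$ is either already an edge of $G$, or else is a new edge $(i,j)$ for which there is a family $\GassBox_k$ occurring in the cut formula $A$ with both $(i,k)$ and $(k,j)$ edges of $G$; note moreover that all families of $A$ are vertices of $G$, since $A$ occurs in both premises of the cut in the original proof.

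Next I would take an arbitrary prehistoric cycle of the constructed proof, i.e.\ a closed directed walk $i_0 \prec i_1 \prec \cdots \prec i_{n-1} \prec i_0$ of length $n \ge 1$ in $G'$ (so that even a single family with a relation to itself is covered), and rewrite it as a closed walk in $G$: every edge already present in $G$ is left unchanged, and every new edge $(i_l, i_{l+1})$ is replaced by the two-step path $i_l \prec k_l \prec i_{l+1}$ through the intermediate family $\GassBox_{k_l}$ supplied by Corollary~\ref{cutprehist}. This yields a closed directed walk of positive length in $G$, and by the elementary fact that a closed walk of positive length in a finite directed graph contains a directed cycle, $G$ contains a prehistoric cycle. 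Hence any prehistoric cycle in the constructed proof reflects one already present in the original proof, which is exactly the assertion that the construction does not introduce prehistoric cycles.

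The argument is essentially bookkeeping, and the only point I expect to require care is the consistent identification of $\GassBox$-families across the two proofs: the families of the constructed proof are among those of the original proof, possibly identified by the construction, and this is precisely why Corollary~\ref{cutprehist}, and hence the present corollary, is stated for an arbitrary fixed annotation $\an$ rather than only for $\an_T$, so that the same family-tracking remains valid in the subtrees involved. Beyond phrasing the graph-theoretic lemma carefully enough to handle spliced walks that repeat vertices, I do not anticipate any genuine obstacle.
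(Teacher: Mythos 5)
Your proposal is correct and follows essentially the same route as the paper: take a cycle in the constructed proof, use Corollary~\ref{cutprehist} to replace each new edge by a two-step detour through a family of the cut formula, and read off a cycle in the original proof. Your extra step of extracting a directed cycle from the spliced closed walk is harmless but not strictly needed, since the paper's definition of prehistoric cycle does not require the listed families to be distinct, so the closed walk itself already witnesses a prehistoric cycle.
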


\begin{proof}

Assume for contradiction that there exists a prehistoric cycle
\[i_0 \prec  \cdots \prec  i_{n-1} \prec  i_0\] in the new proof. By the previous lemma
for any prehistoric relation \[i_k \prec  i_{k+1 \mod n}\] in the cycle
either \(i_k \prec  i_{k+1 \mod n}\) in the original proof or there is a
family \(i'_k\) in the cut formula such that
\(i_k \prec  i'_k \prec  i_{k+1 \mod n}\) in the original proof. Therefore we also
have a prehistoric cycle in the original proof. \end{proof}

\begin{theorem}[(\GassBox Cut) Elimination] \label{boxcut} If
\[\Gs \vdash  \Gamma  \supset  \Delta , \GassBox A, \GassBox B \text{ and } \Gs \vdash  \Gamma  \supset  \Delta , \GassBox (A \to  B), \GassBox B,\] then
\(\Gs \vdash  \Gamma  \supset  \Delta , \GassBox B\) \end{theorem}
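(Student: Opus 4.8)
The plan is to reduce $(\GassBox\Cut)$ to the ordinary $(\Cut)$ rule, which is already shown to be admissible in G3s by Theorem~\ref{cut}. The key observation is that $\GassBox(A\to B)$ together with $\GassBox A$ yields $\GassBox B$ via the S4 reasoning $\GassBox(A\to B)\to(\GassBox A\to\GassBox B)$, so a single $(\supset\GassBox)$ step plus two cuts should suffice. Concretely, starting from the two given proofs of $\Gamma\supset\Delta,\GassBox A,\GassBox B$ and $\Gamma\supset\Delta,\GassBox(A\to B),\GassBox B$, I would first produce, by a short cut-free derivation, a proof of the sequent $\GassBox(A\to B),\GassBox A\supset\GassBox B$: apply $(\GassBox\supset)$ twice to unpack the two boxed antecedents, reach $A\to B,\GassBox(A\to B),A,\GassBox A\supset B$, close it with $(\to\supset)$ against an axiom on $A$ and an axiom on $B$, and then apply $(\supset\GassBox)$ (with $\Gamma=\{A\to B,A\}$ going inside the box as $\GassBox\Gamma$, and the outer copies of $\GassBox(A\to B),\GassBox A$ surviving as side formulas). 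This is the heart of the realization of the S4 application axiom.

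Next I would use weakening (Lemma~\ref{l:contraction:1}) to bring the contexts into alignment and then apply Theorem~\ref{cut} twice. From the proof of $\Gamma\supset\Delta,\GassBox(A\to B),\GassBox B$ and the proof of $\GassBox(A\to B),\GassBox A\supset\GassBox B$ (weakened to $\GassBox(A\to B),\GassBox A,\Gamma\supset\Delta,\GassBox B$), cut on the formula $\GassBox(A\to B)$ to obtain $\GassBox A,\Gamma\supset\Delta,\GassBox B$. Then from the proof of $\Gamma\supset\Delta,\GassBox A,\GassBox B$ (weakened appropriately) and this last sequent, cut on $\GassBox A$ to obtain $\Gamma\supset\Delta,\GassBox B$, possibly followed by a contraction on $\GassBox B$ and on the duplicated context, again by Lemma~\ref{l:contraction:1}. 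This gives $\Gs\vdash\Gamma\supset\Delta,\GassBox B$ as required.

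The part requiring care is the bookkeeping in the $(\supset\GassBox)$ step that builds $\GassBox(A\to B),\GassBox A\supset\GassBox B$: one must check that the premise of that rule really has the shape $\GassBox\Gamma'\supset\GassDiamond\Delta',B$ demanded by the rule schema, i.e.\ that every formula left of $\supset$ in the premise is boxed and everything else on the right is diamonded or empty — which holds here since the premise is $A\to B,\GassBox(A\to B),A,\GassBox A\supset B$ can be obtained by first $(\GassBox\supset)$-contracting down to $\GassBox(A\to B),\GassBox A\supset B$ whose only left formulas are boxed. A second delicate point, relevant for the corollary that will presumably follow, is tracking which new prehistoric relations the two cuts introduce; by Corollary~\ref{cutprehist} each cut on $\GassBox(A\to B)$ and on $\GassBox A$ only adds relations $i\prec j$ that factor through a family in the respective cut formula, and the single new $(\supset\GassBox)$ introducing $\GassBox B$ has in its premise exactly the families of $A$, $B$ and the box of $A\to B$, so no genuinely new cycle can appear — but spelling this out precisely is where the real work lies, and I would defer it to a corollary analogous to Corollaries~\ref{cutprehist} and~\ref{cutcycle}.
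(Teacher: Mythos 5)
Your derivation of the bare statement goes through, but by a genuinely different route than the paper's: you prove the K-sequent $\GassBox(A\to B),\GassBox A\supset\GassBox B$ separately and then perform two ordinary cuts on the \emph{boxed} formulas $\GassBox(A\to B)$ and $\GassBox A$, invoking Theorem~\ref{cut} twice. The paper instead runs a structural induction on the two given trees (weakening/side/principal cases) and, in the principal case, uses invertibility of $(\supset\to)$ to extract $A,\GassBox\Gamma_R\supset B$ from $\GassBox\Gamma_R\supset A\to B$, cuts on the \emph{unboxed} formula $A$, and closes with a single new $(\supset\GassBox)$ whose premise is $\GassBox\Gamma_L,\GassBox\Gamma_R\supset B$. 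As a pure derivability argument yours is fine (modulo the routine fact that $C,\Gamma\supset\Delta,C$ is derivable for non-atomic $C$, which you need for the axiom leaves on $A$ and $B$).

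The genuine problem is the point you defer at the end. This theorem exists in the paper only to feed Corollary~\ref{boxcutcycle}, and your claim that ``no genuinely new cycle can appear'' is false for your construction. By Definition~\ref{boxcutcorr}, the topmost $\GassBox$ occurrences of $\GassBox A$, $\GassBox(A\to B)$ and $\GassBox B$ in a $(\GassBox\Cut)$ all belong to \emph{one} family, say $f$; the corollary must hold for the annotation inherited from the original proof, in which this identification is in force. Your auxiliary $(\supset\GassBox)$ step derives $\GassBox B$ from the premise $\GassBox(A\to B),\GassBox A\supset B$, and that premise contains occurrences of $f$ (the boxes of $\GassBox(A\to B)$ and $\GassBox A$, which your two cuts identify with the principal boxes of the original $(\supset\GassBox)$ rules). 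By Definition~\ref{local2} this yields $f\prec f$, a prehistoric cycle of length one that need not be present in the original proof. Corollary~\ref{cutprehist} does not rescue you here, because the offending relation is created by the new $(\supset\GassBox)$ in your hand-built K-derivation, not by the cut eliminations. This is precisely why the paper's proof inverts $(\supset\to)$ and cuts on $A$ rather than on $\GassBox A$: its new $(\supset\GassBox)$ has only $\GassBox\Gamma_L,\GassBox\Gamma_R$ in the premise, and every family there was already prehistoric to $f$ via the two original $(\supset\GassBox)$ rules. To make your route serve the paper's purpose you would have to restructure it so that no boxed copy of $A$ or $A\to B$ survives into the premise of the rule introducing $\GassBox B$ --- which essentially forces you back to the paper's construction.
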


\begin{proof}

By a structural induction over the proof trees \(\GassT _L\) for
\(\Gamma  \supset  \Delta , \GassBox A, \GassBox B\) and \(\GassT _R\) for \(\Gamma  \supset  \Delta , \GassBox (A \to  B), \GassBox B\).

1.~case: \(\GassBox (A \to  B)\) or \(\GassBox A\) is a weakening formula of the last rule.
Then removing them from that proof gives the required proof. This
includes the case when \(\GassBox B\) is the principal formula of the last rule
of either proof, as then the last rule is \((\supset  \GassBox )\) which has no side
formulas on the right.

2.~case: \(\GassBox (A \to  B)\) or \(\GassBox A\) is a side formula of the last rule. Then
also \(\GassBox B\) is a side formula of that rule. Use the induction hypothesis
on the premises of that rule with the other proof and append the same
rule.

3.~case: \(\GassBox (A \to  B)\) and \(\GassBox A\) are the principal formula of the last
rule. Then the last rules have the following form:

\AXC{$\GassT _L$} \noLine \UIC{$\GassBox \Gamma _L \supset  A$} \RightLabel{$(\supset  \GassBox )$}
\UIC{$\Gamma _L', \GassBox \Gamma _L  \supset  \Delta , \GassBox A, \GassBox B$}

\AXC{$\GassT _R$} \noLine \UIC{$\GassBox \Gamma _R \supset  A \to  B$} \RightLabel{$(\supset  \GassBox )$}
\UIC{$\Gamma '_R, \GassBox \Gamma _R  \supset  \Delta , \GassBox (A \to  B), \GassBox B$}

\RightLabel{(\GassBox Cut)}
\BIC{$\Gamma  \supset  \Delta , \GassBox B$}
\DP

where \(\Delta  = \Delta ', \GassBox B\) and \(\Gamma  = \Gamma '_L, \GassBox \Gamma _L = \Gamma '_R, \GassBox \Gamma _R\).

By inversion for \((\supset  \to )\) we get a proof \(\GassT '_R\) for \(A, \GassBox \Gamma _R \supset  B\)
from the first premise \(\GassBox \Gamma _R \supset  A \to  B\). Using weakening and a normal
cut on the formula \(A\) we get the following proof:

\AXC{$\GassT '_L$} \noLine \UIC{$\GassBox \Gamma _L, \GassBox \Gamma _R \supset  A$} \AXC{$\GassT ''_R$} \noLine
\UIC{$A, \GassBox \Gamma _L, \GassBox \Gamma _R  \supset  B$} \RightLabel{(Cut)} \BIC{$\GassBox \Gamma _L, \GassBox \Gamma _R \supset  B$}
\RightLabel{$(\supset  \GassBox )$} \UIC{$\Gamma , \GassBox \Gamma _L, \GassBox \Gamma _R \supset  \Delta , \GassBox B$} \DP

By contraction and a cut elimination we get the required G3s proof for
\(\Gamma  \supset  \Delta , \GassBox B\) as \(\GassBox \Gamma _L \subseteq  \Gamma \) and \(\GassBox \Gamma _R \subseteq  \Gamma \). \end{proof}

\begin{corollary} \label{boxcutcycle} For any annotation \(\an\) the
constructed proof for \(\Gamma  \supset  \Delta , \GassBox B\) does not introduce prehistoric
cycles. \end{corollary}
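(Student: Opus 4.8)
The plan is to follow the case analysis in the proof of Theorem~\ref{boxcut} and to check, step by step, that no step of the construction creates a prehistoric cycle absent from $\GassT_L$ and $\GassT_R$. Fix once and for all an annotation $\an$; by Definition~\ref{boxcutcorr}, applied to the $(\GassBox\Cut)$ inference that the construction eliminates, the $\GassBox$-occurrences heading $\GassBox A$, $\GassBox(A \to  B)$ and $\GassBox B$ are assigned one and the same family, which I call $j$. Two facts carry the whole argument: first, by Definition~\ref{local2} the only G3s rule that ever introduces a prehistoric relation is $(\supset  \GassBox )$, and the (Cut) rule does not; second, weakening, inversion and contraction introduce no prehistoric relation (Lemma~\ref{l:contraction:1}), while cut elimination (Theorem~\ref{cut}) introduces no prehistoric cycle (Corollary~\ref{cutcycle}, with the sharper bookkeeping of Corollary~\ref{cutprehist}). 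So it suffices to locate every $(\supset  \GassBox )$ inference of the constructed proof that was not already present in $\GassT_L$ or $\GassT_R$ and to show that the relations it produces are already present there.

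In Case~1 the constructed proof is $\GassT_L$ or $\GassT_R$ with a single isolated weakening-formula occurrence deleted from the root, so it contains no new $(\supset  \GassBox )$ inference at all, hence no new prehistoric relation and no new cycle. In Case~2 the constructed proof is obtained by applying the induction hypothesis to the premises of the last rule $\rho$ of $\GassT_L$ (or of $\GassT_R$) — possibly after structural manipulations on the other proof that, by Lemma~\ref{l:contraction:1}, add no relation — and re-appending $\rho$. The point is that the cut formula $\GassBox A$ (resp.\ $\GassBox(A\to B)$) occurs on the right as a \emph{side} formula of $\rho$, so $\rho$ cannot be $(\supset  \GassBox )$; within the minimal proof fragment $\rho$ is therefore one of $(\to  \supset )$, $(\supset  \to )$, $(\GassBox  \supset )$, none of which introduces a prehistoric relation, and the claim follows from the induction hypothesis.

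The work is in Case~3. The inversion of $(\supset  \to )$, the two weakenings, the embedded normal cut on $A$, and the concluding contraction add no prehistoric relation, by Lemma~\ref{l:contraction:1} and the first of the two facts above. The only genuinely new inference is the single $(\supset  \GassBox )$ with premise $\GassBox \Gamma _L, \GassBox \Gamma _R \supset  B$ introducing the occurrence of $\GassBox B$ of family $j$; by Definition~\ref{local2} it produces exactly the relations $i \prec  j$ for the families $i$ occurring in $\GassBox \Gamma _L$, in $\GassBox \Gamma _R$, or inside $B$. For $i$ in $\GassBox \Gamma _L$, $i \prec  j$ already holds in $\GassT_L$, whose last rule is the $(\supset  \GassBox )$ introducing $\GassBox A$ (of family $j$) with premise $\GassBox \Gamma _L \supset  A$. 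For $i$ in $\GassBox \Gamma _R$, and for $i$ inside $B$ — here using that the occurrences of $B$ under $\GassBox $ in $\GassBox(A\to B)$ and in $\GassBox B$ correspond through Definition~\ref{boxcutcorr} — $i \prec  j$ already holds in $\GassT_R$, whose last rule is the $(\supset  \GassBox )$ introducing $\GassBox(A\to B)$ (of family $j$) with premise $\GassBox \Gamma _R \supset  A \to  B$. Hence the proof assembled just before the embedded cut elimination has no prehistoric relation outside those of $\GassT_L$ and $\GassT_R$, so no new cycle; cut elimination preserves cycle-freeness by Corollary~\ref{cutcycle}; and the final contraction adds nothing. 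A contradiction argument exactly like that in the proof of Corollary~\ref{cutcycle} then closes the case: each edge of a putative cycle in the constructed proof is witnessed, directly or via one intermediate family, already in $\GassT_L$ or $\GassT_R$.

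The step I expect to be delicate is the family bookkeeping forced by Definition~\ref{boxcutcorr}: because occurrences of the single family $j$ live in three syntactically different subformulas ($\GassBox A$ in $\GassT_L$, $\GassBox(A\to B)$ in $\GassT_R$, $\GassBox B$ throughout), and because the families inside $B$ must be traced through the two distinct $(\supset  \GassBox )$ rules of $\GassT_L$ and $\GassT_R$, one must be precise about what ``for any annotation $\an$'' means — namely an annotation of the ambient $(\GassBox\Cut)$-proof pulled back along the $\GassBox$-occurrences shared with the constructed proof — and must verify that the correspondences really identify what the argument needs. Everything else is a routine propagation of the ``no new $(\supset  \GassBox )$, hence no new relation'' observation through the structural steps, reusing Corollaries~\ref{cutprehist} and~\ref{cutcycle} for the one embedded normal cut.
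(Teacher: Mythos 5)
Your proposal is correct and follows essentially the same route as the paper: cases 1 and 2 add no new $(\supset \GassBox)$ inference, and in case 3 every relation $i \prec j$ created by the new $(\supset \GassBox)$ already holds in $\GassT_L$ or $\GassT_R$ because both original $(\supset \GassBox)$ rules introduce an occurrence of the same family $j$ as $\GassBox B$ (by the correspondence for $(\GassBox\Cut)$), with the embedded cut handled by Corollary~\ref{cutcycle}. Your treatment is in fact slightly more careful than the paper's, which does not explicitly mention the families occurring inside $B$ itself.
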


\begin{proof}

Removing weakening or side formulas \(\GassBox (A\to B)\) or \(\GassBox A\) as in case 1
and~2 does not introduce new prehistoric relations.

Any prehistoric relation because of the new \((\supset  \GassBox )\) rule in case 3
already exists in the original proof, as every \(\GassBox \) occurrence in
\(\GassBox \Gamma _L\) or \(\GassBox \Gamma _R\) also occurs in one of the two \((\supset  \GassBox )\) rules in
the original proof, which both introduce a \(\GassBox\)  of the same family as
\(\GassBox B\) by the definition of correspondence for (\GassBox Cut). 

So the new proof with (\GassBox Cut) rules replaced by (Cut) rules does not
introduce new prehistoric relations and therefore also no new
prehistoric cycles. By corollary \ref{cutcycle}, the cut elimination to
get a G3s proof does not introduce prehistoric cycles. \end{proof}

\begin{definition}

The cycle-free fragment of a system \(Y\), denoted by \(Y^\otimes \), is the
collection of all sequents that each have a prehistoric-cycle-free
\(Y\)-proof.
\end{definition}

\begin{theorem}

The cycle-free fragments of G3s + (\GassBox Cut), G3s + (Cut) and G3s are
identical. \end{theorem}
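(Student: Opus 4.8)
The plan is to prove the three cycle-free fragments coincide by establishing the two nontrivial inclusions; the remaining inclusions are immediate since every G3s-proof is a G3s+(Cut)-proof and every G3s+(Cut)-proof is a G3s+($\GassBox$Cut)-proof, and adding rules can only enlarge the class of provable sequents while never removing an existing prehistoric-cycle-free proof. So it suffices to show that if a sequent has a prehistoric-cycle-free G3s+($\GassBox$Cut)-proof, then it has a prehistoric-cycle-free G3s+(Cut)-proof, and that if it has a prehistoric-cycle-free G3s+(Cut)-proof, then it has a prehistoric-cycle-free G3s-proof.

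First I would handle the step from G3s+($\GassBox$Cut) to G3s+(Cut). Given a prehistoric-cycle-free G3s+($\GassBox$Cut)-proof $\GassT$, I would replace each ($\GassBox$Cut) rule by the derivation simulating it with ordinary (Cut) rules. The natural way is: from $\Gamma \supset \Delta, \GassBox(A \to B), \GassBox B$ one obtains, together with $\Gamma \supset \Delta, \GassBox A, \GassBox B$, a proof of $\Gamma \supset \Delta, \GassBox B$ by one or two applications of (Cut) together with the derivable sequent $\GassBox(A\to B), \GassBox A \supset \GassBox B$ (which is provable in pure G3s). I would then invoke Theorem~\ref{boxcut} (($\GassBox$Cut) elimination) and its Corollary~\ref{boxcutcycle}, which already state precisely that this replacement, followed by the cut-elimination needed to get to G3s, does not introduce prehistoric cycles; in fact the corollary's proof explicitly notes that the intermediate proof with ($\GassBox$Cut) replaced by (Cut) introduces no new prehistoric relations at all. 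So this direction is essentially already done in the preceding results — I just need to assemble the pieces and point out that the intermediate object is a genuine G3s+(Cut)-proof that is prehistoric-cycle-free.

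Second, for the step from G3s+(Cut) to G3s, I would iterate the cut-elimination construction of Theorem~\ref{cut}. The subtlety here is that cut elimination is not a single-step operation — eliminating a topmost cut may require introducing auxiliary cuts (as in the ($\supset\GassBox$)/($\supset\GassBox$) case of Theorem~\ref{cut}, where the cut is pushed up and a contraction is applied, and in cases routed through Lemma~\ref{l:contraction:1}). I would therefore set up the usual double induction (on cut-rank and on the depths of the two subproofs) and argue that at every stage Corollary~\ref{cutcycle} applies: each elementary transformation either introduces no new prehistoric relations (the cases using weakening, inversion, contraction, covered by Lemma~\ref{l:contraction:1}) or introduces only relations $i \prec j$ that are already composites $i \prec k \prec j$ through a family in the cut formula (Corollary~\ref{cutprehist}). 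Hence, by the argument of Corollary~\ref{cutcycle}, any prehistoric cycle in an intermediate proof would collapse to a prehistoric cycle in the previous proof, and by induction to one in the original. Since the original G3s+(Cut)-proof was prehistoric-cycle-free, so is the final G3s-proof.

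The main obstacle I anticipate is bookkeeping the global (as opposed to local) notion of prehistoric relation correctly across the whole cut-elimination induction: Corollary~\ref{cutprehist} is phrased for a single cut-elimination step relative to a fixed annotation $\an$, and one must be careful that when cuts are permuted and duplicated the annotation of the ambient tree is consistently tracked, and that the families in successive cut formulas chain up so that the "collapsing" argument genuinely yields a cycle in the original proof rather than merely in some intermediate one. The remark following Lemma~\ref{l:contraction:1} — that the second parts of these lemmas hold for arbitrary annotations, not just the canonical $\an_T$ — is exactly what licenses applying the corollaries to subtrees during the induction, so I would lean on it explicitly. Beyond that, the proof is a routine, if somewhat tedious, assembly of the already-established corollaries.
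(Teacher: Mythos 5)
Your proposal is correct and follows essentially the same route as the paper: the nontrivial inclusions are discharged by Corollary~\ref{cutcycle} and Corollary~\ref{boxcutcycle}, and the remaining inclusions are the easy ones. The only point the paper makes that you gloss over is that the ``trivial'' direction $\Gs^\otimes \subseteq (\Gs + (\Cut))^\otimes$ is not purely formal, since the notion of prehistoric cycle is widened from Definition~\ref{local1} to Definition~\ref{local2} when cut rules are added; one must observe that a cut-free G3s-proof that is cycle-free in the original sense is also cycle-free in the new sense, because negative families are never introduced by $(\supset \GassBox)$ in G3s and hence have no prehistoric families of their own.
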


\begin{proof}

A prehistoric-cycle-free proof in G3s by the original definition~\ref{local1} is also prehistoric-cycle-free by the new definition~\ref{local2} as a negative family can not have any prehistoric families
itself in a G3s-proof . So any sequent \[\Gamma  \subset  \Delta  \in  G3s^\otimes \] is trivially
also provable prehistoric-cycle-free in G3s + (Cut) and G3s + (\GassBox Cut) and
we have \(\Gs^\otimes  \subseteq  (\Gs + (\GassBox \Cut))^\otimes \) and \(\Gs^\otimes  \subseteq  (\Gs + (\Cut))^\otimes \).
Moreover \((\Gs + (\Cut))^\otimes  \subseteq  \Gs^\otimes \) by corollary \ref{cutcycle} and
\[(\Gs + (\GassBox \Cut))^\otimes  \subseteq  (\Gs + (\Cut))^\otimes  \subseteq  \Gs^\otimes \] by corollary~\ref{boxcutcycle}. All together we get
\[\Gs^\otimes  = (\Gs + (\Cut))^\otimes  = (\Gs + (\GassBox \Cut))^\otimes \qedhere\] \end{proof}

Yu~\cite[th.~2.21]{yu2017} shows
that non-self-referentiality is not normal in T, K4, and S4. The results
in this section hint at an explanation for this fact for S4 and at the
possibility to still use modus ponens with further restrictions in the
non-self-referential subset of S4. Namely, to consider the global
aspects of self-referentiality coming from correspondence of
occurrences, it is necessary when combining two proofs, that the two
proofs together with the correct correspondences added are
prehistoric-cycle-free. So we can only use modus ponens on two
non-self-referential S4 theorems \(A\) and \(A \to  B\) if there are proofs
of \(A\) and \(A \to  B\) such that the prehistoric relations of these
proofs combined, together with identifying the occurrences of \(A\) in
both proofs, are prehistoric-cycle-free. In that case we get a
prehistoric-cycle-free G3s proof for \(B\) using cut elimination and
corollary \ref{cutprehist}, which shows that \(B\) is also
non-self-referential.

\subsection{A sufficient condition for self-referentiality}\label{g3lp}

Cornelia Pulver~\cite{pulver2010}
introduces the system LPG3 by expanding G3c with rules for the build up
of justification terms as well as the new axioms (Axc) and (Axt). To
ensure that the contraction lemma holds, all rules have to be invertible,
which is the
reason why contracting variants of all the justification rules are used
for LPG3. Our variant G3lp will use the same rules to build up terms,
but replace the axioms with rules \((\supset  :)_c\) and \((\supset  :)_t\) to keep
the prehistoric relations of the proof intact. As there is a proof for
\(\supset  A\) for any axiom \(A\) and also for \(A \supset  A\) for any formula
\(A\), these two rules are equivalent to the two axioms and invertible.

As we already did with G3s, we will use the full system with all
classical operators for examples, but only the minimal subset with \(\to \)
and \(\bot \) for proofs. So these two systems use the classical rules from
G3s as well as the new LP
rules in figure \ref{G3lprules}.

\renewcommand{\arraystretch}{3}
\begin{figure} \caption{G3lp} \label{G3lprules}
\begin{longtable}{cc}

\AXC{$\supset  A$}
\RightLabel{$(\supset  :)_c$ ($A$ an axiom of LP)}
\UIC{$\Gamma  \supset  \Delta , c{:}A$}
\DP

&

\AXC{$t{:}A \supset  A$}
\RightLabel{$(\supset  :)_t$}
\UIC{$t{:}A, \Gamma  \supset  \Delta , t{:}A$}
\DP

\\

\RightLabel{$({:} \supset )$}
\AXC{$A, t{:}A, \Gamma  \supset  \Delta $}
\UIC{$t{:}A, \Gamma  \supset  \Delta $}
\DP

&

\RightLabel{$(\supset  !)$}
\AXC{$\Gamma  \supset  \Delta , t{:}A, !t{:}t{:}A$}
\UIC{$\Gamma  \supset  \Delta , !t{:}t{:}A$}
\DP

\\

\RightLabel{$(\supset  +)$}
\AXC{$\Gamma  \supset  \Delta , s{:}A, t{:}A, (s+t){:}A$}
\UIC{$\Gamma  \supset  \Delta , (s+t){:}A$}
\DP

&

\RightLabel{$(\supset  \cdot )$}
\AXC{$\Gamma  \supset  \Delta , s{:}(A \to  B), s\cdot t{:}B$}
\AXC{$\Gamma  \supset  \Delta , t{:}A, s\cdot t{:}B$}
\BIC{$\Gamma  \supset  \Delta , s\cdot t{:}B$}
\DP

\end{longtable}
\end{figure}

This system is adequate for the logic of proofs LP as shown in corollary
4.37 in Pulver~\cite{pulver2010}. It also
allows for weakening, contraction and inversion. By corollary 4.36 in
the same paper, G3lp without the \((\supset  :)_c\) rule is equivalent to
\(\LP_0\). Neither Pulver~\cite{pulver2010} nor
Artemov~\cite{artemov2001} define Gentzen
systems for a restricted logic of proofs LP(CS), perhaps because it
seems obvious that restricting whatever rule is used for introducing
proof constants to CS gives a Gentzen system for LP(CS).

To work with prehistoric relations in G3lp proofs we need the following
new or adapted definitions:

\begin{definition}[Subformula] The set of subformulas \(\sub(A)\) of
a LP formula \(A\) is inductively defined as follows:

\begin{enumerate}
\def\labelenumi{\arabic{enumi}.}
\item
  \(\sub(P) = \{P\}\) for any atomic formula \(P\)
\item
  \(\sub(\bot ) = \{\bot \}\)
\item
  \(\sub(A_0 \to  A_1) = \sub(A_0) \cup  \sub(A_1) \cup  \{A_0 \to  A_1\}\)
\item
  \(\sub(s+t{:}A_0) = \sub(A_0) \cup  \{s{:}A_0, t{:}A_0, s+t{:}A_0\}\)
\item
  \(\sub(t{:}A_0) = \sub(A_0) \cup  \{t{:}A_0\}\)
\end{enumerate}

\end{definition}

\begin{definition}[Subterm] The set of subterms \(\sub(t)\) of a LP
justification term \(t\) is inductively defined as follows:

\begin{enumerate}
\def\labelenumi{\arabic{enumi}.}
\item
  \(\sub(x) = \{x\}\) for any variable \(x\)
\item
  \(\sub(c) = \{c\}\) for any constant \(c\)
\item
  \(\sub(!t) = \sub(t) \cup  \{!t\}\)
\item
  \(\sub(s+t) = \sub(s) \cup  \sub(t) \cup  \{s + t\}\)
\item
  \(\sub(s\cdot t) = \sub(s) \cup  \sub(t) \cup  \{s\cdot t\}\)
\end{enumerate}

The set of subterms \(\sub(A)\) of a LP formula \(A\) is the union of
all sets of subterms for all terms occurring in \(A\). \end{definition}

We use the symbol \(\sub\) for all definitions of subterms and
subformulas, as it will be clear from context which of the definitions
is meant. Notice that by this definition \(s{:}A\) is a subformula of
\(s+t{:}A\).

We expand the definition of correspondence to G3lp
proofs as follows:

\begin{definition}[Correspondence in G3lp] All topmost terms in
active or principal formulas in the rules \((\supset  \cdot )\), \((\supset  +)\) \((\supset  !)\)
and \(({:} \supset )\) correspond to each other. \end{definition}

Notice that in the \((\supset  !)\) rule, the topmost term \(t\) in the
contraction formula therefore corresponds to the topmost proof term
\(!t\) in the principal formula. The term \(t\) of the other active
formula \(!t{:}t{:}A\) on the other hand corresponds to the same term
\(t\) in the principal formula.

By this definition, families of terms in G3lp consist not of occurrences
of a single term \(t\) but of occurrences of subterms \(s\) of a top
level term \(t\). We will use~\(\bar{t}\) for the family of occurrences
corresponding to the \emph{top level} term \(t\), i.e.~seen as a set of
terms instead of term occurrences we have \(\bar{t} \subseteq  \sub(t)\). So for
any term occurrence \(s\), \(\bar{s}\) is not necessarily the full
family of \(s\) in the complete proof tree as \(s\) could be a subterm
of the top level term \(t\) of the family. For any occurrence~\(s\) in a
sequent \(S\) of the proof tree though, \(\bar{s}\) is the family of
\(s\) relative to the subtree \(T\uhp S\) as all related terms in the
premises of G3lp rules are subterms of the related term in the
conclusion.

We also see that most rules of G3lp only relate terms to each other used
for the same subformula \(A\). The two exceptions are the \((\supset  \cdot )\) rule
and the \((\supset  !)\) rule. Similar to the cut rules from the previous
section, \((\supset  \cdot )\) relates subformulas and symbols of different
polarities as well as terms used for different formulas. So we will use
the same approach to define prehistoric relations of term families for
any polarity:

\begin{definition}[Prehistoric Relation in G3lp] A family
\(\bar{t_i}\) has a \emph{prehistoric relation} to another family
\(\bar{t_j}\), in notation \(i \prec  j\), if there is a \((\supset  :)\) rule
introducing an occurrence belonging to \(\bar{t_j}\) with premise \(S\),
such that there is an occurrence belonging to \(\bar{t_i}\) in \(S\).
\end{definition}

Given that we now have defined families of terms and prehistoric
relations between them in G3lp, it is interesting to see what happens
with this relations if we look at the forgetful projection of a G3lp
proof. That is, what happens on the G3s side if we construct a proof
tree with the forgetful projections of the original sequents. Of course
we do not get a pure G3s proof as most of the G3lp rules have no direct
equivalent in G3s. We will therefore define new rules, which are the
forgetful projection of a G3lp rule denoted for example by \((\supset  !)\GassCircli  \)
for the forgetful projection of a \((\supset  !)\) rule. The following two
lemmas show that all this new rules are admissible in G3s + \((\GassBox \Cut)\).

\begin{lemma} \label{boxbox} \(\Glp \vdash  \Gamma  \supset  \Delta , \GassBox A\) iff
\(\Glp \vdash  \Gamma  \supset  \Delta , \GassBox \GassBox A\). \end{lemma}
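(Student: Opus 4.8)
The plan is to read the statement as the sequent-level interderivability of \(\GassBox A\) and \(\GassBox\GassBox A\)—the reflection of the proof-checker direction \(\GassBox A \leftrightarrow \GassBox\GassBox A\) valid in S4—and to prove it by splicing two small auxiliary derivations onto the given proof with a single cut. Concretely, I would reduce each direction of the iff to one of the S4 validities \(\GassBox A \supset \GassBox\GassBox A\) (the \(4\)-direction) and \(\GassBox\GassBox A \supset \GassBox A\) (the reflection direction), prove each auxiliary sequent cut-free using only the box rules, and then combine by cutting on \(\GassBox A\) respectively \(\GassBox\GassBox A\). Since cut is admissible (Theorem~\ref{cut}) and weakening and contraction are admissible (Lemma~\ref{l:contraction:1}), the construction stays inside G3s (and a fortiori inside \(\Gs+(\GassBox\Cut)\)).

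First I would establish the two auxiliary sequents by backward proof search. For \(\GassBox A \supset \GassBox\GassBox A\), one \((\supset \GassBox)\) with principal formula \(\GassBox\GassBox A\) and box-context \(\GassBox A\) reduces the goal to \(\GassBox A \supset \GassBox A\), an instance of the identity lemma. For \(\GassBox\GassBox A \supset \GassBox A\), applying \((\GassBox \supset)\) to the antecedent \(\GassBox\GassBox A\) reduces the goal to \(\GassBox A, \GassBox\GassBox A \supset \GassBox A\); a \((\supset \GassBox)\) keeping \(\GassBox A, \GassBox\GassBox A\) as box-context reduces this to \(\GassBox A, \GassBox\GassBox A \supset A\); and a final \((\GassBox \supset)\) on \(\GassBox A\) reduces it to the identity \(A, \GassBox A, \GassBox\GassBox A \supset A\). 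Both auxiliary proofs are thus short and cut-free, using only \((\GassBox \supset)\), \((\supset \GassBox)\) and the routine identity lemma (derivable by induction on the formula).

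Next I would combine. For \(\Glp \vdash \Gamma \supset \Delta, \GassBox A \Rightarrow \Glp \vdash \Gamma \supset \Delta, \GassBox\GassBox A\): weaken the auxiliary proof of \(\GassBox A \supset \GassBox\GassBox A\) into \(\GassBox A, \Gamma \supset \Delta, \GassBox\GassBox A\), weaken the hypothesis into \(\Gamma \supset \Delta, \GassBox\GassBox A, \GassBox A\), and cut on \(\GassBox A\) to obtain \(\Gamma \supset \Delta, \GassBox\GassBox A\). The converse direction is symmetric, cutting on \(\GassBox\GassBox A\) against the weakened auxiliary proof of \(\GassBox\GassBox A \supset \GassBox A\). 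In each case a final contraction removes the context formula duplicated by the shared-context cut, and Theorem~\ref{cut} eliminates the cut to yield a genuine G3s derivation; weakening and contraction are harmless by Lemma~\ref{l:contraction:1}.

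The modal derivations and the context bookkeeping are routine; the step that needs care for the surrounding development is prehistoric cycles. Each auxiliary derivation contains a \((\supset \GassBox)\) whose premise still mentions the inner box, so it does create a prehistoric relation, and I must check this cannot close a cycle. This is exactly the relation that the \((\supset !)\)-correspondence identifies, so the only new relation runs from the reflected copy to its box and is consumed locally; together with the fact that weakening introduces no prehistoric relations (Lemma~\ref{l:contraction:1}) and that cut elimination introduces no prehistoric cycles (Corollaries~\ref{cutprehist} and~\ref{cutcycle}), the resulting G3s proof is prehistoric-cycle-free whenever the input proof is. Verifying this cycle-freedom, rather than the modal equivalence itself, is the main obstacle.
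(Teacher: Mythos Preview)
Your argument for the bare derivability claim is correct, but it takes a different route from the paper's. The paper handles the $(\Leftarrow)$ direction by a single appeal to inversion for $(\supset\GassBox)$, and the $(\Rightarrow)$ direction by a direct structural induction on the given derivation: if $\GassBox A$ is a weakening formula, weaken in $\GassBox\GassBox A$ instead; if it is a side formula, push through by the induction hypothesis; and if it is principal in a $(\supset\GassBox)$ with premise $\GassBox\Gamma\supset A$, simply stack a second $(\supset\GassBox)$ to obtain $\GassBox\Gamma\supset\GassBox A$ and then $\Gamma',\GassBox\Gamma\supset\Delta,\GassBox\GassBox A$. No auxiliary sequents, no identity lemma, no cut.

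The difference is not cosmetic, because the lemma is not used as a mere existence statement. The forgetful projection of a G3lp proof is \emph{defined} via ``the algorithm implicitly described in the proof for lemma~\ref{boxbox}'', and the later family and prehistoric-relation lemmas appeal to structural features of that specific construction---for instance that it ``only removes weakening formulas from existing $(\supset\GassBox)$ rules'', and that the only new $(\supset\GassBox)$ rules are the single inserted ones of the principal case, whose premise $\GassBox\Gamma\supset\GassBox A$ is essentially a copy of a sequent already present. Your route passes through weakening, an identity derivation for $\GassBox A$ (which itself contains a $(\supset\GassBox)$), a cut, and then full cut elimination via Theorem~\ref{cut}; the output proof has a rather different shape, and those bookkeeping properties are no longer immediate. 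Your final paragraph notices that cycle-freedom is the real issue, but Corollaries~\ref{cutprehist} and~\ref{cutcycle} only control \emph{cycles}, not the finer correspondence structure that the proof of Corollary~\ref{forgetful} relies on. So as a proof of the lemma in isolation your approach is fine and pleasantly modular; for the role the lemma actually plays here, the paper's direct induction is the right tool.
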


\begin{proof}

The (\GassLeft ) direction is just inversion for \((\supset  \GassBox )\). The (\GassRight ) direction is
proven by the following structural induction:

1.~case: \(\GassBox A\) is a weakening formula of the last rule. Just weaken in
\(\GassBox \GassBox A\).

2.~case: \(\GassBox A\) is a side formula of the last rule. Use the induction
hypothesis on the premises and append the same last rule.

3.~case: \(\GassBox A\) is the principal formula of the last rule. Then the last
rule is a \((\supset  \GassBox )\) rule and has the following form:

\AXC{$\GassBox \Gamma  \supset  A$}
\RightLabel{$(\supset  \GassBox )$}
\UIC{$\Gamma ', \GassBox \Gamma  \supset  \Delta , \GassBox A$}
\DP

Use an additional \((\supset  \GassBox )\) rule to get the necessary proof as follows:

\AXC{$\GassBox \Gamma  \supset  A$}
\RightLabel{$(\supset  \GassBox )$}
\UIC{$\GassBox \Gamma  \supset  \GassBox A$}
\RightLabel{$(\supset  \GassBox )$}
\UIC{$\Gamma ', \GassBox \Gamma  \supset  \Delta , \GassBox \GassBox A$}
\DP
\end{proof}

\begin{lemma}

The forgetful projection of all rules in G3lp are admissible in G3s +
(\GassBox Cut). \end{lemma}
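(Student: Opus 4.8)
The plan is to go through the rules of G3lp one at a time, apply the forgetful projection $(\cdot)\GassCircli$ to each rule's premises and conclusion, and in each case exhibit a short derivation of the projected conclusion from the projected premises inside G3s + (\GassBox Cut). Since $(\cdot)\GassCircli$ commutes with $\bot$ and $\to$, and hence with all the derived connectives, every classical rule of G3lp projects to the identical classical rule of G3s, so there is nothing to prove for those. Among the six justification rules, three merely build up a term without touching the underlying modal formula, and their projections collapse to rules already available: $({:} \supset )\GassCircli$ carries the premise $A\GassCircli, \GassBox A\GassCircli, \Gamma\GassCircli \supset \Delta\GassCircli$ to the conclusion $\GassBox A\GassCircli, \Gamma\GassCircli \supset \Delta\GassCircli$, which is literally $(\GassBox \supset )$; $(\supset :)_c\GassCircli$ and $(\supset :)_t\GassCircli$ each follow from their projected premise (respectively $\supset A\GassCircli$ and $\GassBox A\GassCircli \supset A\GassCircli$) by a single application of $(\supset \GassBox )$ taking $\Gamma\GassCircli$ and $\Delta\GassCircli$ as weakening formulas; and $(\supset +)\GassCircli$ carries $\Gamma\GassCircli \supset \Delta\GassCircli, \GassBox A\GassCircli, \GassBox A\GassCircli, \GassBox A\GassCircli$ to $\Gamma\GassCircli \supset \Delta\GassCircli, \GassBox A\GassCircli$, which is two contractions, admissible by Lemma~\ref{l:contraction:1}.

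The two remaining rules are exactly the ones that forced (\GassBox Cut) and Lemma~\ref{boxbox} into the picture. The rule $(\supset \cdot )$ projects to the step from $\Gamma\GassCircli \supset \Delta\GassCircli, \GassBox(A\GassCircli \to B\GassCircli), \GassBox B\GassCircli$ and $\Gamma\GassCircli \supset \Delta\GassCircli, \GassBox A\GassCircli, \GassBox B\GassCircli$ to $\Gamma\GassCircli \supset \Delta\GassCircli, \GassBox B\GassCircli$, which is precisely an instance of the (\GassBox Cut) rule, so nothing further is needed. The rule $(\supset !)$ projects to the step from $\Gamma\GassCircli \supset \Delta\GassCircli, \GassBox A\GassCircli, \GassBox\GassBox A\GassCircli$ to $\Gamma\GassCircli \supset \Delta\GassCircli, \GassBox\GassBox A\GassCircli$; here I would read the projected premise as having succedent $\Delta\GassCircli, \GassBox\GassBox A\GassCircli$ together with the extra formula $\GassBox A\GassCircli$, apply Lemma~\ref{boxbox} in the $(\GassRight)$ direction to turn that $\GassBox A\GassCircli$ into a second occurrence of $\GassBox\GassBox A\GassCircli$, and then contract (Lemma~\ref{l:contraction:1}).

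I expect the only genuine obstacle to be bookkeeping, not ideas: one must check that Lemma~\ref{boxbox} is available for G3s + (\GassBox Cut) and not only for cut-free G3s, but its proof is a structural induction whose sole non-trivial case is a $(\supset \GassBox )$ step, and (\GassBox Cut) merely adds one more side-formula case (the $\GassBox A$ being projected is never a principal formula of a (\GassBox Cut)), so the induction goes through; and in the routine side-formula cases where one uses the induction hypothesis and re-appends the last rule, this is legitimate because $(\cdot)\GassCircli$ preserves which formulas are side, weakening, or principal. With those points dispatched, the statement reduces to the rule-for-rule dictionary $({:} \supset )\GassCircli = (\GassBox \supset )$, $(\supset \cdot )\GassCircli$ an instance of (\GassBox Cut), $(\supset :)_c\GassCircli$ and $(\supset :)_t\GassCircli$ single $(\supset \GassBox )$ steps, $(\supset +)\GassCircli$ and $(\supset !)\GassCircli$ reductions to contraction (the latter after one use of Lemma~\ref{boxbox}), and every classical rule unchanged.
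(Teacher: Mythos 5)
Your proof is correct and follows essentially the same rule-for-rule dictionary as the paper's: the classical rules are shared, \((\supset +)\GassCircli\) is a contraction, \((\supset :)_c\GassCircli\) and \((\supset :)_t\GassCircli\) are instances of \((\supset \GassBox)\), \((\supset \cdot)\GassCircli\) is an instance of \((\GassBox\Cut)\), and \((\supset !)\GassCircli\) is handled by Lemma~\ref{boxbox} followed by a contraction. One caveat: your parenthetical claim that the \(\GassBox A\) being double-boxed is never the principal formula of a \((\GassBox\Cut)\) is not true in general (take \(t = s\cdot u\) and place the \((\supset !)\) immediately above the \((\supset \cdot)\) that builds \(s\cdot u{:}A\)), so extending Lemma~\ref{boxbox} from G3s to G3s + \((\GassBox\Cut)\) requires either a separate argument for that case or a detour through \((\GassBox\Cut)\)-elimination --- a gap the paper's own proof of Lemma~\ref{boxbox} also leaves open.
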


\begin{proof}

The subset G3c is shared by G3lp and G3s and is therefore trivially
admissible. The forgetful projection of the rule \((\supset  +)\) is just a
contraction and therefore also admissible. The forgetful projection of
the rules \((\supset  :)_t\) and \((\supset  :)_c\) are \((\supset  \GassBox )\) rules in G3s. The
forgetful projection of \((\supset  \cdot )\) is a \((\GassBox \Cut)\). Finally the
forgetful projection of a \((\supset  !)\) rule has the following form:

\AXC{$\Gamma  \supset  \Delta , \GassBox A, \GassBox \GassBox A$}
\RightLabel{$(\supset  !)\GassCircli  $}
\UIC{$\Gamma  \supset  \Delta , \GassBox \GassBox A$}
\DP

That rule is admissible by lemma \ref{boxbox} and a contraction.
\end{proof}

Instead of working with a G3s system with all this extra rules included,
we will define a forgetful projection from a G3lp proof to a G3s +
\((\GassBox \Cut)\) proof by eliminating all  contractions using the
algorithm implicitly defined in the proof of contraction admissibility 
and eliminating the \((\supset  !)\GassCircli  \) rules by the algorithm
implicitly described in the proof for lemma \ref{boxbox}.

For the following lemmas and proofs we fix an arbitrary G3lp proof
\(\GassT  = (T, R)\) and its forgetful projection \(\GassT \GassCircli   = (T', R')\) as defined
below.

\begin{definition}[Forgetful Projection of a G3lp Proof] The
forgetful projection of a G3lp proof \(\GassT  = (T, R)\) for a LP sequent
\(\Gamma  \supset  \Delta \) is the G3s + \((\GassBox \Cut)\) proof \(\GassT \GassCircli   = (T', R')\) for
\(\Gamma \GassCircli   \supset  \Delta \GassCircli  \) inductively defined as follows:

1. case: The last rule of \(\GassT \) is an axiom. Then \(\GassT \GassCircli  \) is just
\(\Gamma \GassCircli   \supset  \Delta \GassCircli  \) which is an axiom of G3s.

2. case: The last rule of \(\GassT \) is a \((\supset  \to )\) or a \((\to  \supset )\) rule with
premises \(S_i\). Then \(\GassT \GassCircli  \) has the same last rule with \((\GassT \uhp S_i)\GassCircli  \)
as proofs for the premises \(S_i\GassCircli  \).

3. case: The last rule of \(\GassT \) is a \((\supset  :)_c\) or \((\supset  :)_t\) rule
with premise \(S\). Then \(\GassT \GassCircli  \) has a \((\supset  \GassBox )\) as last rule with
\((\GassT \uhp S)\GassCircli  \) as proof for the premise \(S\GassCircli  \).

4. case: The last rule of \(\GassT \) is a \((\supset  +)\) rule with premise \(S\).
Then \(\GassT \GassCircli  \) is \((\GassT \uhp S)\GassCircli  \) with the necessary contraction applied.

5. case: The last rule of \(\GassT \) is a \((\supset  \cdot )\) rule with premises
\(S_0\) and \(S_1\). Then \(\GassT \GassCircli  \) has a \((\GassBox \Cut)\) as last rule with
\((\GassT \uhp S_i)\GassCircli  \) as proofs for the premises \(S_i\GassCircli  \).

6. case: The last rule of \(\GassT \) is a \((\supset  !)\) rule with premise \(S\).
Then we get a G3s + \((\GassBox \Cut)\) proof for \(\Gamma \GassCircli   \supset  \Delta \GassCircli  , \GassBox \GassBox A\) from the
proof \((\GassT \uhp S)\GassCircli  \) by lemma \ref{boxbox}. \(\GassT \GassCircli  \) is that proof with the
additional \(\GassBox \GassBox A\) removed by contraction as \(\GassBox \GassBox A \in  \Delta \GassCircli  \).

\end{definition}

To reason about the relations between a G3lp proof \(\GassT \) and its
forgetful projection \(\GassT \GassCircli  \), the following algorithm to construct \(\GassT \GassCircli  \)
is useful:

\begin{enumerate}
\def\labelenumi{\arabic{enumi}.}
\item
  Replace all sequents by their forgetful projection.
\item
  Add the additional \((\supset  \GassBox )\) rules and prepend additional \(\GassBox \) where
  necessary, so that the forgetful projections of \((\supset  !)\) reduce to
  simple contractions.
\item
  Eliminate all contractions to get a G3s + \((\GassBox \Cut)\) proof.
\end{enumerate}

It is not immediately clear that contracting formulas only removes
occurrences as the proof uses inversion which in turn also adds
weakening formulas. But all the deconstructed parts weakened in this way
get contracted again in the next step of the contraction. In the end the
contracted proof tree is always a subset of the original proof tree.

That means that also \(\GassT \GassCircli  \) is a subset of the tree constructed in step
2 of the algorithm. From this we see that all \(\GassBox \) occurrences in
\(\GassT \GassCircli  \) have a term occurrence in \(\GassT \) mapped to them if we consider the
extra \(\GassBox \) occurrences introduced in step~2 (resp.~in case 6 of the
definition) as replacements of the same term as the \(\GassBox \) occurrences
they are contracted with and also consider the extra sequents
\(\GassBox \Gamma  \supset  \GassBox A\) introduced in step 2 as copies of the same formulas in the
original sequent \(\Gamma ', \GassBox \Gamma  \supset  \Delta , \GassBox A\) derived by the original \((\supset  \GassBox )\)
rule.

\begin{lemma}

For any family \(f_i\) of \(\GassBox \) occurrences in \(\GassT \GassCircli  \) there is a unique
proof term family \(\bar{t}_{\tilde{i}}\) in \(\GassT \) such that
\(s \in  \bar{t}_{\tilde{i}}\) for all proof term occurrences \(s\) mapped
to \(\GassBox \) occurrences in \(f_i\). \end{lemma}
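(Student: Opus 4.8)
The plan is to prove the sharper statement that, writing $s\mapsto\GassBox$ when the term occurrence $s$ of $\GassT$ is mapped to the $\GassBox$-occurrence $\GassBox$ of $\GassT\GassCircli$ in the sense fixed just before the lemma (so in particular an extra $\GassBox$ introduced in step~2, equivalently case~6 of the definition, carries the same term as the $\GassBox$ it is contracted with, and the auxiliary sequents $\GassBox\Gamma\supset\GassBox A$ of step~2 count as copies of the $(\supset\GassBox)$-conclusion they came from), the preimage $\{\,s : s\mapsto\GassBox \text{ for some } \GassBox\in f_i\,\}$ of any family $f_i$ of corresponding $\GassBox$-occurrences of $\GassT\GassCircli$ lies inside a single family of corresponding term occurrences of $\GassT$. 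Granting this, the lemma is immediate: $f_i$ is a non-empty set of $\GassBox$-occurrences, and by the remark preceding the lemma each such occurrence has at least one term mapped to it, so the preimage of $f_i$ is non-empty; since the term-occurrence families of $\GassT$ partition the term occurrences, the family containing that preimage is the unique $\bar{t}_{\tilde{i}}$ with $s\in\bar{t}_{\tilde{i}}$ for all $s$ mapped to $f_i$.

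Both notions of correspondence are reflexive--transitive closures of \emph{direct} correspondence across a single rule instance, and the term families are closed under correspondence, so it suffices to show: if $\GassBox_1$ and $\GassBox_2$ directly correspond through some rule instance of $\GassT\GassCircli$ and $s_1\mapsto\GassBox_1$, $s_2\mapsto\GassBox_2$, then $s_1$ and $s_2$ correspond in $\GassT$. After the contraction elimination of step~3, the rules occurring in $\GassT\GassCircli$ are the classical G3c rules, $(\supset\GassBox)$, $(\GassBox\supset)$, and $(\GassBox\Cut)$; a $(\supset\GassBox)$ or $(\GassBox\supset)$ instance only propagates a $\GassBox$-occurrence along one branch or identifies a side-formula copy with its parent, and the same is true of the auxiliary $(\supset\GassBox)$'s of step~2, so the only rule instances that identify a~priori distinct $\GassBox$-occurrences are the $(\GassBox\Cut)$ instances and the contractions of the construction. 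For the G3c rules and for a $(\supset\GassBox)$/$(\GassBox\supset)$ arising as the projection of a $(\supset:)_c$, $(\supset:)_t$ or $({:}\supset)$ rule, the identically placed G3lp rule of $\GassT$ transports the top-level-term-to-top-level-$\GassBox$ link at the corresponding positions, so $s_1$ and $s_2$ already correspond through that rule of $\GassT$. A $(\GassBox\Cut)$ instance of $\GassT\GassCircli$ is, by the construction, exactly the forgetful projection of some $(\supset\cdot)$ instance with premises $\Gamma\supset\Delta, s{:}(A\to B), s\cdot t{:}B$ and $\Gamma\supset\Delta, t{:}A, s\cdot t{:}B$; under the projection the formulas $\GassBox(A\to B)\GassCircli$, $\GassBox A\GassCircli$, $\GassBox B\GassCircli$ carry the terms $s$, $t$, $s\cdot t$, which all belong to the single term family of $s\cdot t$ because $(\supset\cdot)$ makes their topmost terms correspond, while Definition~\ref{boxcutcorr} identifies precisely those three $\GassBox$-occurrences --- so the $\GassBox$-family and the term family match. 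The contractions coming from a $(\supset+)$ projection are handled the same way: they contract the three copies of $\GassBox A\GassCircli$ coming from $s{:}A$, $t{:}A$, $(s+t){:}A$, and $s$, $t$, $s+t$ all lie in the family of $s+t$.

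The remaining case, and the one I expect to be the real obstacle, is the contraction produced in case~6 of the forgetful projection, where $(\supset!)\GassCircli$ is first turned, via Lemma~\ref{boxbox}, into a derivation containing a second copy of $\GassBox\GassBox A\GassCircli$ which is then contracted away. One copy is the projection of $!t{:}t{:}A$, whose outer $\GassBox$ carries the occurrence $!t$ and whose inner $\GassBox$ carries the inner occurrence of $t$; the other is obtained by prepending a $\GassBox$ (via case~3 of Lemma~\ref{boxbox}, which inserts an extra $(\supset\GassBox)$) to the projection $\GassBox A\GassCircli$ of the contraction formula $t{:}A$, whose $\GassBox$ carries the topmost occurrence of $t$ in that $t{:}A$, while the prepended outer $\GassBox$ carries, by the convention, the same term as the $\GassBox$ it is contracted with. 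I would need to check that, once these conventions are unwound, the contraction identifies only $\GassBox$-occurrences with term in a common family: the two outer $\GassBox$'s should both be absorbed into the family of $!t$, and the two inner $\GassBox$'s both into the family of the inner $t$. This forces an explicit appeal to the $(\supset!)$-correspondence convention of $\GassT$ --- the topmost term of the contraction formula $t{:}A$ corresponds to the topmost term $!t$ of the principal formula, whereas the inner $t$ of the active $!t{:}t{:}A$ corresponds only to the inner $t$ of the principal --- and one must make sure this is compatible with the ``replacement of the same term'' convention and does not inadvertently merge the family of $!t$ with the family of the inner $t$. If that reconciliation goes through, the direct-correspondence claim holds for every rule of $\GassT\GassCircli$, the closure argument of the previous paragraph lifts it to full correspondence, and the partition argument of the first paragraph finishes the proof.
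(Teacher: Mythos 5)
Your proposal follows the paper's proof essentially step for step: both reduce the lemma to showing that any two \emph{directly} corresponding \(\GassBox \) occurrences in \(\GassT \GassCircli  \) have mapped term occurrences that correspond in \(\GassT \), split into the cases of rules that are forgetful projections of G3lp rules versus the extra \(\GassBox \) occurrences and \((\supset  \GassBox )\) rules added in step 2 of the construction, and then pass to the reflexive--transitive closure and the partition into families. Your handling of \((\GassBox \Cut)\) and of the \((\supset  +)\) contractions is in fact more explicit than the paper's, which subsumes these under the blanket remark that G3lp rules with a direct equivalent have the same correspondences.

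The one point you leave conditional, the contraction arising in case 6, is the only place where your write-up stops short of a complete proof, and it does go through. The two outer boxes are disposed of exactly by the convention you state at the outset: the prepended \(\GassBox \) is declared to map to the same occurrence of \(!t\) as the \(\GassBox \) it is contracted with, so reflexivity suffices (this is the paper's cases 1 and 3). For the two inner boxes, observe that the contraction formula \(t{:}A\) is an active formula of the \((\supset  !)\) rule, so in addition to the G3lp-specific clause sending its topmost \(t\) to \(!t\), the general correspondence clauses send that same occurrence to the inner \(t\) of the principal formula \(!t{:}t{:}A\); hence the topmost \(t\) of \(t{:}A\) and the inner \(t\) of the active \(!t{:}t{:}A\) both correspond to the inner \(t\) of the conclusion and therefore lie in one family. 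That this family coincides with the family of \(!t\) is harmless: the lemma only claims that all terms mapped to one \(\GassBox \)-family lie in a \emph{single} term family, not that distinct \(\GassBox \)-families give distinct term families, and the paper's observation that a term family \(\bar{t}\) consists of occurrences of \emph{subterms} of a top-level term already anticipates exactly this merging. With that verification supplied, your argument is complete and agrees with the paper's.
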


\begin{proof}

For any two directly corresponding \(\GassBox \) occurrences we show that the
two mapped term occurrences correspond directly or by reflexive closure:

1.~case: The two \(\GassBox \) occurrences are added in step 2 of the algorithm.
Then the mapped term occurrences are the same occurrence and correspond
by reflexive closure.

2.~case: The two \(\GassBox \) occurrence correspond directly by a rule which is
the forgetful projection of a rule in \(\GassT \). Then the mapped term
occurrences also correspond as all G3lp rules with a direct equivalent
in G3s have the same correspondences. Notice that lemma \ref{boxbox}
only removes weakening formulas from existing \((\supset  \GassBox )\) rules. So this
still holds for \((\supset  \GassBox )\) rules and their corresponding \((\supset  :)\) rules
even after applying lemma \ref{boxbox}.

3.~case: The two \(\GassBox \) occurrences correspond directly by a \((\supset  \GassBox )\)
rule added in step 2 of the algorithm. Then the rule together with the
previous rule has the following form:

\AXC{$\GassBox \Gamma  \supset  A$}
\RightLabel{$(\supset  \GassBox )$}
\UIC{$\GassBox \Gamma  \supset  \GassBox A$}
\RightLabel{$(\supset  \GassBox )$}
\UIC{$\Gamma ', \GassBox \Gamma  \supset  \Delta , \GassBox \GassBox A$}
\DP

As the formulas in \(\GassBox \Gamma  \supset  \GassBox A\) are considered copies of the original
sequent \(\Gamma ', \GassBox \Gamma  \supset  \Delta , \GassBox A\), and the sequent \(\Gamma ', \GassBox \Gamma  \supset  \Delta , \GassBox \GassBox A\) is
considered the same sequent with an additional \(\GassBox \) symbol, the mapped
term occurrences are actually the same and therefore correspond by
reflexive closure.

As direct correspondence in the G3s proof is a subset of correspondence
in the G3lp proof, so is its transitive and reflexive closure. So for
any two corresponding \(\GassBox \) occurrences of a family \(f_i\) the mapped
term occurrences also correspond and therefore belong to the same family
\(\bar{t}_{\tilde{i}}\). \end{proof}

\begin{lemma}

If \(i \prec  j\) in \(\GassT \GassCircli  \) then either \(\tilde{i} = \tilde{j}\) or
\(\tilde{i} \prec  \tilde{j}\) in \(\GassT \) for the term families
\(\bar{t}_{\tilde{i}}\) and \(\bar{t}_{\tilde{j}}\) from the previous
lemma. \end{lemma}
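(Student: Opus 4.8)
The plan is to trace the single prehistoric relation $i \prec j$ in $\GassT\GassCircli$ back through the forgetful-projection construction. Suppose $i \prec j$ holds in $\GassT\GassCircli$. By the definition of prehistoric relation in G3s + $(\GassBox\Cut)$ (Definition~\ref{local2}), there is a $(\supset\GassBox)$ rule in $\GassT\GassCircli$ introducing an occurrence of $\GassBox_j$ with premise $S$, and an occurrence of $\GassBox_i$ somewhere in $S$. There are two kinds of $(\supset\GassBox)$ rules in $\GassT\GassCircli$: those that are forgetful projections of a $(\supset :)_c$ or $(\supset :)_t$ rule of $\GassT$ (case~3 of the definition of $\GassT\GassCircli$), and the auxiliary $(\supset\GassBox)$ rules added in step~2 of the algorithm (the upper $(\supset\GassBox)$ in the diagram for case~6). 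I would handle these two cases separately.

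In the first case, the $(\supset\GassBox)$ rule in $\GassT\GassCircli$ comes from a genuine $(\supset :)$ rule $\rho$ of $\GassT$ whose premise $S^{\GassT}$ maps to $S$ (up to the elimination of contractions, which by the remark following the algorithm only deletes occurrences and never moves a $\GassBox$ between branches, so $S$ is a sub-multiset of the forgetful projection of $S^{\GassT}$). The occurrence of $\GassBox_j$ introduced here is the forgetful projection of the topmost term occurrence of $\rho$'s principal formula, which by the previous lemma lies in $\bar{t}_{\tilde\jmath}$; likewise the occurrence of $\GassBox_i$ in $S$ is the forgetful projection of a term occurrence in $S^{\GassT}$ lying in $\bar{t}_{\tilde\imath}$. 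If these two term occurrences belong to the same family, then $\tilde\imath=\tilde\jmath$; otherwise $\rho$ is a $(\supset :)$ rule introducing an occurrence of $\bar{t}_{\tilde\jmath}$ with an occurrence of $\bar{t}_{\tilde\imath}$ in its premise, which is exactly $\tilde\imath\prec\tilde\jmath$ in $\GassT$ by the definition of prehistoric relation in G3lp.

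In the second case, the $(\supset\GassBox)$ rule was inserted in step~2 to make a $(\supset !)\GassCircli$ collapse to a contraction; its conclusion is $\GassBox\Gamma\supset\GassBox A$ and its premise is $\GassBox\Gamma\supset A$, both regarded (per the remark after the algorithm) as copies of the sequent produced by the original $(\supset\GassBox)$ rule sitting just below. The $\GassBox$ it introduces, $\GassBox_j$, is identified with the $\GassBox$ of that original $(\supset\GassBox)$, and every occurrence of $\GassBox_i$ in its premise $\GassBox\Gamma\supset A$ already occurs in the premise of that original $(\supset\GassBox)$ rule; hence any prehistoric relation $i\prec j$ arising from the inserted rule already arises from the original $(\supset\GassBox)$ rule, reducing this case to the first one. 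Combining the two cases gives: every $i\prec j$ in $\GassT\GassCircli$ yields $\tilde\imath=\tilde\jmath$ or $\tilde\imath\prec\tilde\jmath$ in $\GassT$, as claimed.

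The main obstacle I expect is bookkeeping rather than conceptual: one must be careful that the contraction-elimination of step~3 genuinely does not create a new $(\supset\GassBox)$ rule nor relocate a $\GassBox$ occurrence from one branch of the tree into the premise of a $(\supset\GassBox)$ on another branch — this is what the ``the contracted proof tree is always a subset of the original proof tree'' remark is for, and the argument should invoke it explicitly. A secondary subtlety is that in a $(\supset !)$ rule the topmost term $t$ of the contraction formula $t{:}A$ corresponds to $!t$, not to the $t$ inside $!t{:}t{:}A$; so when the auxiliary $\GassBox$'s are threaded through, one should double-check that the term-family bookkeeping from the previous lemma already accounts for this, which it does since that lemma was proved with exactly this correspondence convention in force.
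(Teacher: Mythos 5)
Your overall strategy --- split the $(\supset\GassBox)$ rules of $\GassT\GassCircli$ into those that project genuine $(\supset :)$ rules and those created by the treatment of $(\supset !)$, then push each back to a G3lp prehistoric relation or to an identity of term families --- is the paper's strategy, and your first case matches the paper's first case. The gap is in how you dispose of the $(\supset !)$ stacks. After lemma~\ref{boxbox} is applied, the single projected rule with premise $\GassBox\Gamma \supset A$ and conclusion $\Gamma',\GassBox\Gamma \supset \Delta, \GassBox A$ becomes \emph{two} $(\supset\GassBox)$ rules: an upper one with conclusion $\GassBox\Gamma \supset \GassBox_k A$ introducing the inner box (mapped to $t$), and a lower one with premise $\GassBox\Gamma \supset \GassBox_k A$ and conclusion $\Gamma',\GassBox\Gamma \supset \Delta, \GassBox_j\GassBox_k A$ introducing the outer box (mapped to $!t$). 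Your second case describes only the upper rule and reduces it, correctly, to the first case, since its premise is still the projection of the $(\supset :)$ premise. The lower rule is then left to your first case, and there your argument breaks: its premise is the intermediate sequent $\GassBox\Gamma\supset\GassBox_k A$, a copy of the original \emph{conclusion}, not a sub-multiset of the projection of the $(\supset :)$ rule's premise. In particular the occurrence $\GassBox_k$ in that premise is mapped to the topmost term $t$ of the principal formula, which need not occur in the premise of the $(\supset :)$ rule at all (e.g.\ for $(\supset :)_c$ with premise $\supset A$ and $c \notin \sub(A)$), so the definition of prehistoric relations in G3lp does not yield $\tilde{i}\prec\tilde{j}$. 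This is exactly where the disjunct $\tilde{i}=\tilde{j}$ is earned: $t$ and $!t$ lie in the same term family by the correspondence for $(\supset !)$, so the relation $k \prec j$ created by the lower rule collapses to $\tilde{k}=\tilde{j}$. You note this correspondence in your closing remark but never use it where it is needed.

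A second, related defect: for the side families $\GassBox_i$ in $\GassBox\Gamma$, the lower rule's relation $i\prec j$ must be chained through the rule above it (giving $i \prec k$ together with $\tilde{k}=\tilde{j}$); since for nested $!$ the rule above may itself be an added rule, one reduction step does not suffice, and the paper therefore runs an induction on proof height. Your reduction ``to the first case'' is a single step and is not well-founded as stated. Both defects are repairable --- assign the lower rule of each stack to your second case, invoke the $t$/$!t$ correspondence to get $\tilde{k}=\tilde{j}$, and induct on proof height --- but as written the one rule that actually requires a new argument is not covered by either of your cases.
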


\begin{proof}

\(i \prec  j\) in \(\GassT \GassCircli  \), so there is a \((\supset  \GassBox )\) rule in \(\GassT \GassCircli  \) introducing
an occurrence \(\GassBox _j\) of \(f_j\) with an occurrence \(\GassBox _i\) of \(f_i\)
in the premise. For the mapped term occurrences \(s_i\) and \(s_j\) in
\(\GassT \) we have \(s_i \in  \bar{t}_{\tilde{i}}\) and
\(s_j \in  \bar{t}_{\tilde{j}}\) by the previous lemma. From this it
follows that \(\tilde{i} \prec  \tilde{j}\) or \(\tilde{i} = \tilde{j}\) by
an induction on the proof height:

1.~case: The \((\supset  \GassBox )\) rule is the forgetful projection of a \((\supset  :)\)
rule. Then we have \(\tilde{i} \prec  \tilde{j}\) directly by the definition
of prehistoric relations for G3lp proofs using the occurrences \(s_i\)
in the premise of the rule \((\supset  :)\) introducing the occurrence \(s_j\).

2.~case: The \((\supset  \GassBox )\) rule is added in step 2 of the algorithm. Then
the rule together with the previous rule has the following form:

\AXC{$\GassBox \Gamma  \supset  A$}
\RightLabel{$(\supset  \GassBox )$}
\UIC{$\GassBox \Gamma  \supset  \GassBox _kA$}
\RightLabel{$(\supset  \GassBox )$}
\UIC{$\Gamma ', \GassBox \Gamma  \supset  \Delta , \GassBox _j\GassBox _kA$}
\DP

For the term occurrence \(s_k\) mapped to the occurrence \(\GassBox _k\) we have
\(s_j = !s_k\) and \(s_k \in  \bar{t}_{\tilde{j}}\) as \(s_j\) is the top
level term of the principal formula of a \((\supset  !)\) rule. If the
occurrence \(\GassBox _i\) is the occurrence \(\GassBox _k\) then
\(\tilde{i} = \tilde{j}\) and we are finished. If the occurrence \(\GassBox _i\)
is not the occurrence \(\GassBox _k\) then there is a corresponding occurrence
\(\GassBox '_i\) with a corresponding mapped term \(s'_i\) in the sequent
\(\GassBox \Gamma  \supset  A\) and we have \(i \prec  k\) from the previous \((\supset  \GassBox )\). As
\(\bar{t}_{\tilde{j}}\) is also the term family of \(s_k\) we get
\(\tilde{i} \prec  \tilde{j}\) or \(\tilde{i} = \tilde{j}\) by induction
hypothesis on the shorter proof up to the that \((\supset  \GassBox )\) rule with the
occurrences \(\GassBox '_i\), \(s'_i\), \(\GassBox _k\) and \(s_k\). \end{proof}

\begin{lcorollary} \label{forgetful} If \(\GassT \) is prehistoric-cycle-free
then also \(\GassT \GassCircli  \) is prehistoric-cycle-free. \end{lcorollary}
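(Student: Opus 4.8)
The plan is to argue by contraposition: I would assume that $\GassT\GassCircli$ has a prehistoric cycle and produce one in $\GassT$, contradicting the hypothesis. So fix a prehistoric cycle $i_0\prec i_1\prec\cdots\prec i_{n-1}\prec i_0$ among the $\GassBox$-families of $\GassT\GassCircli$ (indices read modulo $n$). The first of the two lemmas immediately preceding this corollary attaches to each family $f_{i_m}$ a proof-term family $\bar t_{\tilde i_m}$ of $\GassT$, and the second such lemma says that every link $i_m\prec i_{m+1}$ of the cycle is mirrored on the $\GassT$-side by either $\tilde i_m=\tilde i_{m+1}$ or $\tilde i_m\prec\tilde i_{m+1}$.

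The first step is then to collapse the translated chain: going once around the cycle and contracting every link that became an equality, I obtain a closed walk $\tilde i_{m_0}\prec\tilde i_{m_1}\prec\cdots\prec\tilde i_{m_0}$ built only from genuine prehistoric relations of $\GassT$. Provided this walk is non-empty — i.e.\ at least one original link translated to a real $\prec$ — it contains a prehistoric cycle of $\GassT$ (take a minimal sub-loop; a single link $\tilde i\prec\tilde i$ already is one), contradicting prehistoric-cycle-freeness of $\GassT$. So in this case we are finished.

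The remaining — and, I expect, genuinely delicate — case is the degenerate one in which \emph{every} link of the cycle translates to an equality, so that all the $\tilde i_m$ collapse to one proof-term family $\bar t^{\star}$ and the walk above is empty. To exclude this I would reopen the proof of the second preceding lemma: an equality-link can only come from its Case~2, that is from a $(\supset\GassBox)$ rule inserted in step~2 of the forgetful-projection construction as the image of a $(\supset!)$ rule of $\GassT$, and there the occurrence of $f_{i_{m+1}}$ created by the rule is mapped to a term $!s_m$ while the premise occurrence of $f_{i_m}$ is mapped to $s_m$. All of these $s_m$ and $!s_m$ are subterms of the single fixed top-level term $\tau^{\star}$ of $\bar t^{\star}$, and the intended contradiction is that chaining the $n$ relations $s_m\mapsto !s_m$ around the cycle cannot close up, since it would force, inside the finite set $\sub(\tau^{\star})$, a term that is a proper subterm of itself.

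The hard part will be making that last step precise, and this is where I expect the main obstacle to lie. The two lemmas only guarantee that the occurrences of a given $\GassBox$-family of $\GassT\GassCircli$ are all sent into \emph{one} proof-term family of $\GassT$, not that they are all sent to the \emph{same} term (a $(\GassBox\Cut)$-family, for example, can carry occurrences mapped to $s$, to $t$ and to $s\cdot t$ at once). Hence the $!$-depth bookkeeping has to be run along the specific occurrences used in the successive links of the cycle — matching, for each $m$, the term carried by the $f_{i_{m+1}}$-occurrence that ends link $m$ with the one that starts link $m+1$ — and one must verify that this really produces a strictly growing $!$-chain within $\sub(\tau^{\star})$. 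Once that occurrence-level bookkeeping is carried out, the degenerate configuration is ruled out and the contraposition, hence the corollary, is complete.
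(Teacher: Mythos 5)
Your main argument is, almost word for word, the paper's own proof: the paper disposes of this corollary in a single sentence, saying that from a cycle $i_0 \prec \cdots \prec i_n \prec i_0$ in $\GassT\GassCircli$ one gets a cycle in $\GassT$ ``by removing duplicates'' in the list $\tilde i_0, \ldots, \tilde i_n$ of mapped term families. Your first two paragraphs spell out exactly that. Where you go beyond the paper is in noticing that removing duplicates only yields a cycle when at least one link translates into a genuine relation $\tilde i_m \prec \tilde i_{m+1}$: in the degenerate case where every link becomes an equality, the list collapses to a single term family and no relation $\tilde i \prec \tilde i$ has been established, so nothing contradicts cycle-freeness of $\GassT$. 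The paper's proof is silent on this case, so you have correctly isolated the only point of the corollary that actually requires an argument.

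That said, as a standalone proof your proposal still has a gap, and you say so yourself: the degenerate case is not actually excluded. Your diagnosis of where equality links come from is right --- they arise only from the $(\supset \GassBox)$ rules inserted in step~2 of the projection, where the premise occurrence of $f_i$ is the box immediately below the newly introduced one, the two being mapped to $s$ and $!s$ --- and deriving a contradiction from a strictly increasing $!$-chain inside the finite set $\sub(\tau^{\star})$ is the natural strategy. But, as you observe, the two preceding lemmas control only term \emph{families}, not individual terms, so the occurrence that closes link $m$ and the occurrence that opens link $m+1$ need not carry the same term, and the monotonicity argument does not close up without additional occurrence-level bookkeeping (or a separate argument that one $\GassBox$-family of $\GassT\GassCircli$ cannot sit at two different heights of such $!$-towers). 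Until that is supplied the degenerate case remains open, in your write-up and equally in the paper's; given the paper's own disclaimer about errors, you should treat this as a genuine lacuna in the source rather than as something you were expected to reconstruct, but be aware that later results (the corollary on forgetful projections of proofs from non-self-referential inputs) lean on this statement.
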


\begin{proof}

The contraposition follows directly from the lemma as for any cycle
\[i_0 \prec  \cdots \prec  i_n \prec  i_0 \text{ in } \GassT \GassCircli  \] we get a cycle in \(\GassT \) by removing
duplicates in the list \(\tilde{i}_0, \ldots, \tilde{i}_n\) of mapped term
families \(\bar{t}_{\tilde{i}_0}, \ldots \bar{t}_{\tilde{i}_n}\).
\end{proof}

We will now come back to our example formula \(\lnot \GassBox (P \land  \lnot \GassBox P)\) from
section~\ref{self-referentiality}. Figure \ref{g3lpproof} contains a
proof of the same realization \(\lnot x{:}(P \land  \lnot t\cdot x{:}P)\) in G3lp as well as
the forgetful projection of that proof in G3s + (\GassBox Cut). For simplicity
we assumed that \((A \land  B \to  A)\) is an axiom A0 and therefore \(t\) is a
constant.

This proofs display the logical dependencies that make the formula
self-referential in quite a different way than the original G3s proof in
figure \ref{proofs}. There are three families of \(\GassBox \) in the G3s + (\GassBox Cut)
proof. Two are the same families as in the G3s proof, occur in the root
sequent and have a consistent polarity throughout the proof. We therefore
simply use the symbols \(\boxplus \) and \(\boxminus \) for this families. The third one
is part of the cut formula and therefore does not occur in the final
sequent and does not have consistent polarity throughout the proof. We
use \(\GassBox \) for occurrences of this family in the proof.

All left prehistoric relations of the proof are from left branch of the
cut where we have \(\boxminus  \prec _L \boxplus \) and the cycle \(\boxplus  \prec _L \boxplus \). Other than in
the G3s proof, the two \(\boxplus \) occurrences are used for different formulas
\(P\) and \(P \land  \lnot \GassBox P\) and the connection between the two is established
by the (\GassBox Cut) with \(\GassBox (P \land  \lnot \GassBox P \to  P)\). A similar situation is necessary
for any prehistoric cycle in a G3lp proof as we will show formally.

\afterpage{
\thispagestyle{plain}
\begin{landscape}
\begin{figure} \caption{G3lp proof} \label{g3lpproof}
\vspace{2mm}
\AXC{$P, \lnot t\cdot x{:}P, x{:}(P \land  \lnot t\cdot x{:}P) \supset  P$}
\AXC{$P, t\cdot x{:}P \supset  P$}
\RightLabel{$(: \supset )$}
\UIC{$t\cdot x{:}P \supset  P$}
\RightLabel{$(\supset  :)_t$}
\UIC{$P, t\cdot x{:}P, x{:}(P \land  \lnot t\cdot x{:}P) \supset  t\cdot x{:}P$}
\RightLabel{$(\supset  \lnot )$}
\UIC{$P, x{:}(P \land  \lnot t\cdot x{:}P) \supset  t\cdot x{:}P, \lnot t\cdot x{:}P$}
\RightLabel{$(\lnot  \supset )$}
\UIC{$P, \lnot t\cdot x{:}P, x{:}(P \land  \lnot t\cdot x{:}P) \supset  \lnot t\cdot x{:}P$}
\RightLabel{$(\supset  \land )$}
\BIC{$P, \lnot t\cdot x{:}P, x{:}(P \land  \lnot t\cdot x{:}P) \supset  P \land  \lnot t\cdot x{:}P$}
\RightLabel{$(\land  \supset )$}
\UIC{$P \land  \lnot t\cdot x{:}P, x{:}(P \land  \lnot t\cdot x{:}P) \supset  P \land  \lnot t\cdot x{:}P$}
\RightLabel{$(: \supset )$}
\UIC{$x{:}(P \land  \lnot t\cdot x{:}P) \supset  P \land  \lnot t\cdot x{:}P$}
\RightLabel{$(\supset  :)_t$}
\UIC{$P, x{:}(P \land  \lnot t\cdot x{:}P) \supset  x{:}(P \land  \lnot t\cdot x{:}P), t\cdot x{:}P$}

\AXC{$P, \lnot t\cdot x{:}P \supset  P$}
\RightLabel{$(\land  \supset )$}
\UIC{$P \land  \lnot t\cdot x{:}P \supset  P$}
\RightLabel{$(\supset  \to )$}
\UIC{$ \supset  P \land  \lnot t\cdot x{:}P \to  P$}
\RightLabel{$(\supset  :)_c$}
\UIC{$P, x{:}(P \land  \lnot t\cdot x{:}P) \supset  t{:}(P \land  \lnot t\cdot x{:}P \to  P), t\cdot x{:}P$}

\RightLabel{$(\supset  \cdot )$}
\BIC{$P, x{:}(P \land  \lnot t\cdot x{:}P) \supset  t\cdot x{:}P$}
\RightLabel{$(\lnot  \supset )$}
\UIC{$P, \lnot t\cdot x{:}P, x{:}(P \land  \lnot t\cdot x{:}P) \supset $}
\RightLabel{$(\land  \supset )$}
\UIC{$P \land  \lnot t\cdot x{:}P, x{:}(P \land  \lnot t\cdot x{:}P) \supset $}
\RightLabel{$(: \supset )$}
\UIC{$x{:}(P \land  \lnot t\cdot x{:}P) \supset $}
\RightLabel{$(\supset  \lnot )$}
\UIC{$\supset  \lnot x{:}(P \land  \lnot t\cdot x{:}P)$}
\DP

\vspace{2mm}

\AXC{$P, \lnot \boxplus P, \boxminus (P \land  \lnot \boxplus P) \supset  P$}
\AXC{$P, \GassBox P \supset  P$}
\RightLabel{$(\GassBox  \supset )$}
\UIC{$\GassBox P \supset  P$}
\RightLabel{$(\supset  \GassBox )$}
\UIC{$P, \GassBox P, \boxminus (P \land  \lnot \boxplus P) \supset  \boxplus P$}
\RightLabel{$(\supset  \lnot )$}
\UIC{$P, \boxminus (P \land  \lnot \boxplus P) \supset  \boxplus P, \lnot \GassBox P$}
\RightLabel{$(\lnot  \supset )$}
\UIC{$P, \lnot \boxplus P, \boxminus (P \land  \lnot \boxplus P) \supset  \lnot \GassBox P$}
\RightLabel{$(\supset  \land )$}
\BIC{$P, \lnot \boxplus P, \boxminus (P \land  \lnot \boxplus P) \supset  P \land  \lnot \GassBox P$}
\RightLabel{$(\land  \supset )$}
\UIC{$P \land  \lnot \boxplus P, \boxminus (P \land  \lnot \boxplus P) \supset  P \land  \lnot \GassBox P$}
\RightLabel{$(\GassBox  \supset )$}
\UIC{$\boxminus (P \land  \lnot \boxplus P) \supset  P \land  \lnot \GassBox P$}
\RightLabel{$(\supset  \GassBox )$}
\UIC{$P, \boxminus (P \land  \lnot \boxplus P) \supset  \boxplus (P \land  \lnot \GassBox P), \boxplus P$}

\AXC{$P, \lnot \GassBox P \supset  P$}
\RightLabel{$(\land  \supset )$}
\UIC{$P \land  \lnot \GassBox P \supset  P$}
\RightLabel{$(\supset  \to )$}
\UIC{$ \supset  P \land  \lnot \GassBox P \to  P$}
\RightLabel{$(\supset  \GassBox )$}
\UIC{$P, \boxminus (P \land  \lnot \boxplus P) \supset  \boxplus (P \land  \lnot \GassBox P \to  P), \boxplus P$}

\RightLabel{(\GassBox Cut)}
\BIC{$P, \boxminus (P \land  \lnot \boxplus P) \supset  \boxplus P$}
\RightLabel{$(\lnot  \supset )$}
\UIC{$P, \lnot \boxplus P, \boxminus (P \land  \lnot \boxplus P) \supset $}
\RightLabel{$(\land  \supset )$}
\UIC{$P \land  \lnot \boxplus P, \boxminus (P \land  \lnot \boxplus P) \supset $}
\RightLabel{$(\GassBox  \supset )$}
\UIC{$\boxminus (P \land  \lnot \boxplus P) \supset $}
\RightLabel{$(\supset  \lnot )$}
\UIC{$\supset  \lnot \boxminus (P \land  \lnot \boxplus P)$}
\DP
\end{figure}
\end{landscape}
}

\begin{lemma}

All occurrences belonging to a term family \(\bar{t}\) in a premise
\(S\) of any \((\supset  :)\) rule are occurrences of the top level term \(t\)
itself. \end{lemma}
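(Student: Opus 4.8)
The plan is to follow the family $\bar t$ through the proof starting from the premise $S$, in both directions, and to combine two facts. Above $S$ the remark following the definition of term families applies: for every G3lp rule the terms of a premise that correspond to a given term of the conclusion are subterms of it, so for a fixed occurrence $s$ in $S$ every occurrence linked to $s$ by a correspondence path lying inside $T\uhp S$ is an occurrence of a subterm of $s$; hence $s$ is the largest term among the occurrences of $\bar t$ contained in $T\uhp S$. Below $S$ the key point is that $(\supset:)_c$ and $(\supset:)_t$ introduce \emph{no} new term correspondences -- they do not appear in the definition of correspondence in G3lp -- and that the only correspondences that ever link a term occurrence to an occurrence of a strictly larger term are those added by $(\supset\cdot)$, $(\supset+)$, $(\supset!)$, in which the smaller term is the topmost term of an active formula and the larger one is the topmost term of the (necessarily right-hand) principal formula.

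I would then split on the two forms of the rule. For $(\supset:)_c$ we have $S=(\supset A)$ with conclusion $C=(\Gamma\supset\Delta,c{:}A)$, and every term occurrence of $S$ lies strictly inside $A$; tracing correspondence downward through the rule, such an occurrence $s$ is sent to the matching occurrence $\hat s$ of the \emph{same} term strictly inside the $A$-part of the side formula $c{:}A$. For $(\supset:)_t$ we have $S=(t_0{:}A\supset A)$ with conclusion $C=(t_0{:}A,\Gamma\supset\Delta,t_0{:}A)$; a term occurrence of $S$ is either strictly inside a copy of $A$, sent downward to the matching occurrence inside the $A$-part of a copy of $t_0{:}A$ in $C$, or it is the topmost occurrence of $t_0$ of the left side formula $t_0{:}A$, sent downward to the topmost $t_0$ of the left $t_0{:}A$ in $C$. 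In every case the occurrence $\hat s$ reached just below $S$ is an occurrence of the term $s$ itself, never of a larger term.

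It remains to argue that tracing $\hat s$ from $C$ towards the root never links it to a strictly larger term. Its carrier formula ($c{:}A$, or a copy of $t_0{:}A$) occurs in the premises of every rule through which $\hat s$ is traced, hence is never principal nor weakening below $C$ and persists as a side formula. If $\hat s$ lies strictly inside the $A$-part of the carrier, it can become a topmost term only after a decomposition of the carrier; a right-hand $c{:}A$ is never decomposed, while a left-hand $t_0{:}A$ is decomposed only by $({:}\supset)$, after which $\hat s$ (and everything below it) lives on the left. Since $(\supset\cdot)$, $(\supset+)$, $(\supset!)$ are right rules, neither a left-hand occurrence nor one buried inside a persisting side formula is ever the topmost term of one of their active formulas, so $\hat s$ is never linked upward; and when $\hat s$ is the topmost $t_0$ of a left copy of $t_0{:}A$, $({:}\supset)$ identifies it only with itself. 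The genuinely delicate sub-case is $(\supset:)_t$ with $t_0$ composite and $\hat s$ buried in the right (principal) copy of $t_0{:}A$: that copy may be consumed below $C$ as a right-hand active formula of a $(\supset\cdot)$, $(\supset+)$ or $(\supset!)$, but then its topmost term $t_0$, not $\hat s$, is the one linked to a larger term, while $\hat s$ either remains, still buried, inside the corresponding subformula of that rule's principal formula, or its family simply ends there; either way $\hat s$ never becomes a topmost term of an active formula. Combining the two directions, every occurrence of $\bar t$ is an occurrence of a subterm of $s$, so $s$ is the top level term $t$ and is thus an occurrence of $t$.

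I expect this last, downward, part to be the main obstacle: one must establish carefully that the carrier formula persists as a side formula, enumerate exactly which rules can act on it as it descends, and handle the composite-$t_0$ sub-case. Everything above $S$ is already covered by the ``corresponding premise terms are subterms'' property, so the real content of the lemma is this control of what the principal formula of a $(\supset:)$ rule can do once it is carried towards the root.
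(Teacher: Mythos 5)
Your proof is correct and follows essentially the same route as the paper's: only topmost terms on the right of the term-building rules are ever related to strictly larger terms, and an occurrence coming from the premise of a $(\supset :)$ rule stays on the left or strictly inside a right-hand formula all the way down to the root, so its term can never grow and must already be the top level term $t$. The only slip is the intermediate claim that the carrier formula ``persists as a side formula'' --- it can also be consumed as an active formula of $(\supset \cdot)$, $(\supset +)$ or $(\supset !)$ --- but you immediately handle that case, so nothing is lost.
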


\begin{proof}

All G3lp rules only relate different terms if they are top level terms
on the right. All occurrences of \(s \in  \bar{t}\) in a premise \(S\) of a
\((\supset  :)\) rule correspond either as part of a strict subformula on the
right or as part of a subformula on the left of the conclusion. A
formula on the left can only correspond to a subformula on the right as
a strict subformula. Therefore all corresponding occurrences of \(s\) on
the right in the remaining path up to the root are part of a strict
subformula and so all corresponding occurrences of \(s\), left or right,
in the remaining path are occurrences of the same term \(s\). As \(t\)
itself is a corresponding occurrence of \(s\) in that path, we get
\(t = s\). \end{proof}

\begin{lcorollary} \label{corollary} If \(i \prec  j\) for two term families
\(\bar{t_i}\) and \(\bar{t_j}\) of a G3lp proof, then there is \((\supset  :)\)
rule introducing an occurrence \(s \in  \bar{t_j}\) in a formula \(s{:}A\)
such that there is an occurrence of \(t_i\) in \(s{:}A\) (as a term, not
as a family, i.e.~the occurrence of \(t_i\) is not necessary in
\(\bar{t_i}\)). \end{lcorollary}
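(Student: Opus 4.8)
The plan is to unfold the definition of the prehistoric relation in G3lp and then read the statement off directly, using only the immediately preceding lemma. First I would record what \(i \prec j\) unpacks to: there is a \((\supset :)\) rule \(R\) (an instance of \((\supset :)_c\) or of \((\supset :)_t\)) with premise \(S\) that introduces an occurrence \(s\) belonging to \(\bar{t_j}\), and some occurrence \(u\) belonging to \(\bar{t_i}\) appears in \(S\). The occurrence introduced by a \((\supset :)\) rule can only be the topmost term of its principal formula \(s{:}A\): the terms strictly inside the body \(A\) already occur in the premise (for \((\supset :)_c\) the premise is \(\supset A\), for \((\supset :)_t\) it is \(s{:}A \supset A\)), so they correspond to premise occurrences and are not ``fresh''. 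Hence \(R\) has a principal formula of the shape \(s{:}A\) — with \(A\) an axiom of LP and \(s\) a constant when \(R=(\supset :)_c\), and \(A\) arbitrary when \(R=(\supset :)_t\). This is exactly the \((\supset :)\) rule and the formula \(s{:}A\) required by the statement, so it remains only to locate an occurrence of the term \(t_i\) inside \(s{:}A\).

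For that I would invoke the preceding lemma on \(u\): since \(u\) belongs to the term family \(\bar{t_i}\) and occurs in the premise \(S\) of a \((\supset :)\) rule, \(u\) is an occurrence of the top-level term \(t_i\) itself. Now I case-split on which rule \(R\) is. If \(R=(\supset :)_c\), then \(S\) is \(\supset A\), so \(u = t_i\) occurs inside \(A\); since \(A \in \sub(s{:}A)\) (as \(s{:}A = c{:}A\)), this yields an occurrence of \(t_i\) inside the principal formula \(s{:}A\). If \(R=(\supset :)_t\), then \(S\) is \(s{:}A \supset A\), and \(u\) occurs either inside the left copy of \(s{:}A\) — in which case we are done immediately — or inside the right formula \(A\), which is again a subformula of \(s{:}A\). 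In every case we have exhibited a \((\supset :)\) rule introducing an occurrence \(s \in \bar{t_j}\) in a formula \(s{:}A\) together with an occurrence of the term \(t_i\) in \(s{:}A\); note that nothing in the argument forces this occurrence of \(t_i\) to lie in \(\bar{t_i}\), which is precisely the content of the parenthetical remark in the statement.

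I do not expect a genuine obstacle: essentially all of the structural work has already been done in the preceding lemma, which pins the \(\bar{t_i}\)-occurrence appearing in the premise down to the literal maximal term \(t_i\). The only points demanding care are (i) being precise that ``introducing an occurrence of \(\bar{t_j}\)'' refers to the topmost term of the principal formula \(s{:}A\), not to some subterm buried inside \(A\) (which would instead correspond to a premise occurrence and could not be the one the prehistoric relation speaks of), and (ii) treating the \((\supset :)_c\) case separately, where the premise is merely \(\supset A\), so the \(\bar{t_i}\)-occurrence is forced into the axiom body \(A\) and one must record \(A \in \sub(c{:}A)\) to pull it back into the principal formula.
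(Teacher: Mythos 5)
Your proposal is correct and follows exactly the route the paper takes: the paper's own proof is the one-liner ``follows directly from the lemma and the definition of prehistoric relations for G3lp,'' and your argument simply fills in the details that the paper leaves implicit (unfolding \(i \prec j\), applying the preceding lemma to pin the premise occurrence down to the literal term \(t_i\), and observing that the premise of either \((\supset :)\) variant sits inside the principal formula \(s{:}A\)). No discrepancies to report.
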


\begin{proof}

Follows directly from the lemma and the definition of prehistoric
relations for G3lp. \end{proof}

The last corollary gives us a close relationship between prehistoric
relations in G3lp and occurrences of terms in \((\supset  :)\) rules. But it
does not differentiate between the two variants \((\supset  :)_c\) and
\((\supset  :)_t\) used for introducing elements from CS and input formulas
\(t{:}A\) . It is therefore necessary to expand the definition of
self-referentiality by considering all basic justifications and not only
the justification constants:

\begin{definition}[Inputs] 
The \emph{inputs} IN of a G3lp proof are
all LP formulas that are the principal formula of a \((\supset  :)_t\) or
\((\supset  :)_c\) rule. \end{definition}

Notice that the used constant specifications CS is a subset of the
inputs IN. The interpretation here is that \((\supset  :)_t\) introduces
arguments to Skolem style functions by proving the trivial identity
function \(t{:}A \to  t{:}A\). So we have two different clearly marked
sources of basic proofs in G3lp, on the one hand there are the constants
justifying known axioms, on the other hand there are presupposed
existing proofs or arguments to proof functions. Based on this expanded
notion, we can also expand the definition of self-referentiality to
input sets:

\begin{definition}[Self-Referential Inputs]
A input set IN is 
\begin{itemize}
\item
  \emph{directly self-referential} if there is a term
  \(t\) such that \(t{:}A(t) \in  \IN\).
\item
  \emph{self-referential} if there is a subset
  \(A \subseteq  \IN\) such that
  \[A := \{t_0{:}A(t_1), \ldots, t_{n-1}{:}A(t_0).\]
\end{itemize}
\end{definition}

With this definitions we finally arrive at our main result, a counterpart to Yu's
theorem.

\begin{theorem}
If the input set IN of a G3lp proof is non-self-referential, then the
proof is prehistoric-cycle-free. 
\end{theorem}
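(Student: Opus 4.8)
The plan is to prove the contrapositive: I assume that the G3lp proof $\GassT$ contains a prehistoric cycle and produce a subset of $\IN$ witnessing that $\IN$ is self-referential. Fix such a cycle $i_0 \prec i_1 \prec \cdots \prec i_{n-1} \prec i_0$ among the term families of $\GassT$, with indices read modulo $n$. For each link $i_k \prec i_{k+1}$ I apply Corollary~\ref{corollary} with $i := i_k$ and $j := i_{k+1}$: it hands me a $(\supset :)$ rule whose principal formula has the form $s_{k+1}{:}A_{k+1}$ with $s_{k+1} \in \bar{t_{i_{k+1}}}$, and such that the head term $t_{i_k}$ of the family $\bar{t_{i_k}}$ has an occurrence somewhere inside $s_{k+1}{:}A_{k+1}$. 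By the definition of inputs, every such principal formula lies in $\IN$, so I obtain $n$ members of $\IN$ arranged around the cycle; what remains is to link them by the relation ``the head term of the successor occurs in the predecessor'' that appears in the definition of a self-referential input set.

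The linking step uses $\bar{t} \subseteq \sub(t)$. Since $s_k \in \bar{t_{i_k}}$, the term $s_k$ is a subterm of $t_{i_k}$, and since $t_{i_k}$ occurs inside $s_{k+1}{:}A_{k+1}$, the term $s_k$ itself occurs inside $s_{k+1}{:}A_{k+1}$. As $s_k$ is precisely the head term of the input $s_k{:}A_k$, I get, for every $k$ modulo $n$, an input $s_{k+1}{:}A_{k+1} \in \IN$ inside which the head term $s_k$ of $s_k{:}A_k \in \IN$ occurs. This is a closed walk of length $n$ in the finite directed graph on $\IN$ that has an edge $F \to G$ whenever the head term of $G$ occurs in $F$; such a walk contains a simple cycle, and a simple cycle, re-indexed as $u_0{:}B_0, \dots, u_{m-1}{:}B_{m-1}$ with $u_{j+1 \bmod m}$ occurring in $u_j{:}B_j$, is a self-referential subset of $\IN$ (the case $m = 1$ being direct self-referentiality). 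Contraposing yields the theorem.

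The substance is carried by Corollary~\ref{corollary}; the remaining difficulty is bookkeeping. I must make sure the occurrence of $t_{i_k}$ supplied by the corollary actually sits in the formula component $A_{k+1}$, i.e.\ in a proper subterm position of $s_{k+1}{:}A_{k+1}$ and not merely as its head, because the notation $t{:}A(t')$ in the definition of self-referential inputs asks that $t'$ occur in the post-colon formula. For a $(\supset :)_c$ witness this is immediate, since its premise $\supset A$ forces the occurrence of $t_{i_k}$ into the axiom $A$. For a $(\supset :)_t$ witness, whose premise is $t{:}A \supset A$, I would use the preceding lemma (occurrences of a family in a $(\supset :)$-premise are occurrences of that family's head term) to argue that the witnessing occurrence can be relocated into the formula component of some input around the cycle, shrinking the cycle where necessary; one also notes that if several links are witnessed by the same rule, or if some inputs $s_k{:}A_k$ coincide, one passes to the induced shorter cycle. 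I expect this last mile --- pinning the occurrences into the formula parts and normalising the cycle --- rather than the chaining, to be where the work lies.
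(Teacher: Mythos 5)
Your argument is essentially the paper's own proof: contraposition, Corollary~\ref{corollary} applied to each link $i_k \prec i_{k+1}$ of the cycle, and chaining the resulting inputs via $\bar{t} \subseteq \sub(t)$ and transitivity of the subterm relation. The two bookkeeping points you flag as the remaining work --- forcing the witnessing occurrence of $t_{i_k}$ into the formula component rather than the head term, and extracting a simple cycle from the closed walk --- are precisely the points the paper's proof silently elides (it simply writes $t_{i_k} \in \sub(A_k)$ and takes the $n$ inputs to form a self-referential subset as they stand), so you are if anything more careful than the source.
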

\begin{proof}
We show the contraposition. Assume there is a prehistoric cycle
\[i_0 \prec  i_1 \prec  \cdots \prec  i_{n-1} \prec  i_0.\] By corollary \ref{corollary}
there exists formulas \(s_k{:}A_k\) in IN such that
\(t_{i_{k}} \in  \sub(A_k)\) and \(s_k \in  \sub(t_{i_{k'}})\) with
\(k' := k + 1 \mod n\). From the latter and
\(t_{i_{k' }} \in  \sub(A_{k'})\) follows \(s_k \in  \sub(A_{k' })\). So
\(\{s_k{:}A_k\ | 0 \leq k < n\} \subseteq  \IN\) is a self-referential subset of IN.
\end{proof}

\begin{corollary}
The forgetful projection \(A\GassCircli  \) of an LP formula \(A\) provable with a
non-self-referential input set IN is provable prehistoric-cycle-free in
G3s. \end{corollary}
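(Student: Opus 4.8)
The plan is to chain the theorem just proved together with the cut-elimination results of Section~\ref{cut-rules} and the invariance of the cycle-free fragment. Suppose $A$ is an LP formula provable with a non-self-referential input set $\IN$, and fix a G3lp proof $\GassT$ of $\supset A$ whose set of inputs is exactly $\IN$. (If the hypothesis is instead read Hilbert-style, as $\LP(\CS)\vdash A$ for a non-self-referential $\CS$, one first invokes the adequacy of G3lp for LP from Pulver~\cite{pulver2010} to obtain such a $\GassT$; here one should note that the extra inputs produced by $(\supset :)_t$ are trivial identities of the form $t{:}B\to t{:}B$ and so do not by themselves make $\IN$ self-referential.)

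First, by the preceding theorem, the G3lp proof $\GassT$ is prehistoric-cycle-free, precisely because its input set $\IN$ is non-self-referential. Second, pass to the forgetful projection $\GassT^\circ$, which by construction is a $\Gs+(\GassBox\Cut)$ proof of $\supset A^\circ$; by Corollary~\ref{forgetful} the proof $\GassT^\circ$ is again prehistoric-cycle-free, so $\supset A^\circ$ lies in the cycle-free fragment $(\Gs+(\GassBox\Cut))^\otimes$. Third, apply the theorem that the cycle-free fragments of $\Gs+(\GassBox\Cut)$, $\Gs+(\Cut)$ and $\Gs$ coincide: concretely, eliminate the $(\GassBox\Cut)$ rules using Theorem~\ref{boxcut} and then the $(\Cut)$ rules using Theorem~\ref{cut}, where Corollaries~\ref{boxcutcycle} and~\ref{cutcycle} guarantee that neither elimination step introduces a prehistoric cycle. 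What results is a prehistoric-cycle-free G3s proof of $\supset A^\circ$, which is exactly the claim.

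The main obstacle is not in the chaining itself, which is routine once the machinery of the section is available, but in the bookkeeping that links the abstract notion of a non-self-referential input set to a concrete G3lp proof object and then tracks prehistoric relations faithfully through the two successive proof transformations (forgetful projection, then cut elimination). Every faithfulness statement needed has, however, already been isolated — as Corollaries~\ref{cutprehist}, \ref{cutcycle}, \ref{boxcutcycle} and~\ref{forgetful} — so the argument reduces to quoting these in the correct order. The only point requiring genuine care is to make sure the hypothesis is read so that it actually supplies a G3lp proof whose input set is the given non-self-referential $\IN$; once that is granted, the rest is a mechanical composition of the earlier results.
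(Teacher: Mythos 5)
Your argument is correct and is essentially the paper's own proof: invoke the preceding theorem to conclude that the G3lp proof is prehistoric-cycle-free, push this through the forgetful projection via Corollary~\ref{forgetful}, and then eliminate the \((\GassBox\Cut)\) rules using Corollary~\ref{boxcutcycle} (and, underneath it, Corollary~\ref{cutcycle}) to land in cycle-free G3s. One caution about your parenthetical on the Hilbert-style reading: the inputs contributed by \((\supset :)_t\) are the principal formulas \(t{:}B\) themselves, not implications \(t{:}B \to t{:}B\), and they \emph{can} make \(\IN\) self-referential even when the CS is empty --- this is precisely the phenomenon exploited by the counterexample in Section~\ref{counterexample} --- so that aside should be dropped; the corollary as stated already supplies a G3lp proof with non-self-referential \(\IN\), which is all your main argument uses.
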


\begin{proof}
Suppose that \(\GassT \) is a proof of \(A\) from non-self-referential inputs IN. Then
\(\GassT \) is prehistoric-cycle-free as proven above. So by corollary
\ref{forgetful} \(\GassT \GassCircli  \) is a prehistoric-cycle-free proof of \(A\GassCircli  \) in
\(\Gs + (\GassBox \Cut)\). Finally there is a prehistoric-cycle-free proof of
\(A\GassCircli  \) in G3s by corollary \ref{boxcutcycle}. \end{proof}

\subsection{Counterexample}\label{counterexample}

The main result of the last section does not exactly match Yu's result.
We have shown that prehistoric cycles in G3s are sufficient for
self-referentiality but only for the expanded definition of
self-referentiality considering the set of all inputs IN. The question
arises if this expansion is actually necessary. The following
counterexample shows that indeed, prehistoric cycles in G3s are not
sufficient for needing a self-referential CS.

\begin{lemma}

The S4 formula \(A \equiv  \GassBox (P \land  \lnot \GassBox P \to  P) \to  \lnot \GassBox (P \land  \lnot \GassBox P)\) has a realization in
\(\LPG_0\). \end{lemma}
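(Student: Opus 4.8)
The plan is to exhibit an explicit realization $r$ together with a G3lp derivation of $A^r$ that never uses the constant rule $(\supset :)_c$; such a derivation is a proof in $\LPG_0$ (equivalently, by the equivalence of G3lp without $(\supset :)_c$ with $\LP_0$ noted above, it witnesses $\LP_0\vdash A^r$). Guided by the self-referential proof of figure~\ref{g3lpproof}, the idea is to realize the two negative ``outer'' boxes by justification variables and the single principal positive box by an application term, and — this is the whole point — to let the antecedent $\GassBox(P\land\lnot\GassBox P\to P)$, rather than a constant, supply the justification of the tautology $P\land\lnot\GassBox P\to P$. Concretely, with two distinct justification variables $x,y$, I take
\[
A^r\ :=\ y{:}(P\land\lnot(y\cdot x){:}P\to P)\ \to\ \lnot x{:}(P\land\lnot(y\cdot x){:}P),
\]
so that the outer boxes of the two $\GassBox(\cdots)$ subformulas of $A$ are realized by $y$ and $x$ respectively, and both occurrences of the inner $\GassBox P$ are realized by $y\cdot x$. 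One checks at once that $(A^r)^{\circ}=A$, since the forgetful projection only erases the terms $y$, $x$, $y\cdot x$.

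The main step is then to derive $\supset A^r$ in G3lp without $(\supset :)_c$. Writing $B:=P\land\lnot(y\cdot x){:}P$, the derivation runs as follows. First $(\supset\to)$ and $(\supset\lnot)$ bring us to $x{:}B,\ y{:}(B\to P)\supset{}$; then $({:}\supset)$ on $x{:}B$ followed by $(\land\supset)$ yields $P,\ \lnot(y\cdot x){:}P,\ x{:}B,\ y{:}(B\to P)\supset{}$, and $(\lnot\supset)$ moves the negated formula to the succedent, giving $P,\ x{:}B,\ y{:}(B\to P)\supset (y\cdot x){:}P$. Now apply $(\supset\cdot)$ with $s=y$, $t=x$: its two premises additionally require $y{:}(B\to P)$ and $x{:}B$ on the right, and each of these is discharged by $(\supset :)_t$ from the matching assumption already on the left; the side premises of those $(\supset :)_t$ steps, namely $y{:}(B\to P)\supset B\to P$ and $x{:}B\supset B$, reduce by $({:}\supset)$ and purely propositional rules to $(Ax)$ on the atom $P$, with one further nested $(\supset :)_t$ on the term $y\cdot x$ needed to close the one leaf whose only principal ``atom'' is $(y\cdot x){:}P$. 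Since this derivation uses only the classical rules together with $({:}\supset)$, $(\supset :)_t$ and $(\supset\cdot)$ — and in particular never $(\supset :)_c$, $(\supset +)$ or $(\supset !)$ — it is a $\LPG_0$-proof, which is what is claimed.

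I do not expect a genuine obstacle: the only real content is recognising that once one insists on using application, the choice of $r$ is essentially forced — the inner box of the antecedent must be realized by the same term as what $x$ justifies (so the application typechecks), and that term must in turn be the application term $y\cdot x$ itself. The one mildly delicate bookkeeping point in writing out the tree is that a leaf of the form $(y\cdot x){:}P,\ P,\ x{:}B\supset(y\cdot x){:}P$ is not closed by $(Ax)$ (as $(y\cdot x){:}P$ is not atomic) but by an extra $(\supset :)_t$ on $y\cdot x$. It is worth flagging, as motivation for reading this lemma as a counterexample, that the input set of this proof contains $y{:}(P\land\lnot(y\cdot x){:}P\to P)$ and is therefore directly self-referential in the expanded sense of Section~\ref{g3lp} — consistently with the main theorem there — yet, because the self-reference passes through the \emph{variable} $y$ rather than through any constant, the empty constant specification suffices, so $A$ is realizable already in $\LP_0\subseteq\LP(\CS^\odot)$.
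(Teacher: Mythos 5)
Your proposal is correct and takes essentially the same route as the paper: the same realization $A^r \equiv y{:}(P\land\lnot y\cdot x{:}P\to P)\to\lnot x{:}(P\land\lnot y\cdot x{:}P)$, with the same key idea that the justification of the tautology $P\land\lnot(y\cdot x){:}P\to P$ is supplied by the antecedent variable $y$ rather than by a constant. The only cosmetic difference is that you write out the full G3lp tree and discharge the hypothesis via $(\supset{:})_t$, whereas the paper obtains the derivation by replacing the $(\supset{:})_c$ step of figure~\ref{g3lpproof} with an assumption and then invoking the deduction theorem.
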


\begin{proof}

Set \(A^r \equiv  y{:}(P \land  \lnot y\cdot x{:}P \to  P) \to  \lnot x{:}(P \land  \lnot y\cdot x{:}P)\). We have
\[y{:}(P \land  \lnot y\cdot x{:}P \to  P) \vdash _{\LPG_0} \lnot x{:}(P \land  \lnot y\cdot x{:}P)\] by the same
derivation as for \(\LP \vdash  \lnot x{:}(P \land  \lnot t\cdot x{:}P)\) replacing the
introduction of \(t{:}(P \land  \lnot t\cdot x{:}P \to  P)\) by the assumption
\(y{:}(P \land  \lnot y\cdot x{:}P \to  P)\) and \(t\) by \(y\). So by the deduction
theorem
\(\LPG_0 \vdash  y{:}(P \land  \lnot y\cdot x{:}P \to  P) \to  \lnot x{:}(P \land  \lnot y\cdot x{:}P)\).\footnote{If
  we assume that \(P \land  \lnot y\cdot x{:}P \to  P\) is an axiom A0, this matches the
  more general result in corollary 7.2 in Artemov~\cite{artemov2001}: \(\LP(\CS) \vdash  F\) if
  and only if \(\LPG_0 \vdash  \CS \supset  F\).} \end{proof}

\begin{lemma}

The S4 formula \(\GassBox (P \land  \lnot \GassBox P \to  P) \to  \lnot \GassBox (P \land  \lnot \GassBox P)\) has no
prehistoric-cycle-free proof. \end{lemma}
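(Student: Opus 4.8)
The plan is to reduce the claim to the (known) impossibility of a cycle-free G3s derivation of the self-referential formula \(\lnot\GassBox(P\land\lnot\GassBox P)\), via a single cut. Abbreviate \(C := P\land\lnot\GassBox P\), so the formula reads \(\GassBox(C\to P)\to\lnot\GassBox C\) and \(C\to P\) is a propositional tautology. First observe that \(\supset\GassBox(C\to P)\) has a one-line G3s proof: prove \(\supset C\to P\) by \((\supset\to)\), \((\land\supset)\) and an axiom, then apply \((\supset\GassBox)\). This proof is prehistoric-cycle-free — its only \((\supset\GassBox)\) introduces the outer box, and the premise \(\supset C\to P\) contains just one further \(\GassBox\)-occurrence, the \(\GassBox P\) inside \(C\), which is never principal and so is a non-principal positive family without a prehistoric family of its own. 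Weakening \(\lnot\GassBox C\) into the succedent — admissible by Lemma~\ref{l:contraction:1}, and, by the same lemma, introducing no new prehistoric relations — yields a prehistoric-cycle-free G3s proof \(\GassT_L\) of \(\supset\lnot\GassBox C,\GassBox(C\to P)\).

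Now suppose, for contradiction, that \(\GassBox(C\to P)\to\lnot\GassBox C\) had a prehistoric-cycle-free G3s proof \(\GassT\). Its last rule is forced to be \((\supset\to)\) (the succedent is a single non-atomic formula and the antecedent is empty), so the subtree \(\GassT_R\) above it is a prehistoric-cycle-free G3s proof of \(\GassBox(C\to P)\supset\lnot\GassBox C\). Joining \(\GassT_L\) and \(\GassT_R\) by a \((\Cut)\) on the formula \(\GassBox(C\to P)\) gives a \(\Gs+(\Cut)\)-proof of \(\supset\lnot\GassBox C\). Provided this combined proof is prehistoric-cycle-free (checked below), Theorem~\ref{cut} lets us eliminate the cut, and by Corollary~\ref{cutcycle} this introduces no prehistoric cycle, so we obtain a prehistoric-cycle-free G3s proof of \(\supset\lnot\GassBox(P\land\lnot\GassBox P)\). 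But \(\lnot\GassBox(P\land\lnot\GassBox P)\) is self-referential (cf.\ Kuznets~\cite{kuznets2010} and Section~\ref{self-referentiality}), so by Yu's necessity theorem~\cite{yu2010} together with the lemma that a prehistoric cycle exists iff a left one does, it has no prehistoric-cycle-free G3s proof — a contradiction.

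It remains to check that joining \(\GassT_L\) and \(\GassT_R\) by \((\Cut)\) on \(\GassBox(C\to P)\) introduces no prehistoric cycle, where by Definition~\ref{local2} one now tracks all \(\GassBox\)-families regardless of polarity. The prehistoric edges of the combined proof are those of \(\GassT_R\) together with the edges contributed by \(\GassT_L\); since \(\GassT_L\) contains exactly one \((\supset\GassBox)\), namely the one introducing the family \(\alpha\) of \(\GassBox(C\to P)\) with the family \(\beta\) of the inner \(\GassBox P\) occurring in its premise, and every family born inside the weakened \(\lnot\GassBox C\) is isolated, \(\GassT_L\) contributes only the edge \(\beta\prec\alpha\). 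A cycle using this edge would require a path \(\alpha\prec^+\beta\) inside \(\GassT_R\); but in \(\GassT_R\) the formula \(\GassBox(C\to P)\) lies in the antecedent, so after \((\GassBox\supset)\), \((\to\supset)\), \((\supset\land)\) and \((\supset\lnot)\) its inner \(\GassBox P\) always migrates into the antecedent of \(C\to P\), i.e.\ \(\beta\) is a negative family in \(\GassT_R\) and hence a source of the prehistoric graph — so no such path exists. Any cycle avoiding the edge \(\beta\prec\alpha\) would be a cycle inside \(\GassT_R\), excluded because \(\GassT_R\) is prehistoric-cycle-free and, being a plain G3s proof, is cycle-free also for Definition~\ref{local2}. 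Hence the combined proof is prehistoric-cycle-free, which completes the argument.

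A cut-free alternative runs directly through proof search: by the subformula property the only positive \(\GassBox\)-family is the inner \(\GassBox P\) of \(\GassBox C\), say \(F_4\), so a prehistoric cycle can only be \(F_4\prec F_4\); any proof must decompose \(\GassBox C\), hence \(C=P\land\lnot\GassBox P\), hence must prove \(\GassBox P\) in family \(F_4\) by a \((\supset\GassBox)\) whose premise \(\GassBox\Gamma\supset P\) re-uses \(\GassBox C\) — placing \(F_4\) in the pre-history and yielding \(F_4\prec F_4\) — unless one first moves the inner \(\GassBox P\) of \(\GassBox(C\to P)\) into the antecedent, which forces a \((\to\supset)\) on \(C\to P\) whose right premise reproduces the same obligation, a regress that no finite proof survives. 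The delicate point in this route is making the impossibility of that "pre-loading" escape rigorous; the cut-based argument avoids it, its only real step being the cycle-check of the previous paragraph.
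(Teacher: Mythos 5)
Your argument is correct in its main line but takes a genuinely different route from the paper. The paper proves the lemma by direct, exhaustive proof search: it inverts the root sequent to \(\GassBox(P\land\lnot\GassBox P\to P), \GassBox(P\land\lnot\GassBox P)\supset\), case-analyses the possible last rules, shows that the only way to avoid the cycle on the inner \(\GassBox P\) is to reach the sequent \(\GassBox(P\land\lnot\GassBox P\to P)\supset P\), and refutes that sequent with a one-world Kripke countermodel. You instead argue by reduction: writing \(C\) for \(P\land\lnot\GassBox P\), you cut the trivially cycle-free proof of \(\supset\GassBox(C\to P)\) against a hypothetical cycle-free proof of \(\GassBox(C\to P)\supset\lnot\GassBox C\), verify that the combined \(\Gs+(\Cut)\) proof is cycle-free (your analysis is sound: the left subtree contributes only the edge \(\beta\prec\alpha\), and \(\beta\) is never the principal family of a \((\supset\GassBox)\) rule anywhere in the combined proof, so no cycle can pass through it), and then use Theorem~\ref{cut} with Corollary~\ref{cutcycle} to extract a cycle-free G3s proof of \(\lnot\GassBox(P\land\lnot\GassBox P)\), contradicting Yu's necessity theorem. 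This is shorter and more conceptual---it is precisely the ``modus ponens inside the cycle-free fragment'' observation the paper makes after the theorem on cycle-free fragments---and it explains structurally why prefixing the boxed tautology cannot break the cycle. What the paper's elementary search argument buys in exchange is self-containment: it needs neither Yu's theorem nor any self-referentiality result about a second formula.

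That trade-off is also the one genuine weak spot of your proof: you need that \(\lnot\GassBox(P\land\lnot\GassBox P)\) is a self-referential S4 theorem, i.e.\ that no realization of it is provable in \(\LP(\CS^\odot)\). The paper explicitly cautions in Section~\ref{self-referentiality} that this does \emph{not} follow directly from Kuznets's result for \(\lnot\GassBox\lnot(S\to\GassBox S)\), since realizability is not preserved under logical equivalence, and it only sketches how the transfer ``should be fairly straightforward.'' Your contradiction therefore rests on a fact the paper asserts but never establishes; to make your route stand on its own you must supply that transfer argument or cite a proof of self-referentiality for this exact formula. Finally, your cut-free ``alternative'' is essentially the paper's actual proof, and the regress you were unable to make rigorous is exactly the point the paper settles with the Kripke countermodel for \(\GassBox(P\land\lnot\GassBox P\to P)\to P\).
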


\begin{proof}

By inversion for G3s in one direction and an easy deduction in the
other, we have \[G3s \vdash  \supset  \GassBox (P \land  \lnot \GassBox P \to  P) \to  \lnot \GassBox (P \land  \lnot \GassBox P)\] iff
\[G3s \vdash  \GassBox (P \land  \lnot \GassBox P \to  P), \GassBox (P \land  \lnot \GassBox P) \supset. \] In both directions the proofs
remain prehistoric-cycle-free if the other proof was
prehistoric-cycle-free. For a proof of \(\GassBox (P \land  \lnot \GassBox P \to  P), \GassBox (P \land \lnot \GassBox P) \supset \)
we have two possibilities for the last rule:

1.~case: The last rule is a \((\GassBox  \supset )\) rule with \(\GassBox (P \land  \lnot \GassBox P \to  P)\) as
the principal formula. Then the following proof tree shows that we need
a proof for the sequent \(P, \GassBox (P \land  \lnot \GassBox P \to  P), \GassBox (P \land \lnot \GassBox P) \supset \) which is just
the original sequent weakened by \(P\) on the left:

\noindent\makebox[\textwidth]{
\AXC{$P \land \lnot \GassBox P, \GassBox (P \land  \lnot \GassBox P \to  P), \GassBox (P \land \lnot \GassBox P) \supset  P \land \lnot \GassBox P$}
\RightLabel{$(\GassBox  \supset )$}
\UIC{$\GassBox (P \land  \lnot \GassBox P \to  P), \GassBox (P \land \lnot \GassBox P) \supset  P \land \lnot \GassBox P$}
\AXC{$P, \GassBox (P \land  \lnot \GassBox P \to  P), \GassBox (P \land \lnot \GassBox P) \supset $}
\RightLabel{$(\to \supset )$}
\BIC{$P \land  \lnot \GassBox P \to  P, \GassBox (P \land  \lnot \GassBox P \to  P), \GassBox (P \land \lnot \GassBox P) \supset $}
\RightLabel{$(\GassBox  \supset )$}
\UIC{$\GassBox (P \land  \lnot \GassBox P \to  P), \GassBox (P \land \lnot \GassBox P) \supset $}
\DP
}

So for the remaining of the proof we will have to check if weakening
\(P\) on the left helps to construct a prehistoric-cycle-free proof.

2.~case: The last rule is a \((\GassBox  \supset )\) rule with \(\GassBox (P \land  \lnot \GassBox P)\) as the
principal formula. We get as premise the sequent
\[P \land \lnot \GassBox P, \GassBox (P \land  \lnot \GassBox P \to  P), \GassBox (P \land \lnot \GassBox P) \supset, \] which again by inversion and an
easy deduction is provable prehistoric-cycle-free iff
\(P, \GassBox (P \land  \lnot \GassBox P \to  P), \GassBox (P \land \lnot \GassBox P) \supset  \GassBox P\) is provable
prehistoric-cycle-free. It is clear that using \((\GassBox  \supset )\) rules on this
sequent just adds additional copies of the existing formulas by the same
arguments. So by contraction if there is a prehistoric-cycle-free proof
for this sequent, then there is also one ending in a \((\supset  \GassBox )\) rule. The
premise of this rule has to have the form \[\GassBox (P \land  \lnot \GassBox P \to  P) \supset  P\] to
avoid a prehistoric cycle. But the following Kripke model shows that
\[\GassBox (P \land  \lnot \GassBox P \to  P) \to  P\] is not a theorem of S4 and therefore not provable
at all: \[W := {w}, val(P) := \emptyset, R := \{(w, w)\}.\] We have
\(w \Vdash P \land  \lnot \GassBox P \to  P\) because \(w \Vdash \lnot P\) and therefore also
\[w \Vdash \lnot (P \land  \lnot \GassBox P).\] As \(w\) is the only world we get
\(w \Vdash \GassBox (P \land  \lnot \GassBox P \to  P)\) which leads to the final
\(w \Vdash \lnot (\GassBox (P \land  \lnot \GassBox P \to  P) \to  P)\) again because \(w \Vdash \lnot P\).

As all possibilities for a prehistoric-cycle-free proof of
\[\GassBox (P \land  \lnot \GassBox P \to  P), \GassBox (P \land \lnot \GassBox P) \supset \] are exhausted, there is no such proof
and therefore also no prehistoric-cycle-free proof of
\(\supset  \GassBox (P \land  \lnot \GassBox P \to  P) \to  \lnot \GassBox (P \land \lnot \GassBox P)\) \end{proof}

\begin{theorem}

There exists a S4-theorem \(A\) and a LP-formula \(B\) such that \(A\)
has no prehistoric-cycle-free G3s-proof, \(B^\circ = A\) and
\(\LP(\CS^\odot ) \vdash  B\) \end{theorem}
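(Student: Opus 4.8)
The plan is to assemble the two preceding lemmas, instantiating the existential witnesses they already provide: set $A \equiv \GassBox(P \land \lnot\GassBox P \to P) \to \lnot\GassBox(P \land \lnot\GassBox P)$ and $B \equiv y{:}(P \land \lnot y\cdot x{:}P \to P) \to \lnot x{:}(P \land \lnot y\cdot x{:}P)$. First I would check $B^\circ = A$ by unfolding the forgetful projection: $(\cdot)^\circ$ commutes with $\to$ and sends each $t{:}C$ to $\GassBox C^\circ$, so $\bigl(y{:}(P \land \lnot y\cdot x{:}P \to P)\bigr)^\circ = \GassBox(P \land \lnot\GassBox P \to P)$ and $\bigl(x{:}(P \land \lnot y\cdot x{:}P)\bigr)^\circ = \GassBox(P \land \lnot\GassBox P)$, whence $B^\circ = A$.

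Second, the first lemma gives $\LPG_0 \vdash B$, so $B$ is a theorem of $\LP_0 = \LP(\emptyset)$. The empty constant specification is trivially not self-referential, so by the convention defining $\LP(\CS^\odot) \vdash \cdot$ we obtain $\LP(\CS^\odot) \vdash B$. Moreover $\LP(\emptyset) \vdash B$ yields $\LP \vdash B$, and the forgetful-projection theorem ($\LP \vdash C \Rightarrow S4 \vdash C^\circ$) then gives $S4 \vdash B^\circ = A$; hence $A$ is an S4-theorem, as required.

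Third, the second lemma is precisely the remaining conjunct: $A$ has no prehistoric-cycle-free G3s-proof. Collecting the three facts proves the theorem. The assembly itself is routine bookkeeping; the genuine difficulty is internal to the second lemma — the case analysis on the last rule of a cut-free G3s-derivation of $\GassBox(P \land \lnot\GassBox P \to P), \GassBox(P \land \lnot\GassBox P) \supset{}$, together with the argument that avoiding a prehistoric cycle forces the premise $\GassBox(P \land \lnot\GassBox P \to P) \supset P$, which is refuted by the one-point reflexive Kripke model $W = \{w\}$, $val(P) = \emptyset$. That is the step I would expect to be the main obstacle; once it is in hand, the present theorem follows immediately.
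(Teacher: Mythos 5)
Your proposal is correct and follows essentially the same route as the paper: both simply instantiate $A$ and $B$ with the witnesses from the two preceding lemmas and combine them. The only cosmetic difference is how the S4-theoremhood of $A$ is justified --- you invoke the forgetful projection theorem on $\LP \vdash B$, whereas the paper just observes that the consequent $\lnot\GassBox(P \land \lnot\GassBox P)$ is already an S4 theorem; both are routine.
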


\begin{proof}

\(A := \GassBox (P \land  \lnot \GassBox P \to  P) \to  \lnot \GassBox (P \land  \lnot \GassBox P)\) is a theorem of S4, as
\[\lnot \GassBox (P \land  \lnot \GassBox P)\] already is a theorem of S4. By the previous lemma, there
is no prehistoric-cycle-free proof for \(A\) and by the first lemma
\[B := y{:}(P \land  \lnot y\cdot x{:}P \to  P) \to  \lnot x{:}(P \land  \lnot y\cdot x{:}P)\] is a realization
of \(A\) provable in \(\LP_0\) and therefor also in \(\LP(\CS^\odot )\).
\end{proof}

Finally the question arises if prehistoric cycles are also a necessary
condition on self-referential S4 theorems under the expanded definition.
For this it is necessary to clarify the term inputs for Hilbert style
proofs used in the original definition of LP and in the realization
theorem (thm.~\ref{realization}) as there is no direct equivalent for
\((\supset  :)_t\) rules in the Hilbert style LP calculus as there is for
\((\supset  :)_c\) rules. Looking at the adequacy proof for G3lp, \((\supset  :)_t\)
is used only for the base cases \(A \supset  A\) in proofing axioms of LP. In
the other direction, a \((\supset  :)_t\) rule is translated first to the
trivial proof for \(t{:}A \vdash _{\LP} t{:}A\), but the usage of deduction
theorem could change that to a different proof for example for
\(\vdash _{\LP} t{:}A \to  t{:}A\).

So far, the situation seems pretty clear cut, and we have inputs as
assumptions or as subformulas with negative polarity of formulas proven
by the deduction theorem. This also matches the notion that \((\supset  :)_t\)
rules introduce the arguments of Skolem functions used in the LP
realization. Unfortunately the deductions as constructed in the
deduction theorem sometimes use existing formulas with swapped
polarities. That is, in a deduction constructed by the deduction
theorem, subformulas can occur with negative polarity which only
occurred with positive polarity in the original deduction. Moreover
formulas can be necessary to derive the final formula without occurring
in that formula. So there is no guarantee that all necessary inputs
actually occur in the final formula or that a formula occurring with
negative polarity somewhere in the proof is an input.

So we have no clear definition of inputs in the original definition of
LP matching the definition of inputs in G3lp, and therefore also
currently no way to expand Yu's result to all inputs. But we can
stipulate that the inputs of a derivation \(d\) as constructed by the
realization theorem are exactly the
realizations of formulas \(\boxminus _iA\) with negative polarity in the original
G3s proof. As G3s enjoys the subformula property, that means all inputs
used in the proof thus constructed are actually also inputs in the final
formula of the proof, a property which does not necessarily hold for all
derivations as discussed above. We have to assume without proof that
this definition of inputs somehow matches the exact definition given in
the context of G3lp proofs. That is, there exists a G3lp proof for a G3s
proof where only realizations of formulas with negative polarity are
introduced by \((\supset  :)_t\). Given this stipulations and assumptions, the
following sketch of a proof tries to argue for the necessity of
prehistoric cycles for the expanded definition of self-referentiality:

\begin{conjecture}

If a S4-theorem \(A\) has a left-prehistoric-cycle-free G3s-proof, then
there is a LP-formula \(B\) s.t. \(B^\circ = A\) and \(\LP(\IN^\odot ) \vdash  B\).
\end{conjecture}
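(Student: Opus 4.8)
The plan is to strengthen Yu's realization-based proof of the necessity theorem so that it controls the whole input set of the associated G3lp-proof, not only its constant specification. Start from a left-prehistoric-cycle-free G3s-proof $T$ of $\supset A$; by the lemma relating prehistoric and left prehistoric cycles, $T$ is prehistoric-cycle-free. Apply the standard proof-theoretic realization procedure to $T$. Since $T$ has no left prehistoric cycle, the relation $\prec_{L}$ on its principal families is acyclic, so one may process the principal families in a $\prec_{L}$-topological order: each principal family $i$ receives a term $t_i$ built, by application and sum, from a fresh constant and from the terms $t_h$ already assigned to the families with $h\prec_{L} i$ (the ones occurring on the left of the relevant $(\supset\GassBox)$-premises), while negative and non-principal positive families receive distinct fresh variables. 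This yields a normal realization $r$, hence $B:=A^r$ with $B^\circ=A$, together with an $\LP(\CS)$-derivation for the generated $\CS$; unwinding it along $T$ gives a G3lp-proof $\GassT$ of $\supset B$ whose forgetful projection is $T$, in which the $(\supset:)_c$-rules carry the constants of $\CS$ and --- using the stipulation introduced just before the conjecture --- the $(\supset:)_t$-rules introduce exactly the realizations $v{:}C^r$ of the negative-polarity subformulas $\boxminus C$ of $T$, so $\IN=\CS\cup\{\,v{:}C^r : \boxminus C \text{ a negative family of }T\,\}$.

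The core step is a faithfulness lemma: the construction records $\prec_{L}$ in the subterm structure of the realizing terms, i.e.\ $t_h\in\sub(D)$ for the formula $D$ justified in a $(\supset:)$-rule introducing $t_i$ only if $h\prec_{L}^{+}i$ in $T$, and never if $h=i$. For the $(\supset:)_c$-inputs this is precisely the effect of the topological-order construction (the constant for $t_i$ justifies an axiom containing only the already-built $t_h$ with $h\prec_{L} i$); for the $(\supset:)_t$-inputs one side holds vacuously, because a negative family of a G3s-proof is never introduced by a $(\supset\GassBox)$-rule and so has no $\prec_{L}$-predecessor. Now suppose, for contradiction, that $A\subseteq\IN$ is a self-referential subset, $A=\{s_0{:}C_0,\ldots,s_{n-1}{:}C_{n-1}\}$ with $s_k$ occurring in $C_{k+1\bmod n}$. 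Reading each $s_k$ back as the term $t_{h_k}$ of a family $h_k$ of $T$ and applying Corollary \ref{corollary} together with the faithfulness lemma, the occurrences $s_k\in\sub(C_{k+1\bmod n})$ force $h_k\prec_{L}^{+}h_{k+1\bmod n}$ for all $k$, i.e.\ a closed $\prec_{L}$-chain $h_0\prec_{L}^{+}\cdots\prec_{L}^{+}h_0$, which is a left prehistoric cycle in $T$ --- contradiction. Hence $\IN$ is non-self-referential, i.e.\ $\LP(\IN^\odot)\vdash B$. (One also checks, exactly as in Corollary \ref{boxcutcycle} for $(\GassBox\Cut)$, that the $(\supset\cdot)$-rules of $\GassT$ introduce no prehistoric relation beyond those already present, so that $\GassT$ is itself prehistoric-cycle-free.)

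The main obstacle, and the reason this stays a conjecture, is the combination of the faithfulness lemma with the passage from a Hilbert-style realized derivation to a genuine G3lp-proof. That passage goes through the deduction theorem, which --- as already noted --- can introduce subformulas with flipped polarity and auxiliary formulas absent from $B$; consequently the identification of the $(\supset:)_t$-inputs with realizations of negative-polarity subformulas of $T$ is only stipulated, and one would first have to exhibit a G3lp-proof of $\supset B$ realizing $T$ in which that identification genuinely holds. Even granted such a proof, proving ``$\prec_{L}$ is recorded in term structure'' requires checking that the lifting steps never substitute a term for a provisional variable in a way that injects $t_h$ into a justified formula beyond what the $(\supset\GassBox)$-premises already license, and that summing the provisional terms for the several $(\supset\GassBox)$-introductions of one principal family does not splice two otherwise-disjoint $\prec_{L}$-chains into a spurious cycle. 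These are exactly the points at which the paper's sketch stops short of a proof.
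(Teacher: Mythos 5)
Your proposal follows essentially the same route as the paper's own proof idea: realize the left-prehistoric-cycle-free G3s-proof so that any reuse of a basic justification term inside an input formula forces a prehistoric relation between the corresponding principal families, whence a self-referential input set would yield a prehistoric (hence left prehistoric) cycle. Like the paper, you stop at exactly the same unproven points --- the stipulated identification of the \((\supset :)_t\)-inputs with realizations of negative-polarity subformulas and the polarity-flipping behaviour of the deduction theorem --- so the statement remains a conjecture in both treatments.
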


\emph{Proof idea.}
Given a left-prehistoric-cycle-free G3s-proof \(\GassT  = (T, R)\) for \(A\),
use the realization theorem to construct a realization function \(r_T^N\) and a
constant specification \(\CS^N\) such that \(B := r_T^N(\an_T(A))\) is a
realization of \(A\) and \(\LP \vdash  B\) by the constructed deduction \(d\).
To simplify the following, we do not enforce a injective constant
specification here and allow multiple proof constants for the same
formula. From this it follows that any constant \(c_{i,j,k}\) is
exclusively used when handling the \((\supset  \GassBox )\) rule \(R_{i,j}\).

Assume for a contradiction that the set of inputs IN used for \(d\) is
self-referential. That is there is a subset
\(\{t_0{:}A_0(t_1), \ldots, t_{n-1}{:}A_{n-1}(t_0)\}\) of IN. The
occurrences of \(t_{k+1 \mod n}\) in \(t_k{:}A_i\) have to be a subterm
of realization term for a principal family \(i_k\) as the construction
of such realization terms are the only place where the constants and
variables of IN can get reused. For every consecutive pair of principal
families \(i_k\) and \(i_{k'}\) thus given, there is a constant or
variable \(t_{k'}\) such that \(t_{k'}\) occurs in the realization term
for \(i_k\) and there is a subterm of the realization term for
\(i_{k'}\) occurring in \(t_{k'}{:}A_{k'} \in  \IN\). We distinguish the
following cases:

1.~case: \(t_{k'}\) is a variable \(x_j\). Then the formula
\(t_{k'}{:}A_{k'}\) is the realization of an annotated S4 formula
\(\boxminus _jA(\boxplus _{i_{k'}})\). That formula occurs on the left of a \((\supset  \GassBox )\)
rule introducing an occurrence of \(\boxplus _k\) as \(x_j\) is in the
realization term of \(\boxplus _k\). Therefore we have \(i_{k'} \prec  i_k\).

2.~case: \(t_{k'}\) is a constant \(c_{j,l,m}\). Then the formula
\(t_{k'}{:}A_{k'}\) is added to the CS when handling a \((\supset  \GassBox )\) rule
\(R_{j,l}\) introducing an occurrence of \(\boxplus _j\). \(c_{j,l,m}\) is in
the realization term of \(\boxplus _k\) so \(R_{j,l}\) lies in a prehistory of
\(\boxplus _k\). At the same time, the term \(t_{k'}\) occurs in the formula
\(c_{j,l,m}{:}A_{k'}\) as part of a term \(t\) used in the construction
of the realization of \(\boxplus _{k'}\). As \(c_{j,l,m}{:}A_{k'}\) is
introduced when realizing \(R_{j,l}\), \(A_{k'}\) occurs in the proof of
the premise and there has to be an occurrence of \(\boxplus _{k'}\) in the
prehistory of \(R_{j,l}\). Together we get that \(\boxplus _{k'}\) occurs in a
prehistory of \(\boxplus _k\) and therefore \(i_{k'} \prec  i_k\) by lemma
\ref{global}.-

So for all \(k < n\) we get \(i_{k'} \prec  i_k\) and the list of principal
families \(i_0, \ldots, i_{n-1}\) is therefore a prehistoric cycle in
\(\GassT \). 

\section{Conclusion}\label{conclusion}

We defined prehistoric relations for Gentzen systems with cut rules and finally for
a Gentzen system G3lp for the logic of proofs LP. This makes it possible to study
prehistoric relations directly in LP and leads to a negative answer on
Yu's conjecture that prehistoric cycles are sufficient for
self-referential S4 theorems. It also leads to an expanded definition of
self-referentiality considering all inputs used to construct
justification terms. With that expanded definition of
self-referentiality prehistoric cycles are \emph{sufficient} for
self-referential theorems in S4, which is the main result of this paper.

Given this expansion, the question goes back to the other direction.
That is, are prehistoric cycles also necessary for the expanded
definition of self-referentiality? Unfortunately this question is not
easy to answer, as already transferring the definitions of inputs to the
original Hilbert style calculus poses problems. A more detailed
discussion of Skolem style functions and their role in LP realizations
will hopefully help to clear this up. It is possible that the definition
of input variables relative to a subformula occurrence and the machinery
used to work with input variables in~\cite{studer} already provides a part of
the answer.

Yu~\cite{yu2014} expanded his result to modal logics T and K4 and their
justification counterparts.
Another open question is whether the same generalization can be done
with the results of this paper. That is, if there are Gentzen style
systems without structural rules for T and K4 together with a consistent
definition of term correspondence and prehistoric relations and a
translation to some variant of G3s.

\par\bigskip
\noindent
\textbf{Acknowledgments}

\noindent
This work was supported by the  grant 200021\_165549 of the Swiss National Science Foundation.


\begin{thebibliography}{10}

\bibitem{artemov1995}
Sergei Artemov.
\newblock Operational modal logic.
\newblock Technical Report MSI 95--29, Cornell University, 1995.

\bibitem{artemov2001}
Sergei Artemov.
\newblock Explicit provability and constructive semantics.
\newblock {\em Bulletin of Symbolic Logic}, 7(1):1--36, 2001.

\bibitem{artemov2008}
Sergei Artemov.
\newblock The logic of justification.
\newblock {\em The Review of Symbolic Logic}, 1(4):477--513, 2008.

\bibitem{brezhnev2001}
Vladimir Brezhnev.
\newblock On the logic of proofs.
\newblock In Kristina Striegnitz, editor, {\em Proceedings of the sixth
  {ESSLLI} Student Session, 13th {E}uropean {S}ummer {S}chool in {L}ogic,
  {L}anguage and {I}nformation ({ESSLLI'}01)}, pages 35--46. {FoLLI}, 2001.

\bibitem{brezhnev2006}
Vladimir Brezhnev and Roman Kuznets.
\newblock Making knowledge explicit: How hard it is.
\newblock {\em Theoretical Computer Science}, 357(1--3):23--34, 2006.

\bibitem{BucKuzStu14Realizing}
Samuel Bucheli, Roman Kuznets, and Thomas Studer.
\newblock Realizing public announcements by justifications.
\newblock {\em Journal of Computer and System Sciences}, 80(6):1046--1066,
  2014.

\bibitem{fitting2005}
Melvin Fitting.
\newblock The logic of proofs, semantically.
\newblock {\em Annals of Pure and Applied Logic}, 132(1):1--25, 2005.

\bibitem{fitting2009}
Melvin Fitting.
\newblock Realizations and {LP}.
\newblock {\em Annals of Pure and Applied Logic}, 161(3):368--387, 2009.

\bibitem{Fit11SynLib}
Melvin Fitting.
\newblock The realization theorem for~{$\mathsf{S5}$}: A simple, constructive
  proof.
\newblock In Johan van Benthem, Amitabha Gupta, and Eric Pacuit, editors, {\em
  Games, Norms and Reasons: Logic at the Crossroads}, volume 353 of {\em
  Synthese Library}, chapter~4, pages 61--76. Springer, 2011.

\bibitem{Fit13JLC}
Melvin Fitting.
\newblock Realization using the model existence theorem.
\newblock {\em Journal of Logic and Computation}, 26(1):213--234, 2016.

\bibitem{GoeKuz12APAL}
Remo Goetschi and Roman Kuznets.
\newblock Realization for justification logics via nested sequents: Modularity
  through embedding.
\newblock {\em Annals of Pure and Applied Logic}, 163(9):1271--1298, 2012.

\bibitem{kuznets2010}
Roman Kuznets.
\newblock Self-referential justifications in epistemic logic.
\newblock {\em Theory of Computing Systems}, 46(4):636--661, 2010.

\bibitem{studer}
Roman Kuznets and Thomas Studer.
\newblock {\em Logics of Proofs and Justifications}.
\newblock College Publications, in preparation.

\bibitem{pulver2010}
Cornelia Pulver.
\newblock Self-referentiality in contraction-free fragments of modal
  logic~{S4}.
\newblock Master's thesis, Institute of Computer Science and Applied
  Mathematics, University of Bern, 2010.

\bibitem{troelstra2000}
Anne~Sjerp Troelstra and Helmut Schwichtenberg.
\newblock {\em Basic Proof Theory}.
\newblock Cambridge University Press, second edition, 2000.

\bibitem{yu2010}
Junhua Yu.
\newblock Prehistoric phenomena and self-referentiality.
\newblock In Farid Ablayev and Ernst~W. Mayr, editors, {\em Computer Science
  --- Theory and Applications, 5th~International {C}omputer {S}cience Symposium
  in {R}ussia, {CSR~2010}, {K}azan, {R}ussia, {J}une 16--20, 2010,
  Proceedings}, volume 6072 of {\em Lecture Notes in Computer Science}, pages
  384--396. Springer, 2010.

\bibitem{yu2014}
Junhua Yu.
\newblock Prehistoric graph in modal derivations and self-referentiality.
\newblock {\em Theory of Computing Systems}, 54(2):190--210, 2014.

\bibitem{yu2017}
Junhua Yu.
\newblock On non-self-referential fragments of modal logics.
\newblock {\em Annals of Pure and Applied Logic}, 168(4):776--803, 2017.

\end{thebibliography}

\end{document}